\def\ni{\noindent}
\newtheorem{theorem}{Theorem}[section]
\newtheorem{corollary}[theorem] {Corollary}
\newtheorem{definition}[theorem]{Definition}
\newtheorem{proposition}[theorem]{Proposition}
\title{\bf {\sc Strong Integer Additive Set-valued Graphs: A Creative Review}}
\author{N. K. Sudev\footnote{Corresponding author}}
\affil{\small Department of Mathematics\\ Vidya Academy of Science \& Technology\\Thalakkottukara, Thrissur-680501, Kerala, India.\\E-mail:{\em sudevnk@gmail.com}}
\author{K. A. Germina}
\affil{\small PG \& Research Department of Mathematics\\ Marymatha Arts \& Science College\\Mananthavady, Wayanad-670645, Kerala, India.\\E-mail:{\em srgerminaka@gmail.com}}
\author{K. P. Chithra} 
\affil{\small Naduvath Mana, Nandikkara \\Thrissur-680301, Kerala, India.\\E-email:{\em chithrasudev@gmail.com}}
\date{}
\begin{document}
\maketitle

\begin{abstract}
For a non-empty ground set $X$, finite or infinite, the {\em set-valuation} or {\em set-labeling} of a given graph $G$ is an injective function $f:V(G) \to \mathcal{P}(X)$, where $\mathcal{P}(X)$ is the power set of the set $X$. A set-indexer of a graph $G$ is an injective set-valued function $f:V(G) \to \mathcal{P}(X)$ such that the function $f^{\ast}:E(G)\to \mathcal{P}(X)-\{\emptyset\}$ defined by $f^{\ast}(uv) = f(u ){\ast} f(v)$ for every $uv{\in} E(G)$ is also injective., where $\ast$ is a binary operation on sets. An integer additive set-indexer is defined as an injective function $f:V(G)\to \mathcal{P}({\mathbb{N}_0})$ such that the induced function $g_f:E(G) \to \mathcal{P}(\mathbb{N}_0)$ defined by $g_f (uv) = f(u)+ f(v)$ is also injective, where $\mathbb{N}_0$ is the set of all non-negative integers and $\mathcal{P}(\mathbb{N}_0)$ is its power set. An IASI $f$ is said to be a strong IASI if $|f^+(uv)|=|f(u)|\,|f(v)|$ for every pair of adjacent vertices $u,v$ in $G$. In this paper, we critically and creatively review the concepts and properties of strong integer additive set-valued graphs.
\end{abstract}
\textbf{Key words}: Integer additive set-labelings, integer additive set-indexers, strong integer additive set-indexers, strongly uniform integer additive set-indexers.

\noindent \textbf{AMS Subject Classification : 05C78} 

\section{Preliminaries}

\subsection{Introduction to Set-Valued Graphs}

For all  terms and definitions, not defined specifically in this paper, we refer to \cite{FH}. For more about graph classes, we further refer to \cite{BLS} and \cite{JAG}. Unless mentioned otherwise, all graphs considered here are simple, finite and have no isolated vertices.

The researches on graph labeling problems commenced with the introduction of the concept of number valuations of graphs in \cite{AR1}. Since then, the studies on graph labeling have contributed significantly to the researches in graph theory and associated filelds. Graph labeling problems have numerous theoretical and practical applications. Many types of graph labelings are surveyed and listed in \cite{JAG}.

Motivated from various problems related to social interactions and social networks, in \cite{BDA1}, Acharya introduced the notion of set-valuation of graphs analogous to the number valuations of graphs. For a non-empty ground set $X$, finite or infinite, the {\em set-valuation} or {\em set-labeling} of a given graph $G$ is an injective function $f:V(G) \to \mathcal{P}(X)$, where $\mathcal{P}(X)$ is the power set of the set $X$. 

Also, Acharya defined a {\em set-indexer} of a graph $G$ as an injective set-valued function $f:V(G) \to \mathcal{P}(X)$ such that the function $f^{\ast}:E(G)\to \mathcal{P}(X)-\{\emptyset\}$ defined by $f^{\ast}(uv) = f(u ){\ast} f(v)$ for every $uv{\in} E(G)$ is also injective, where $\mathcal{P}(X)$ is the set of all subsets of $X$ and $\ast$ is a binary operation on sets. 

Taking the symmetric difference of two sets as the operation  between two set-labels of the vertices of $G$, the following theorem was proved in \cite{BDA1}.

\begin{theorem}\label{T-SIG}
\cite{BDA1} Every graph has a set-indexer.
\end{theorem}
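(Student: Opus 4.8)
The plan is to exhibit an explicit set-indexer for an arbitrary simple finite graph $G$, using the symmetric difference $\triangle$ as the binary operation $\ast$. The guiding idea is that the symmetric difference of two distinct singletons is exactly the two-element set consisting of their elements, and that for a simple graph the edges are in bijection with unordered pairs of adjacent vertices; hence labelling the vertices by distinct singletons automatically renders all induced edge labels distinct.

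Concretely, first I would enumerate the vertices as $V(G)=\{v_1,v_2,\ldots,v_n\}$ and take the ground set to be the index set $X=\{1,2,\ldots,n\}$. Then I would define $f\colon V(G)\to\mathcal{P}(X)$ by $f(v_i)=\{i\}$. Since distinct vertices receive distinct indices, $f$ is injective, so it is a legitimate set-valuation.

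Next I would verify the two defining conditions of a set-indexer for the induced map $f^{\ast}(uv)=f(u)\,\triangle\,f(v)$. For an edge $v_iv_j$ with $i\neq j$ one has $f^{\ast}(v_iv_j)=\{i\}\triangle\{j\}=\{i,j\}$, a two-element subset of $X$; in particular $f^{\ast}(uv)\neq\emptyset$ for every edge, so $f^{\ast}$ indeed takes values in $\mathcal{P}(X)-\{\emptyset\}$. Injectivity then follows because in a simple graph an edge is uniquely determined by its unordered pair of endpoints: if $f^{\ast}(v_iv_j)=f^{\ast}(v_kv_\ell)$, then $\{i,j\}=\{k,\ell\}$, forcing the two edges to coincide.

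The construction is direct, so there is no genuinely hard step; the one point that must be handled with care is the injectivity of $f^{\ast}$, where it is essential that $G$ be simple and loopless, so that the assignment ``edge $\mapsto$ unordered pair of endpoints'' is itself injective and never yields a repeated or empty label. Should one wish to treat binary operations other than symmetric difference, this simple singleton labelling may fail and a more elaborate ground set would be needed; but for $\ast=\triangle$ the argument above suffices.
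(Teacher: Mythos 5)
Your proof is correct: with $X=\{1,2,\ldots,n\}$, the singleton labelling $f(v_i)=\{i\}$ is injective, and under $\ast=\triangle$ every edge $v_iv_j$ receives the nonempty two-element label $\{i,j\}$, which determines the edge uniquely in a simple graph, so $f^{\ast}$ is injective as required. The paper itself gives no proof of this statement — it merely cites Acharya and records that the operation used there is symmetric difference — and your singleton construction is precisely the standard argument behind that citation, so your approach coincides with the paper's.
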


\subsection{Integer Additive Set-Valued Graphs}

\begin{definition}\label{D-IASL}{\rm
\cite{GA} Let $\mathbb{N}_0$ denote the set of all non-negative integers and $\mathcal{P}(\mathbb{N}_0)$ be its power set. An {\em integer additive set-labeling} (IASL, in short) of a graph $G$ is defined as an injective function $f:V(G)\to \mathcal{P}(\mathbb{N}_0)$ which induces a function $f^+:E(G) \to \mathcal{P}(\mathbb{N}_0)$ such that $f^+ (uv) = f(u)+ f(v),~ uv\in E(G)$. A Graph which admits an IASL is called an {\em integer additive set-labeled graph} (IASL-graph).}
\end{definition}

During a personal communication with the second author, Acharya introduced the notion of integer additive set-indexers of graphs, using the concept of sum sets of two sets of non-negative integers and the definition was first appeared in \cite{GA} as follows. 

\begin{definition}\label{D-IASI}{\rm
\cite{GA} An integer additive set-labeling $f:V(G)\to \mathcal{P}(\mathbb{N}_0)$ of a graph $G$ is said to be an {\em integer additive set-indexer} (IASI) if the induced function $f^+:E(G) \to \mathcal{P}(\mathbb{N}_0)$ defined by $f^+ (uv) = f(u)+ f(v)$ is also injective. A Graph which admits an IASI is called an {\em integer additive set-indexed graph} (IASI-graph).}
\end{definition}

An IASL (or IASI) is said to be {\em $k$-uniform} if $|f^+(e)| = k$ for all $e\in E(G)$. That is, a connected graph $G$ is said to have a $k$-uniform IASL (or IASI) if all of its edges have the same set-indexing number $k$. If $G$ is a graph which admits a $k$-uniform IASI and $V(G)$ is $l$-uniformly set-indexed, then $G$ is said to have a {\em $(k,l)$-completely uniform IASI} or simply a {\em completely uniform IASI}.

\begin{theorem}
{\rm \cite{GS0}} Every graph $G$ admits an integer additive set-labeling (integer additive set-indexer).
\end{theorem}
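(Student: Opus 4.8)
The plan is to give an explicit construction, since the statement is a constructive existence claim and, by the standing assumption, $G$ is finite; write $n = |V(G)|$ and list its vertices as $v_1, \dots, v_n$. The simplest set-labels to control are singletons, because if every vertex receives a one-element subset of $\mathbb{N}_0$, then every induced edge label $f^+(v_iv_j) = f(v_i) + f(v_j)$ is again a singleton, and the whole problem collapses to a purely numerical one. First I would choose integers $a_1, \dots, a_n$ and set $f(v_i) = \{a_i\}$. Injectivity of $f$ is then equivalent to the $a_i$ being pairwise distinct, which is trivial to arrange, and each edge label is automatically nonempty.

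The only genuine requirement that remains is that the induced map $f^+$ be injective, i.e.\ that distinct edges receive distinct labels. For singleton labels this amounts to demanding that the sums $a_i + a_j$ be pairwise distinct over distinct unordered pairs $\{i,j\}$. I would secure this by taking the $a_i$ to form a finite Sidon set (a $B_2$ set), in which all pairwise sums are distinct. The explicit choice $a_i = 2^{\,i-1}$ does the job, since the uniqueness of binary representations forces $2^{\,i-1} + 2^{\,j-1} = 2^{\,k-1} + 2^{\,l-1}$ to imply $\{i,j\} = \{k,l\}$.

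With this labeling in hand the verification is routine: $f$ is injective because distinct powers of two are distinct, and $f^+$ is injective because for any two distinct edges $v_iv_j$ and $v_kv_l$, equality of their singleton labels would force $\{i,j\} = \{k,l\}$, contradicting the distinctness of the edges. Hence $f$ is an integer additive set-indexer and $G$ admits one. The main — and essentially the only — obstacle is the edge-injectivity of $f^+$; the step worth isolating is the observation that singleton vertex-labels reduce the entire question to producing a finite Sidon set of the required size, whose existence for every $n$ is classical and is here made completely explicit by the powers of two. Everything else is immediate.
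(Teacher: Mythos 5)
Your construction is correct and is essentially the standard argument: the paper itself states this theorem without proof (citing \cite{GS0}), and the proof there proceeds exactly as you do, by assigning distinct singleton set-labels whose pairwise sums are distinct so that both $f$ and the induced map $f^+$ are injective. Your explicit choice $f(v_i)=\{2^{\,i-1}\}$, justified via uniqueness of binary representations, is a clean instance of that construction, so there is nothing to add.
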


If either $f(u)$ or $f(v)$ is countably infinite, then clearly their sumset will also be countably infinite and hence the study of the cardinality of the sum set $f(u)+f(v)$ becomes trivial. Hence, we restrict our discussion to finite sets of non-negative integers. We denote the cardinality of a set $A$ by $|A|$. The cardinality of the set-label of an element (vertex or edge) of a graph $G$ is called the {\em set-indexing number} of that element. If the set-labels of all vertices of $G$ have the same cardinality, then the vertex set $V(G)$ is said to be {\em uniformly set-indexed}.

Certain studies about integer additive set-indexed graphs have been done in \cite{TMKA}, \cite{GS1}, \cite{GS2} and \cite{GS0}.

Analogous to Theorem \ref{T-SIG}, we have proved the following theorem.

\begin{theorem}\label{T-IASI1}
{\rm \cite{GS0}} Every graph has an integer additive set-labeling (or an integer additive set-indexer).
\end{theorem}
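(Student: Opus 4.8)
The plan is to exhibit an explicit integer additive set-indexer for an arbitrary simple finite graph $G$, so that the theorem follows constructively. Writing $V(G)=\{v_1,v_2,\dots,v_n\}$, the idea is to use only singleton set-labels: I would define $f(v_i)=\{2^{\,i-1}\}$ for each $i$. Restricting to singletons is deliberate, because then the entire question collapses to a statement about ordinary integer labels. Indeed, with singletons $f$ is injective precisely when the chosen integers are distinct, and the induced edge map $f^+$ is injective precisely when the pairwise sums assigned to edges are distinct.

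First I would check injectivity of $f$ on the vertices. Since $2^{0},2^{1},\dots,2^{\,n-1}$ are pairwise distinct non-negative integers, the singletons $\{2^{\,i-1}\}$ are distinct subsets of $\mathbb{N}_0$, so $f:V(G)\to\mathcal{P}(\mathbb{N}_0)$ is injective. Next I would verify that the induced map $f^+$ is injective. For an edge $v_iv_j$ we have
\[
f^+(v_iv_j)=f(v_i)+f(v_j)=\{\,2^{\,i-1}+2^{\,j-1}\,\},
\]
a singleton set. The key point is that the integer $2^{\,i-1}+2^{\,j-1}$ (with $i\neq j$) has a binary expansion consisting of exactly two $1$'s, located at positions $i-1$ and $j-1$. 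By uniqueness of binary representation, the unordered pair $\{i,j\}$ is recovered from this integer, so distinct edges yield distinct sums, hence distinct singleton edge-labels. Therefore $f^+$ is injective, and $f$ is an IASI of $G$.

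The construction is entirely elementary, so there is no real obstacle beyond selecting a suitable label set; the only thing that must be got right is the choice guaranteeing distinct edge sums, and powers of $2$ achieve this cleanly via uniqueness of binary expansions. Any Sidon-type set (a set in which all pairwise sums are distinct) would serve equally well. Note that the finiteness of $G$, assumed throughout, is what lets me index the vertices by $1,\dots,n$ and stay within finite subsets of $\mathbb{N}_0$. For the same reason I would avoid appealing to the symmetric-difference set-indexer of Theorem~\ref{T-SIG}, since that is built for a different binary operation and does not directly produce a labeling injective under the sumset operation.
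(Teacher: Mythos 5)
Your construction is correct: the singleton labels $f(v_i)=\{2^{\,i-1}\}$ are distinct, and since $2^{\,i-1}+2^{\,j-1}$ with $i\neq j$ determines the unordered pair $\{i,j\}$ by uniqueness of binary expansion, the induced edge labels are distinct singletons, so $f$ is an IASI (and a fortiori an IASL). The paper itself states Theorem~\ref{T-IASI1} without proof, merely citing \cite{GS0}, and your Sidon-set argument with powers of two is exactly the standard constructive route taken there; your closing remark that Theorem~\ref{T-SIG} cannot be invoked, because the symmetric-difference indexer need not be injective under the sumset operation, is also well taken.
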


\section{Strong Integer Additive Set-Labeled Graphs}

The cardinality of the sum set of two non-empty finite sets $A$ and $B$ is always less than or equal to the product $|A|\,|B|$. The study regarding the characteristics of graphs in which the set-indexing number of every edge is the product of the set-indexing numbers of its end vertices  arises much interest. Hence, we defined 

\begin{definition}{\rm  
\cite{GS2} If a graph $G$ has a set-indexer $f$ such that $|f^+(uv)|=|f(u)+f(v)|=|f(u)|\,|f(v)|$ for all vertices $u$ and $v$ of $G$, then $f$ is said to be a {\em strong IASI} of $G$. A graph which admits a strong IASI is called a {\em strong IASI-graph}.} 
\end{definition} 

Let $A$ be a non-empty finite set of non-negative integers. The {\em difference set} of $A$, denoted by $D_A$, is the set defined by $D_A=\{|a-b|:a,b\in A\}$. Now, we necessary and sufficient condition for a graph $G$ to admit a strong IASI, in terms of the difference sets of the set-labels of the vertices of a given graph, is established in \cite{GS2} as follows.

\begin{theorem}\label{T-SIASI1}
{\rm \cite{GS2}} A graph $G$ admits a strong IASI if and only if the difference sets of the set-labels of any two adjacent vertices in $G$ are disjoint.
\end{theorem}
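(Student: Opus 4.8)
The plan is to reduce the theorem to a purely set-theoretic statement about sumsets, handled edge by edge. The two conditions in the theorem are both phrased pair-by-pair over adjacent vertices, so I would fix an IASI $f$ of $G$ and recall that, by definition, $f$ is a \emph{strong} IASI precisely when $|f(u)+f(v)|=|f(u)|\,|f(v)|$ holds on every edge $uv$. It therefore suffices to prove the following local equivalence for any two nonempty finite sets $A,B\subseteq\mathbb{N}_0$:
$$|A+B|=|A|\,|B|\iff D_A\text{ and }D_B\text{ share no positive element.}$$
Here I would note at the outset that every difference set contains $0=|a-a|$, so the word ``disjoint'' in the statement must be read as ``disjoint apart from this forced common element'' (equivalently, one may take difference sets to consist only of positive differences); without this reading the condition is vacuous.

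For the key equivalence I would argue through the collision structure of the sumset. Since $A+B=\{a+b:a\in A,\ b\in B\}$, the bound $|A+B|\le|A|\,|B|$ is attained exactly when the $|A|\,|B|$ sums $a+b$ are pairwise distinct; hence $|A+B|<|A|\,|B|$ iff there are distinct pairs $(a_1,b_1)\neq(a_2,b_2)$ with $a_1+b_1=a_2+b_2$. Rearranging gives $a_1-a_2=b_2-b_1$, and if either side vanished both would, forcing $(a_1,b_1)=(a_2,b_2)$; so both differences are nonzero and $|a_1-a_2|=|b_1-b_2|$ is a common positive element of $D_A$ and $D_B$. Conversely, a common positive value $d\in D_A\cap D_B$ supplies $a_1,a_2\in A$ and $b_1,b_2\in B$ with $a_1-a_2=d=b_1-b_2$, whence $a_1+b_2=a_2+b_1$ is a sum collision between the distinct pairs $(a_1,b_2)$ and $(a_2,b_1)$, giving $|A+B|<|A|\,|B|$. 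This establishes the contrapositive in both directions and hence the equivalence.

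To finish, I would apply the local equivalence to each edge of $G$: the labeling $f$ is strong iff $|f(u)+f(v)|=|f(u)|\,|f(v)|$ for every adjacent pair $u,v$, iff $D_{f(u)}$ and $D_{f(v)}$ share no positive element for every such pair, which is exactly the stated disjointness condition. The bookkeeping that assembles the per-edge statements into the global one is routine. The genuine content, and the step I expect to demand the most care, is the additive-combinatorics lemma identifying a sumset collision with a shared nonzero difference; the one subtlety there is the trivial difference $0$, which lies in every difference set and must be excluded from the notion of disjointness for the theorem to be non-vacuous.
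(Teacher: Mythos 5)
Your proposal is correct and takes essentially the same route as the source: the paper itself states this theorem without proof (it is a survey, citing \cite{GS2}), and the argument there is precisely your edge-local collision analysis, identifying failure of $|A+B|=|A|\,|B|$ with a shared nonzero difference via $a_1+b_1=a_2+b_2 \Leftrightarrow a_1-a_2=b_2-b_1$. Your caveat that $0$ lies in every difference set as literally defined ($D_A=\{|a-b|:a,b\in A\}$), so that ``disjoint'' must be read on positive differences, is a genuine and necessary repair of the paper's loose definition rather than a defect in your proof.
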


Let us use the notation $A<B$ to indicate that the sets $A$ and $B$ are mutually disjoint sets. By the sequence $A_1<A_2<A_3<\ldots <A_n$, we mean that the given sets are pairwise disjoint. The relation $<$ is called the {\em difference relation} of $G$ and the {\em length} of a sequence of difference sets is the number of difference sets in that sequence. Then, we have

\begin{theorem}\label{T-SIASI-Gg} 
{\rm \cite{GS2}} Let $G$ be a connected graph on $n$ vertices. Then, $G$ admits a strong IASI if and only if there exists a finite sequence of difference sets $D_1<D_2<D_3< \ldots <D_m;~ m\le n$, where $D_i$ is the difference set of the set-label a vertex $v_i$ of $G$. 
\end{theorem}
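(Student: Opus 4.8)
The plan is to run everything through Theorem~\ref{T-SIASI1}, which already tells us that a strong IASI is exactly a set-labeling in which the difference sets of any two \emph{adjacent} vertices are disjoint. Throughout I write $D_i$ for the difference set $D_{f(v_i)}$ associated with a vertex $v_i$ under a labeling $f$, and I keep in mind that the notation $D_1<D_2<\cdots<D_m$ records that these sets are \emph{pairwise} disjoint, a condition strictly stronger than disjointness along edges. Since the statement is an equivalence, I would prove the two implications separately, and I expect them to be of quite different difficulty.

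The reverse implication is the short one. Suppose the vertices of $G$ can be labeled so that the difference sets occurring among them form a pairwise disjoint sequence $D_1<D_2<\cdots<D_m$ with $m\le n$. Pairwise disjointness means that no two vertices can carry the same nonempty difference set, so for adjacent $u$ and $v$ the sets $D_{f(u)}$ and $D_{f(v)}$ are either two distinct members of the family or both empty; in either case they are disjoint. As this holds for every edge, Theorem~\ref{T-SIASI1} yields a strong IASI at once.

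For the forward implication I would not try to read the sequence off an arbitrary strong IASI, because the difference sets of \emph{non}-adjacent vertices (for instance two leaves with a common neighbour) may well overlap; instead I would construct a labeling realizing pairwise disjointness, exploiting connectivity. Fix a spanning tree of $G$ and order the vertices $v_1,v_2,\ldots,v_n$ so that each $v_i$ with $i\ge 2$ has a neighbour among $v_1,\ldots,v_{i-1}$; such a breadth-first ordering exists precisely because $G$ is connected. I would then assign set-labels inductively: having chosen $A_1,\ldots,A_{i-1}$, choose $A_i$ so that its difference set $D_i$ avoids every value already present in $D_1\cup\cdots\cup D_{i-1}$, while keeping $f$ and the induced $f^{+}$ injective. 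The sets $D_1,\ldots,D_n$ are then pairwise disjoint, and deleting repetitions (if any) leaves a sequence of length $m\le n$, as required; since adjacent difference sets are in particular disjoint, Theorem~\ref{T-SIASI1} confirms that the construction is indeed a strong IASI.

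The main obstacle is exactly this inductive choice: one must guarantee at every stage that a set-label exists whose difference set dodges a prescribed finite set of forbidden values and simultaneously respects injectivity of both $f$ and $f^{+}$. I would dispatch it with a scaling-and-translation argument, observing that only finitely many constraints are active at each step while $\mathbb{N}_0$ is infinite: scaling a set by a large factor pushes its differences above every forbidden value, and translating it into a fresh, high range of integers keeps all vertex-labels and all edge sumsets mutually distinct. As a by-product this shows that the right-hand condition is attainable for every connected graph, which is consistent with the theorem being an honest reformulation of "strong IASI-graph" rather than a restriction on $G$.
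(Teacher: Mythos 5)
Your forward (construction) half is sound, but your reverse implication contains a genuine gap. You claim that pairwise disjointness of the family $D_1<D_2<\cdots<D_m$ ``means that no two vertices can carry the same nonempty difference set,'' and conclude that for adjacent $u,v$ the sets $D_{f(u)}$ and $D_{f(v)}$ are ``either two distinct members of the family or both empty.'' This overlooks a third case: both adjacent vertices may carry the \emph{same} nonempty member of the family. Distinct set-labels can share a difference set --- for instance $f(u)=\{1,2\}$ and $f(v)=\{5,6\}$ both have difference set $\{1\}$ --- and then the family of distinct difference sets (here a single set, so $m=1\le n$ and pairwise disjointness holds vacuously) satisfies your reading of the right-hand condition, while $D_{f(u)}\cap D_{f(v)}=\{1\}\neq\emptyset$; by Theorem~\ref{T-SIASI1} such a labeling is \emph{not} a strong IASI, and indeed $|f(u)+f(v)|=|\{6,7,8\}|=3\neq 4=|f(u)|\,|f(v)|$. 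Your own forward direction, where you ``delete repetitions,'' concedes that several vertices may share one difference set, which is exactly the configuration your reverse argument fails to exclude. To repair it you would have to demand that adjacent vertices are assigned \emph{distinct} members of the pairwise disjoint family (in effect a proper colouring by difference sets), or else re-label the graph --- at which point the implication is no longer being deduced from the stated condition.

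For comparison: the paper, being a review, states this theorem without proof (citing [GS2]); its intended route, visible in the discussion surrounding Theorem~\ref{T-SIASI1} and the nourishing-number material, is to read the chains off a \emph{given} strong IASI --- adjacent vertices have disjoint difference sets, and the relation $<$ then yields maximal pairwise disjoint sequences (along cliques) of length at most $n$ --- with the converse flowing back through Theorem~\ref{T-SIASI1}. Your forward direction instead builds a fresh labeling from scratch and never uses the assumed strong IASI; nor does it use connectivity, since the BFS ordering plays no role once you force \emph{all} difference sets to be pairwise disjoint. That scaling-and-translation construction is correct, and, as you yourself observe, it establishes the right-hand condition (and the existence of a strong IASI) for every connected graph unconditionally; the cleanest salvage of your write-up is therefore to present that construction once and deduce the biconditional from it, discarding the flawed standalone reverse argument.
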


By Theorem \ref{T-SIASI1}, if $G$ admits a strong IASI $f$, then for any two adjacent vertices $u$ and $v$ of $G$, we have $D_{f(u)}<D_{f(v)}$. The relation $<$ forms one or more sequence of difference sets for $G$. Then, we introduce the following notion.

\begin{definition}{\rm
\cite{GS12} The {\em nourishing number} of a set-labeled graph is the minimum length of the maximal sequence of difference sets of the set-labels of the vertices in $G$. The nourishing number of a graph $G$ is denoted by $\varkappa(G)$.}
\end{definition}

In view of the above notions, we proposed a necessary and sufficient condition for a complete graph $G$ as follows.

\begin{theorem}\label{T-SIASI-Kn}
\cite{GS2} A complete graph $K_n$ admits a strong IASL (or IASI) if and only if the difference sets of the set-labels of all vertices of $G$ are pairwise disjoint.
\end{theorem}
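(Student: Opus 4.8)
The plan is to obtain this result as an immediate specialization of Theorem \ref{T-SIASI1} to the complete graph, the crucial observation being that in $K_n$ every pair of distinct vertices is adjacent, so that the adjacency hypothesis of Theorem \ref{T-SIASI1} collapses into a statement about \emph{all} pairs of vertices.

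For the necessity, I would assume that $K_n$ admits a strong IASI $f$. By Theorem \ref{T-SIASI1}, the difference sets of the set-labels of any two adjacent vertices are disjoint. Since $K_n$ is complete, any two distinct vertices $v_i$ and $v_j$ are adjacent, whence $D_{f(v_i)}\cap D_{f(v_j)}=\emptyset$ for all $i\neq j$; that is, the difference sets of the set-labels of all vertices of $K_n$ are pairwise disjoint. For the sufficiency, I would run the argument in reverse: if the difference sets $D_{f(v_1)},D_{f(v_2)},\ldots,D_{f(v_n)}$ are pairwise disjoint, then in particular every \emph{adjacent} pair of vertices has disjoint difference sets (again because completeness makes every pair adjacent), so the converse implication of Theorem \ref{T-SIASI1} yields that $f$ is a strong IASI of $K_n$.

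There is essentially no analytical obstacle here, since the statement is a corollary of Theorem \ref{T-SIASI1}; the only point requiring care is the bookkeeping that identifies the phrase ``pairwise disjoint over all vertices'' with the phrase ``disjoint over adjacent vertices'' precisely under the completeness of $K_n$. If one prefers to avoid invoking Theorem \ref{T-SIASI1} directly, I would instead derive the claim from Theorem \ref{T-SIASI-Gg}: for $K_n$ the maximal sequence of pairwise disjoint difference sets must involve every vertex simultaneously, so completeness forces its length to be exactly $n$, which is equivalent to all $n$ difference sets being mutually disjoint.
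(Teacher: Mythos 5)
Your proposal is correct and follows essentially the same route the paper intends: the theorem is exactly the specialization of Theorem \ref{T-SIASI1} to $K_n$, where completeness turns ``adjacent pairs'' into ``all pairs,'' as the paper itself signals in the remark immediately following the statement that $D_1<D_2<\cdots<D_n$ for the vertices of $K_n$. No gap to report; the alternative derivation via Theorem \ref{T-SIASI-Gg} is also consistent with how the paper connects the result to sequences of pairwise disjoint difference sets.
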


The following theorem establishes the hereditary nature of the existence of a strong IASL (or a strong IASI). 

\begin{theorem}\label{T-SIASI-SG}
Any subgraph of a strong IASL-graph also admits a (induced) strong IASI.
\end{theorem}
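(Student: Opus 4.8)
The plan is to take a strong IASI $f$ on the ambient graph $G$ and show that its restriction to any subgraph $H$ is again a strong IASI, so that the required labeling on $H$ is simply the induced one. Since every vertex of $H$ is a vertex of $G$, I would set $g = f|_{V(H)}$ and verify the three defining requirements in turn: that $g$ is injective as a vertex set-labeling, that the induced edge function $g^+$ is injective, and that $g$ satisfies the strong condition $|g^+(uv)| = |g(u)|\,|g(v)|$ on every edge of $H$.

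The first two requirements are immediate from the fact that a restriction of an injective map is injective. Because $V(H) \subseteq V(G)$, the injectivity of $f$ on $V(G)$ forces injectivity of $g$ on $V(H)$. Likewise, since $H$ is a subgraph we have $E(H) \subseteq E(G)$, and for each $uv \in E(H)$ the value $g^+(uv) = g(u)+g(v) = f(u)+f(v) = f^+(uv)$ agrees with $f^+$; thus $g^+$ is nothing but $f^+$ restricted to $E(H)$, and it is injective because $f^+$ is. Hence $g$ is at least an IASI of $H$.

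For the strong property, I would appeal to Theorem \ref{T-SIASI1}, which characterizes strong IASIs by the disjointness of the difference sets of adjacent vertices. Any edge $uv \in E(H)$ is also an edge of $G$, so $u$ and $v$ are adjacent in $G$; since $f$ is strong on $G$, Theorem \ref{T-SIASI1} guarantees that the difference sets $D_{f(u)}$ and $D_{f(v)}$ are disjoint. This is exactly the disjointness condition that Theorem \ref{T-SIASI1} demands for the adjacent pair $u,v$ in $H$ under the labeling $g$. As this holds for every edge of $H$, the characterization certifies that $g$ is a strong IASI of $H$. Since the argument is a pure restriction, there is no genuine obstacle; the only point requiring care is the observation that adjacency in $H$ implies adjacency in $G$ (by the definition of a subgraph), which is what lets each local difference-set condition inherited from $G$ carry over unchanged to $H$.
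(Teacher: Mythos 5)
Your proposal is correct and follows the same restriction argument that the paper relies on (the paper states Theorem~\ref{T-SIASI-SG} without an explicit proof, but this is the standard argument behind it): since $V(H)\subseteq V(G)$ and $E(H)\subseteq E(G)$, injectivity of $f$ and $f^+$ passes to the restrictions, and the strong condition holds edgewise. Your detour through Theorem~\ref{T-SIASI1} is valid but unnecessary: for each $uv\in E(H)$ one can verify directly that $|g^+(uv)|=|f^+(uv)|=|f(u)|\,|f(v)|=|g(u)|\,|g(v)|$, since $uv$ is also an edge of $G$.
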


That is, for a vertex $v_i$ of a complete graph $K_n$, if $D_i$ is the difference set of $f(v_i)$, then we have $D_1<D_2<D_3<\ldots<D_n$. Therefore, 

\begin{proposition}
{\rm \cite{GS12}} The nourishing number of a complete graph $K_n$ is $n$.
\end{proposition}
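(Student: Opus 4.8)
The plan is to obtain the value directly from the characterization of strong IASIs on complete graphs. First I would invoke Theorem \ref{T-SIASI-Kn}, which tells us that a complete graph $K_n$ admits a strong IASI $f$ if and only if the difference sets $D_1, D_2, \ldots, D_n$ of the set-labels of its $n$ vertices are pairwise disjoint. Since every pair of vertices of $K_n$ is adjacent, this pairwise disjointness is forced for \emph{any} strong IASI; in particular no two of the $D_i$ can coincide, so they are $n$ distinct, mutually disjoint sets.

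Next I would translate this into the language of the difference relation $<$. Pairwise disjointness of $D_1, \ldots, D_n$ is precisely the assertion that $D_1 < D_2 < D_3 < \cdots < D_n$ is a valid sequence of difference sets, as already noted in the paragraph preceding the statement. This sequence has length $n$; moreover, because $K_n$ has only $n$ vertices, no sequence of difference sets can contain more than $n$ terms, so it cannot be extended. Every difference set of $K_n$ therefore lies in this single chain, which is consequently the unique maximal sequence, of length exactly $n$.

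Finally, since the maximal sequence has length $n$ for every strong IASI of $K_n$, minimizing this length over all admissible strong IASIs leaves the value unchanged, and hence $\varkappa(K_n) = n$ by the definition of the nourishing number.

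I do not expect a genuine obstacle: the argument is essentially bookkeeping layered on Theorem \ref{T-SIASI-Kn}. The only point needing a little care is the lower bound, namely that the maximal sequence cannot be shorter than $n$. This holds because completeness forces all $n$ difference sets to be mutually disjoint and hence to occupy a single chain, so none may be omitted from the maximal sequence; the matching upper bound is immediate from the count of vertices.
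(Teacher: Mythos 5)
Your proposal is correct and follows essentially the same route as the paper, which likewise derives the result directly from Theorem \ref{T-SIASI-Kn}: completeness forces the difference sets $D_1, D_2, \ldots, D_n$ of all $n$ vertex set-labels to be pairwise disjoint, yielding the single maximal chain $D_1 < D_2 < \cdots < D_n$ of length $n$. Your write-up merely makes explicit the bookkeeping (the lower bound via non-omittability and the upper bound via the vertex count) that the paper leaves implicit.
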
 

\ni The following results are on the nourishing number of different graphs.

\begin{proposition}
{\rm \cite{GS12}} The nourishing number of a bipartite graph is $2$.
\end{proposition}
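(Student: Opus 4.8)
The plan is to establish the two inequalities $\varkappa(G)\ge 2$ and $\varkappa(G)\le 2$ for an arbitrary bipartite graph $G$, invoking Theorem~\ref{T-SIASI1} to translate the strong IASI condition into disjointness of difference sets, and then counting the longest pairwise-disjoint (that is, $<$-related) sequence of difference sets. Throughout I read the difference set as the collection of \emph{positive} differences $\{|a-b|:a,b\in A,\ a\ne b\}$, since the trivial difference $0$ is common to every difference set and plays no part in the disjointness relation $<$ that underlies the nourishing number.

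First I would settle the lower bound. Because $G$ has no isolated vertices it contains at least one edge $uv$. For every strong IASI $f$ of $G$, Theorem~\ref{T-SIASI1} gives $D_{f(u)}<D_{f(v)}$, so $D_{f(u)},D_{f(v)}$ already form a pairwise-disjoint sequence of length $2$. Hence the maximal sequence of difference sets has length at least $2$ for \emph{every} strong IASI, and therefore $\varkappa(G)\ge 2$.

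For the upper bound I would exhibit one strong IASI whose longest $<$-chain has length exactly $2$. Fix a bipartition $V(G)=V_1\cup V_2$, enumerate $V_1=\{x_1,x_2,\ldots\}$ and $V_2=\{y_1,y_2,\ldots\}$, and set $f(x_i)=\{3i,\,3i+1\}$ and $f(y_j)=\{3j,\,3j+2\}$. These labels are pairwise distinct (each $V_1$-label is a pair of consecutive integers, each $V_2$-label a pair at distance $2$), so $f$ is injective. For an edge $x_iy_j$ the sumset $f(x_i)+f(y_j)$ equals $\{3i+3j,\,3i+3j+1,\,3i+3j+2,\,3i+3j+3\}$, four distinct values, so $|f^+(x_iy_j)|=4=|f(x_i)|\,|f(y_j)|$ and $f$ is a strong IASI; equivalently $D_{f(x_i)}=\{1\}$ and $D_{f(y_j)}=\{2\}$ are disjoint, as Theorem~\ref{T-SIASI1} demands. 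Every vertex of $V_1$ carries difference set $\{1\}$ and every vertex of $V_2$ carries $\{2\}$, so any two difference sets from the same part coincide (and are not disjoint) while one from each part are disjoint. Thus the longest pairwise-disjoint sequence has length $2$, giving $\varkappa(G)\le 2$, and combined with the lower bound, $\varkappa(G)=2$.

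The step I expect to be the main obstacle is interpreting "minimum length of the maximal sequence" correctly and confirming that the construction is admissible \emph{simultaneously} for every bipartite graph, independent of its internal adjacency pattern: the verification uses only that all edges run between $V_1$ and $V_2$, which is precisely what lets the two parts carry two mutually disjoint, internally constant difference sets without violating Theorem~\ref{T-SIASI1}. One should also be careful to check that no alternative strong IASI can push the maximal chain below $2$, but this is exactly the lower bound and is forced by the presence of an edge.
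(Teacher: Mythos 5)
Your two-inequality strategy is sound, and it is worth noting that the paper itself states this proposition without proof (it is a survey, citing \cite{GS12}); contextually the paper derives it from the viewpoint that the nourishing number equals the clique number (Theorem \ref{T-NN1}), a bipartite graph with at least one edge having $\omega(G)=2$. Your lower bound is correct: any edge $uv$ forces, via the necessity direction of Theorem \ref{T-SIASI1}, the disjoint pair $D_{f(u)}<D_{f(v)}$ under every strong IASI, so $\varkappa(G)\ge 2$. Your reading of the difference sets modulo the trivial difference $0$ is also the right convention, since otherwise no two difference sets are ever disjoint and the whole theory collapses.

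There is, however, a genuine flaw in your upper-bound construction. With $f(x_i)=\{3i,3i+1\}$ and $f(y_j)=\{3j,3j+2\}$ the edge label $f^+(x_iy_j)=\{3(i+j),3(i+j)+1,3(i+j)+2,3(i+j)+3\}$ depends only on $i+j$, so the induced edge function is not injective whenever $G$ contains two edges $x_iy_j$ and $x_ky_l$ with $i+j=k+l$: already in $K_{2,2}$ the edges $x_1y_2$ and $x_2y_1$ receive the identical label $\{9,10,11,12\}$. By the paper's definitions (Definition \ref{D-IASI} together with the definition of a strong IASI) a strong IASI must be a set-\emph{indexer}, i.e., $f^+$ must be injective, so your $f$ is only a strong IAS-labeling and the sentence ``so $f$ is a strong IASI'' fails as stated. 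The repair is easy and preserves every other step of your argument: separate the base points fast enough that the sum determines the edge, e.g.\ $f(x_i)=\{4^i,\,4^i+1\}$ and $f(y_j)=\{2\cdot 4^j,\,2\cdot 4^j+2\}$. Then $f^+(x_iy_j)$ consists of the four consecutive integers starting at $4^i+2\cdot 4^j$, and since the base-$4$ digits of $4^i+2\cdot 4^j$ recover $(i,j)$ uniquely, $f^+$ is injective; the difference sets are still constantly $\{1\}$ on $V_1$ and $\{2\}$ on $V_2$, so the longest pairwise-disjoint sequence of difference sets has length exactly $2$ and $\varkappa(G)\le 2$ follows as you argued.
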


\begin{proposition}
{\rm \cite{GS12}} The nourishing number of a triangle-free graph is $2$.
\end{proposition}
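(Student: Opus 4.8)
The plan is to prove that $\varkappa(G)=2$ by establishing the two bounds $\varkappa(G)\ge 2$ and $\varkappa(G)\le 2$ separately, reading the nourishing number as the smallest possible length of a maximal chain $D_1<D_2<\cdots<D_m$ of difference sets, minimised over all strong IASIs of $G$. Throughout I would rely on the characterisation in Theorem~\ref{T-SIASI1}, which tells us that, for a strong IASI, the disjointness relation $D_{f(u)}<D_{f(v)}$ is forced on exactly the adjacent pairs; thus a chain of pairwise disjoint difference sets is precisely a collection of vertices whose difference sets are mutually disjoint. For the lower bound I would simply note that, since the graphs considered here have no isolated vertices, $G$ contains at least one edge $uv$, and any strong IASI then gives $D_{f(u)}<D_{f(v)}$, so every maximal sequence already contains these two disjoint sets and has length at least $2$. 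This gives $\varkappa(G)\ge 2$ without using the triangle-free hypothesis.

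The substance is the upper bound, and this is where triangle-freeness enters, through the elementary fact that every triple of vertices of a triangle-free graph contains a non-adjacent pair. I would therefore aim to exhibit a single strong IASI $f$ in which \emph{non-adjacent} vertices are forced to have \emph{intersecting} difference sets. Granting this, any three vertices contain a non-adjacent, hence non-disjoint, pair, so no three difference sets can be pairwise disjoint and every maximal chain has length at most $2$; combined with the lower bound this yields $\varkappa(G)=2$.

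To construct such an $f$, I would list the non-edges of $G$ as $e_1,\dots,e_m$, attach to each $e_i$ a marker $M_i$ drawn from a sufficiently fast-growing (Sidon-type) sequence, and set $f(v)=\{0\}\cup\{M_i:v\in e_i\}$, perturbing by tiny distinct private elements to keep $f$ injective. Then $M_i\in D_{f(u)}\cap D_{f(v)}$ whenever $\{u,v\}=e_i$ is a non-edge, which supplies the required intersections; and because the markers grow fast enough that all the $M_i$ together with all their pairwise differences $|M_i-M_j|$ are distinct, a common value can lie in two different difference sets $D_{f(u)}$ and $D_{f(v)}$ only when $u$ and $v$ share a non-edge, i.e. only when $\{u,v\}$ is itself a non-edge. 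Hence adjacent vertices receive disjoint difference sets, so by Theorem~\ref{T-SIASI1} the map $f$ is genuinely a strong IASI whose difference-set disjointness pattern coincides with the edge relation of $G$.

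The hard part is exactly this realisability step: prescribing the intersection pattern of the difference sets by honest finite sets of integers without creating \emph{spurious} coincidences among the induced pairwise differences. I expect the Sidon/super-increasing choice of markers to tame the unwanted cross-differences $|M_i-M_j|$ and the small differences coming from the private elements, but verifying that none of these ever accidentally matches a marker or a cross-term sitting in an adjacent vertex's difference set is the one place where a careful (though essentially routine) case check is required; everything else then follows from the triangle-free triple observation above.
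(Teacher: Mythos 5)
Your proposal is correct, and it is worth noting that the paper itself offers no proof of this proposition at all: it is stated as a cited result from \cite{GS12}, and the surrounding text only gestures at the mechanism when it later asserts $\varkappa(G)=\omega(G)$ (Theorem \ref{T-NN1}). Your two bounds follow exactly the route that framework implicitly relies on --- adjacency forces $D_{f(u)}\cap D_{f(v)}=\emptyset$ via Theorem \ref{T-SIASI1}, so any edge gives $\varkappa(G)\ge 2$, while triangle-freeness guarantees every triple of vertices contains a non-adjacent pair --- but the genuine added value of your argument is the realizability step, which the paper silently assumes everywhere: one must actually exhibit a strong IASI whose difference-set disjointness pattern coincides with adjacency, since a careless labeling (e.g.\ all singleton or otherwise ``generic'' labels) can make non-adjacent vertices have disjoint difference sets too, inflating the maximal sequence length up to $n$. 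Your marker construction does deliver this; concretely, taking $M_i=4^iN$ with $N$ exceeding all private elements, every potential coincidence between $D_{f(u)}$ and $D_{f(v)}$ for adjacent $u,v$ reduces to an equation among distinct powers of $4$ (scaled by $N$) offset by a term smaller than $N$, which is impossible, and since adjacent vertices share no non-edge, their marker index sets are disjoint, so intersections of difference sets occur precisely at non-edges --- the ``routine case check'' you defer does close without surprises.

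Three small caveats. First, the paper's literal definition $D_A=\{|a-b|:a,b\in A\}$ places $0$ in every difference set, so disjointness must be read as excluding the trivial difference $a=b$; you adopt this convention silently, as does the paper, but it deserves a sentence. Second, your lower-bound phrasing is slightly off: a maximal sequence need not contain the two specific sets $D_{f(u)},D_{f(v)}$; the correct statement is that any sequence of length $1$ can be extended by the difference set of a neighbour (which exists because the paper excludes isolated vertices), so every maximal sequence has length at least $2$. Third, you never verify injectivity of the induced map $f^{+}$, which the definition of a strong IASI formally requires; this can be arranged by perturbation and matches the paper's own level of rigor, since Theorem \ref{T-SIASI1} is likewise stated purely in terms of difference sets, but a complete write-up should acknowledge it.
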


Recall that a {\em clique} of graph $G$ is a complete subgraph of $G$ and the {\em clique number},denoted by $\omega(G)$, of $G$ is the number of vertices in a maximal clique of $G$. Then, due to the above results we have

\begin{theorem}\label{T-NN1}
{\rm \cite{GS12}} The nourishing number of a graph $G$ is the clique number $\omega(G)$ of $G$.
\end{theorem}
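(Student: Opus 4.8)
The plan is to read the nourishing number as $\varkappa(G)=\min_f L(f)$, where $f$ ranges over the strong IASIs of $G$ and $L(f)$ is the length of a longest sequence of pairwise disjoint difference sets of the vertex set-labels under $f$, and then to prove the two inequalities $\varkappa(G)\ge \omega(G)$ and $\varkappa(G)\le \omega(G)$ separately.

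For the lower bound, I would fix an arbitrary strong IASI $f$ and let $C=\{v_1,\dots,v_{\omega}\}$ be a maximum clique of $G$, so that $|C|=\omega(G)$. Since the vertices of $C$ are pairwise adjacent, Theorem \ref{T-SIASI1} forces their difference sets to be pairwise disjoint, giving a sequence $D_{f(v_1)}<D_{f(v_2)}<\dots<D_{f(v_{\omega})}$ of length $\omega(G)$. Hence every maximal sequence of difference sets for $f$ has length at least $\omega(G)$, and as $f$ was arbitrary, $\varkappa(G)\ge\omega(G)$. This is precisely the mechanism behind the preceding propositions: for $K_n$ the whole vertex set is a clique, whereas for bipartite and triangle-free graphs $\omega=2$.

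For the upper bound I would construct a single strong IASI $f^{\ast}$ for which disjointness of difference sets coincides exactly with adjacency, so that the pairwise disjoint collections of difference sets are precisely the cliques of $G$ and the longest such sequence has length $\omega(G)$. To this end I would index the non-edges of $G$, attach to each non-edge $e=\{u,w\}$ a single common ``difference token'' $d_e$, and set $f^{\ast}(v)=\{0\}\cup\{d_e : v\in e,\ e\notin E(G)\}$. For a non-adjacent pair $u,w$ the token $d_{\{u,w\}}$ lies in both difference sets, so they are not disjoint; for an adjacent pair no non-edge contains both endpoints, so their token sets are disjoint. Consequently any set of vertices whose difference sets are pairwise disjoint is a clique, giving $L(f^{\ast})\le\omega(G)$ and hence $\varkappa(G)\le\omega(G)$; together with the lower bound this yields $\varkappa(G)=\omega(G)$.

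The main obstacle is turning this incidence scheme into an honest family of integer difference sets. The set-label $f^{\ast}(v)=\{0\}\cup\{d_e:v\in e,\ e\notin E(G)\}$ has a difference set containing not only the tokens $d_e$ but also every internal difference $|d_e-d_{e'}|$, and I must ensure that these extra values neither create an accidental overlap between the difference sets of an adjacent pair (which would violate Theorem \ref{T-SIASI1} and spoil the strong IASI) nor collapse the injectivity of $f^{\ast}$. I would resolve this by choosing the tokens $d_e$ to form a rapidly growing, super-increasing sequence, so that each value $|d_e-d_{e'}|$ determines the unordered pair $\{e,e'\}$ uniquely and never coincides with a token or with another internal difference; appending one enormous vertex-specific marker to each label then restores injectivity while only introducing differences local to a single vertex. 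Checking that a super-increasing choice removes every collision is the sole technical step, after which the disjoint-iff-adjacent dictionary and the two counting bounds are immediate.
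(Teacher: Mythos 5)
Your proposal is correct, and it is in fact more complete than what the paper itself records. The paper, being a survey, simply states the theorem as a consequence of the preceding propositions (the nourishing numbers of $K_n$, of bipartite graphs and of triangle-free graphs), and the only mechanism visible in the text is your lower bound: by Theorem \ref{T-SIASI1} (together with the hereditary property of Theorem \ref{T-SIASI-SG}), the vertices of a maximum clique must receive pairwise disjoint difference sets, so $L(f)\ge\omega(G)$ for every strong IASI $f$. What the paper never makes explicit is the upper bound $\varkappa(G)\le\omega(G)$, which genuinely requires exhibiting a strong IASI under which no non-adjacent pair accidentally has disjoint difference sets; your token construction --- one shared value $d_e$ per non-edge $e$, chosen super-increasing so that each internal difference $|d_e-d_{e'}|$ determines its pair and avoids every token, plus a large vertex-specific marker --- supplies exactly this witness and makes disjointness of difference sets coincide with adjacency, so that the longest disjoint sequence has length exactly $\omega(G)$. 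I checked the collision analysis: with, say, $d_i=4^i$, every difference $d_i-d_j$ lies strictly between consecutive powers and the map $(i,j)\mapsto d_i-d_j$ is injective, and a collision between an internal difference at $u$ and one at an adjacent $w$ would force some non-edge to contain both $u$ and $w$, which is absurd; placing the markers in widely separated bands keeps their induced differences local to one vertex. Two small points to tighten: first, in the lower bound you write that \emph{every} maximal sequence has length at least $\omega(G)$, but an inextensible sequence avoiding the clique can be short (a single difference set meeting all others is already inextensible), so the bound should be stated for the longest sequence, i.e.\ for $L(f)$ exactly as you defined it; second, a strong IASI is by definition a set-indexer, so you should also record that $f^{*+}$ is injective --- with your separated marker bands the maximum element $M_u+M_v$ of $f^{*}(u)+f^{*}(v)$ already determines the edge $uv$, so this is immediate.
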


Invoking Theorem \ref{T-NN1}, we have established the following results on various operations of graphs. The following result is about the admissibility of a strong IASI for the union of two strong IASI graphs.

\begin{theorem}\label{T-SIGU1}
{\rm \cite{GS12}} The union $G_1\cup G_2$ of two graphs $G_1$ and $G_2$, admits a strong IASI if and only if both $G_1$ and $G_2$ admit strong IASIs.
\end{theorem}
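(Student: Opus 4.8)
Since the statement is an equivalence, I would prove the two implications separately. The forward implication---that a strong IASI of $G_1\cup G_2$ forces strong IASIs on each factor---requires no new argument: $G_1$ and $G_2$ are subgraphs of $G_1\cup G_2$, so by the hereditary property recorded in Theorem \ref{T-SIASI-SG} each of them inherits an induced strong IASI from any strong IASI of the union. This disposes of one half immediately.

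The substance lies in the converse. Given strong IASIs $f_1$ on $G_1$ and $f_2$ on $G_2$, the plan is to manufacture a single strong IASI $f$ of the union. Rather than work with the product-cardinality condition directly, I would argue through the difference-set characterization of Theorem \ref{T-SIASI1}, which is purely local: it only demands that the difference sets of adjacent vertices be disjoint. The structural fact I would lean on throughout is that a difference set is unchanged under translation of its underlying set-label, that is $D_{A+\{t\}}=D_A$ for every integer $t$. Consequently I may shift any vertex-label by a constant without disturbing any disjointness of difference sets, and I can spend these shifts to recover the two global injectivity conditions (injectivity of $f$ on the vertices and of $f^+$ on the edges) that the raw superposition of $f_1$ and $f_2$ may violate.

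For the vertex-disjoint case this completes easily: I would retain $f_1$ on $G_1$ and replace $f_2$ by $v\mapsto f_2(v)+\{M\}$ for a constant $M$ exceeding every integer occurring in the labels of $G_1$. Translation invariance keeps the adjacent difference sets of $G_2$ disjoint, so the strong condition persists on each part; the choice of $M$ puts the two label families, and likewise the two families of edge-sums, into disjoint numerical bands, which forces both $f$ and $f^+$ to be injective. As a disjoint union carries no edge between $G_1$ and $G_2$, no new adjacency constraint is introduced and $f$ is a strong IASI of $G_1\cup G_2$.

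The main obstacle is the overlap: when $G_1$ and $G_2$ share vertices, a common vertex cannot be translated independently on the two sides, and---more seriously---the union may contain a clique present in neither factor (for instance a triangle formed from two edges of $G_1$ and one of $G_2$), so a factor's labelling need not extend. Here I would abandon the superposition and instead build a labelling of the union directly, using Theorem \ref{T-SIASI-Gg}: the task is to exhibit a single finite sequence of pairwise-disjoint difference sets that simultaneously honours every adjacency of $E_1\cup E_2$. The delicate step is to verify that the local disjointness demands coming from $G_1$ and from $G_2$ are jointly satisfiable, which I expect to establish by ordering the vertices so that the difference sets can be chosen disjointly along the whole graph, the required length of the sequence being controlled by the clique number $\omega(G_1\cup G_2)$ via Theorem \ref{T-NN1}. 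Checking this compatibility over the shared part, while keeping $f$ and $f^+$ injective, is where the real care is needed and is the crux of the converse.
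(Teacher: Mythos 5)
Your forward implication matches the paper's framework exactly: $G_1$ and $G_2$ sit inside the union as subgraphs, so Theorem \ref{T-SIASI-SG} hands each an induced strong IASI. Your disjoint-union construction for the converse is also sound and complete: translation invariance $D_{A+\{t\}}=D_A$ preserves the edgewise disjointness demanded by Theorem \ref{T-SIASI1}, and a shift by a large $M$ separates the two families of vertex labels and of edge sums into disjoint numerical bands, restoring injectivity of $f$ and $f^+$. Note that the paper itself records this theorem without proof, citing \cite{GS12}; for the disjoint case your argument is essentially the expected superposition-with-shift.

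The genuine gap is your final paragraph, and it matters, because the paper evidently allows overlapping unions (otherwise Theorem \ref{T-NNGU1} would assert equality, not $\varkappa(G_1\cup G_2)\ge \max\{\varkappa(G_1),\varkappa(G_2)\}$, since for a disjoint union $\omega$, and hence $\varkappa$ by Theorem \ref{T-NN1}, equals the maximum). Two problems. First, the obstacle you flag is partly illusory: by Theorem \ref{T-SIASI1} the strong condition is purely edge-local, so a triangle of $G_1\cup G_2$ assembled from two edges of $G_1$ and one of $G_2$ imposes nothing beyond the three pairwise disjointness constraints of its edges, each of which already lies in $E(G_1)$ or $E(G_2)$; cliques govern the nourishing number, not existence. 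The only real difficulty is consistency of labels on shared vertices. Second, your proposed resolution --- ordering the vertices so that difference sets ``can be chosen disjointly along the whole graph,'' with the compatibility check acknowledged as ``where the real care is needed'' --- is a statement of intent, not an argument; as written, the converse is unproved whenever $V(G_1)\cap V(G_2)\neq\emptyset$. The repair is cheap and lies inside the paper's own toolkit: the theorem asks only whether the union admits \emph{some} strong IASI, so you may discard $f_1$ and $f_2$ entirely. A complete graph admits a strong IASI (implicit in Theorem \ref{T-SIASI-Kn} and in the proposition that $\varkappa(K_n)=n$; e.g.\ label $v_i$ by $\{0,4^i\}$, giving pairwise disjoint difference sets and injective edge sums), $G_1\cup G_2$ embeds in a complete graph, and Theorem \ref{T-SIASI-SG} passes the strong IASI down to every subgraph. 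This settles the converse for any union, overlapping or not, and incidentally shows the equivalence holds for the blunt reason that every graph admits a strong IASI --- which your translation machinery, elegant as it is for preserving the \emph{given} labelings, does not need.
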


\ni Invoking Theorem \ref{T-NN1}, we have

\begin{theorem}\label{T-NNGU1}
{\rm \cite{GS12}} Let $G_1$ and $G_2$ be two strong IASI graphs. Then, $\varkappa(G_1\cup G_2)\ge max\{\varkappa (G_1),\varkappa(G_2)\}$.
\end{theorem}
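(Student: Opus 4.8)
The plan is to reduce the inequality to an elementary monotonicity property of the clique number, exploiting the characterization $\varkappa(G)=\omega(G)$ supplied by Theorem \ref{T-NN1}. First I would check that the quantity $\varkappa(G_1\cup G_2)$ is even well defined. By Theorem \ref{T-SIGU1}, the union of two strong IASI graphs again admits a strong IASI, so $G_1\cup G_2$ is a strong IASI-graph and its nourishing number exists. This is the only preliminary point that genuinely needs an earlier result; everything after it is structural.

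Next I would invoke Theorem \ref{T-NN1} three times to rewrite the target inequality entirely in terms of clique numbers, so that proving $\varkappa(G_1\cup G_2)\ge \max\{\varkappa(G_1),\varkappa(G_2)\}$ is equivalent to proving $\omega(G_1\cup G_2)\ge \max\{\omega(G_1),\omega(G_2)\}$. The advantage of this reformulation is that $\omega$ is a purely combinatorial invariant of the underlying graph, independent of any particular set-labeling, so no compatibility between the labelings of $G_1$, $G_2$ and the labeling of their union needs to be tracked.

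The key observation is that $G_1\cup G_2$ is a supergraph of each factor: since $V(G_i)\subseteq V(G_1\cup G_2)$ and $E(G_i)\subseteq E(G_1\cup G_2)$ for $i=1,2$, any complete subgraph of $G_i$ is still complete in $G_1\cup G_2$, because adjoining extra vertices and edges can never sever the edges already present among the vertices of a clique. In particular a maximal clique of $G_i$, on $\omega(G_i)$ vertices, survives intact as a clique of $G_1\cup G_2$, whence $\omega(G_1\cup G_2)\ge \omega(G_i)$ for each $i$. Taking the maximum over $i\in\{1,2\}$ gives $\omega(G_1\cup G_2)\ge \max\{\omega(G_1),\omega(G_2)\}$, and translating this back through Theorem \ref{T-NN1} yields the claim.

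I do not expect any serious obstacle in this argument: the heart of it is the monotonicity of $\omega$ under subgraph inclusion, which is immediate. The only two places requiring a moment's care are the well-definedness of $\varkappa(G_1\cup G_2)$, handled by Theorem \ref{T-SIGU1}, and the verification that the union really is a supergraph of each factor even when $G_1$ and $G_2$ overlap in vertices or edges; in the overlapping case the clique of $G_i$ is if anything more firmly embedded, so the inequality is unaffected. A direct argument through pairwise-disjoint sequences of difference sets is also possible, but it is more cumbersome than simply citing Theorem \ref{T-NN1}, so I would prefer the clique-number route.
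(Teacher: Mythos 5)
Your proposal is correct and follows essentially the same route as the paper, which introduces this theorem with the phrase ``Invoking Theorem \ref{T-NN1}, we have'' --- that is, the paper likewise reduces the nourishing number to the clique number $\omega$ and uses the monotonicity of $\omega$ under the supergraph relation $G_i \subseteq G_1\cup G_2$. Your additional care about well-definedness via Theorem \ref{T-SIGU1} and about overlapping vertex sets is sound and, if anything, slightly more thorough than the paper's terse treatment.
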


The following theorem on the existence of a strong IASI by the join of two strong IASI graphs has been established in \cite{GS2} as follows.

\begin{theorem}
{\rm \cite{GS12}} The join of two strong IASI graphs admits a strong IASI if and only if the difference set of the set-label of every vertex of one of them is disjoint from the difference sets of the set- labels of all vertices of the other. 
\end{theorem}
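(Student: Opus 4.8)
The plan is to reduce the question entirely to Theorem~\ref{T-SIASI1} by analysing the edge structure of the join. Writing $G = G_1 + G_2$, I observe that the edge set of $G$ partitions into three classes: the edges internal to $G_1$, the edges internal to $G_2$, and the \emph{cross edges} $uv$ with $u \in V(G_1)$ and $v \in V(G_2)$, every such pair being adjacent in the join by definition. By Theorem~\ref{T-SIASI1}, $G$ carries a strong IASI precisely when the difference sets of the set-labels of each adjacent pair are disjoint, so I need only track this disjointness condition across the three classes.

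For necessity, I would suppose that $G$ admits a strong IASI $f$ and restrict attention to the cross edges. Theorem~\ref{T-SIASI1} then forces $D_{f(u)}$ and $D_{f(v)}$ to be disjoint for every $u \in V(G_1)$ and every $v \in V(G_2)$. This is exactly the assertion that the difference set of the set-label of each vertex of $G_1$ is disjoint from those of all vertices of $G_2$, which is the stated condition (the two graphs playing symmetric roles).

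For sufficiency, I would assume that $G_1$ and $G_2$ are strong IASI-graphs under labelings $f_1$ and $f_2$ whose difference sets satisfy the cross condition, and define $f$ on $V(G) = V(G_1) \cup V(G_2)$ by $f|_{V(G_1)} = f_1$ and $f|_{V(G_2)} = f_2$. Disjointness of difference sets holds on the internal edges of $G_1$ and of $G_2$ because each $f_i$ is itself a strong IASI (applying Theorem~\ref{T-SIASI1} in each factor), and it holds on the cross edges by hypothesis. Hence every adjacent pair in $G$ has disjoint difference sets, and Theorem~\ref{T-SIASI1} yields a strong IASI of $G$.

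The one point that needs care---and the only genuine obstacle---is that the combined map $f$ must be injective to qualify as a set-labeling, so no vertex of $G_1$ may share a label with a vertex of $G_2$. The resolution is that translating any set-label $A$ by a fixed constant $c$ leaves the difference set unchanged, since $D_{A+c} = D_A$; consequently the cross condition is translation-invariant. Thus any coincidence of labels, which can occur only among singleton labels since two sets with disjoint nonempty difference sets are necessarily distinct, may be removed by shifting the labels of $G_2$ by a sufficiently large constant without disturbing any of the difference-set conditions established above.
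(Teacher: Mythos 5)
Your proposal is correct and follows essentially the same route as the paper's treatment: partition the edges of $G_1+G_2$ into the internal edges of each factor and the cross edges, then apply the difference-set characterization of Theorem~\ref{T-SIASI1} to each class, the cross edges giving exactly the stated condition in both directions. Your closing remark on injectivity of the combined labeling---removing coincident (necessarily singleton) labels by translating the labels of $G_2$, which leaves all difference sets invariant---is a point the paper passes over silently, so it is a welcome refinement rather than a deviation.
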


The nourishing number of the join two strong IASI graphs is given in terms of the nourishing numbers of these graphs as

\begin{theorem}
{\rm \cite{GS12}} Let $G_1$ and $G_2$ be two strong IASI graphs. Then,  $\varkappa(G_1+G_2)= \varkappa(G_1)+ \varkappa(G_2)$. 
\end{theorem}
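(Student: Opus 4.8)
The plan is to reduce the statement about nourishing numbers to a statement about clique numbers by means of Theorem \ref{T-NN1}, and then to establish the additivity of the clique number under the join operation. Since Theorem \ref{T-NN1} identifies the nourishing number of a graph with its clique number, namely $\varkappa(G)=\omega(G)$, it suffices to prove the purely graph-theoretic identity $\omega(G_1+G_2)=\omega(G_1)+\omega(G_2)$ and then translate back.

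First I would record the defining property of the join that drives the whole argument: we have $V(G_1+G_2)=V(G_1)\cup V(G_2)$, the edges lying inside $V(G_1)$ are precisely the edges of $G_1$, the edges lying inside $V(G_2)$ are precisely the edges of $G_2$, and in addition every vertex of $G_1$ is adjacent to every vertex of $G_2$. No new edges are introduced within either part.

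For the lower bound, I would take a maximum clique $C_1$ of $G_1$ and a maximum clique $C_2$ of $G_2$, so that $|C_1|=\omega(G_1)$ and $|C_2|=\omega(G_2)$. Because every cross pair is adjacent in the join, $C_1\cup C_2$ induces a complete subgraph of $G_1+G_2$, and hence $\omega(G_1+G_2)\ge \omega(G_1)+\omega(G_2)$. For the upper bound, I would take a maximum clique $C$ of $G_1+G_2$ and split it as $C=(C\cap V(G_1))\cup(C\cap V(G_2))$. Since all edges of the join that lie inside $V(G_i)$ are exactly the edges of $G_i$, each piece $C\cap V(G_i)$ is a clique of $G_i$, so $|C|=|C\cap V(G_1)|+|C\cap V(G_2)|\le \omega(G_1)+\omega(G_2)$. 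Combining the two inequalities gives $\omega(G_1+G_2)=\omega(G_1)+\omega(G_2)$, and applying Theorem \ref{T-NN1} termwise then yields $\varkappa(G_1+G_2)=\varkappa(G_1)+\varkappa(G_2)$.

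The argument is short once Theorem \ref{T-NN1} is invoked, so I do not anticipate a serious obstacle; the only point requiring care is the upper bound, where one must genuinely use that the join adds no edges within either part, so that a clique of $G_1+G_2$ restricts to cliques of the two factors rather than merely to arbitrary vertex subsets. Were one instead to argue directly from maximal sequences of difference sets and bypass Theorem \ref{T-NN1}, the hard part would be to show that a maximal difference-set sequence of the join must be the concatenation of a maximal sequence of $G_1$ with one of $G_2$; this relies on the preceding admissibility theorem (that across the join the difference sets of the labels in the two factors must be mutually disjoint) and is appreciably more delicate, which is why the clique-number route is the preferable one.
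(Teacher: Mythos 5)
Your proposal is correct and follows essentially the route the paper itself indicates: the survey explicitly states that these results on graph operations are obtained by invoking Theorem \ref{T-NN1} (that $\varkappa(G)=\omega(G)$), and your two-inequality argument for $\omega(G_1+G_2)=\omega(G_1)+\omega(G_2)$ --- using that the join adds all cross edges but no edges within either factor --- is the standard and intended justification. Your closing remark correctly identifies why the direct difference-set argument is the harder path; no gap to report.
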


We are now going to review the conditions for the existence of a strong IASI for the complement of a strong IASI graph. Since $G$ and its complement $\overline{G}$ has the same vertex set, the vertices of $G$ and $\overline{G}$ must have the same set-labels. It can be noted that a strong IASI defined on a graph $G$ need not be a strong IASI for $\overline{G}$. Hence, we have proposed  necessary and sufficient condition for the existence of a strong IASI for the complement of a strong IASI graph.

\begin{theorem}
{\rm \cite{GS12}} The complement of a strong IASI graph $G$ admits a strong IASI if and only if the difference sets of the set-labels of all vertices of $G$ are pairwise disjoint.
\end{theorem}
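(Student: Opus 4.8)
The plan is to prove the biconditional in both directions, using Theorem~\ref{T-SIASI-Kn} as the key tool.

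First I would set up the framework. Let $G$ be a strong IASI graph with set-labeling $f$, and recall that $G$ and $\overline{G}$ share the same vertex set $V$, and by assumption carry the same set-labels. For each vertex $v_i$ write $D_i$ for the difference set $D_{f(v_i)}$. By Theorem~\ref{T-SIASI1}, the labeling $f$ is a strong IASI for $\overline{G}$ if and only if $D_i < D_j$ (that is, $D_i$ and $D_j$ are disjoint) for every pair $v_i,v_j$ that is adjacent \emph{in} $\overline{G}$.

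For the sufficiency direction, suppose the difference sets of all vertices of $G$ are pairwise disjoint. Then in particular $D_i<D_j$ for every pair of distinct vertices, so the hypothesis of Theorem~\ref{T-SIASI1} is satisfied for every edge of $\overline{G}$ regardless of which pairs happen to be adjacent in $\overline{G}$; hence $\overline{G}$ admits a strong IASI. For the necessity direction, I would argue by contradiction. Suppose $\overline{G}$ admits a strong IASI (necessarily with the shared labels, as noted in the excerpt), but the difference sets of $G$ are \emph{not} pairwise disjoint, so there exist distinct vertices $v_i,v_j$ with $D_i\cap D_j\neq\emptyset$. These two vertices are either adjacent in $G$ or adjacent in $\overline{G}$. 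They cannot be adjacent in $G$, since $f$ is a strong IASI of $G$ and Theorem~\ref{T-SIASI1} forces $D_i<D_j$ for adjacent vertices of $G$. Hence $v_iv_j$ is an edge of $\overline{G}$; but then applying Theorem~\ref{T-SIASI1} to $\overline{G}$ forces $D_i\cap D_j=\emptyset$, a contradiction. This establishes necessity.

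The main obstacle is not a computational one but a matter of handling the shared-label subtlety carefully: since both $G$ and $\overline{G}$ must use the same labels $f$, one must be sure the pairwise-disjointness condition is the correct invariant that decouples the two graphs. The crux is the dichotomy that any pair of distinct vertices is an edge of exactly one of $G$ and $\overline{G}$: a strong IASI on $G$ controls only the $G$-edges, a strong IASI on $\overline{G}$ controls only the $\overline{G}$-edges, and demanding both simultaneously forces disjointness across \emph{every} pair --- which is precisely the complete-graph condition of Theorem~\ref{T-SIASI-Kn} applied to $K_n = G \cup \overline{G}$. Indeed, one could streamline the whole argument by observing that $G$ and $\overline{G}$ both admit strong IASIs under the same labeling exactly when $K_n$ does, and then invoking Theorem~\ref{T-SIASI-Kn} directly.
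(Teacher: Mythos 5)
Your proof is correct and follows essentially the route the paper intends: the paper states this result without an explicit proof (citing \cite{GS12}), but its surrounding discussion --- the shared-label observation, Theorem~\ref{T-SIASI1} applied edge-by-edge, and the complete-graph condition of Theorem~\ref{T-SIASI-Kn} together with the follow-up proposition that $\varkappa(G)=\varkappa(\overline{G})=|V(G)|$ --- is exactly the edge-dichotomy/$K_n$ argument you give. Your closing remark that the two requirements combine into the condition for $K_n$ is precisely the paper's viewpoint, so nothing further is needed.
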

%\begin{proof}
%First, assume that the difference sets of the set-labels of all vertices of $G$ are pairwise disjoint. Let $K_n$ be the complete graph such that $V(K_n)=V(G)$.  Then, by Theorem \ref{T-SIASI-Kn}, $K_n$ admits a strong IASI, say $f$. Therefore, since $G$ and $\overline{G}$ are subgraphs of $K_n$, by Theorem \ref{T-SIASI-SG}, $f$ is also a strong IASI for both $G$ and $\overline{G}$.

%To prove the converse, assume that $\overline{G}$ admits a strong IASI, say $f$. Since, both $G$ and $\overline{G}$ have the same vertex set, and hence the same set-labeling, $f$ is a strong IASI for $G$ also. Then, by Theorem \ref{T-SIGU1}, $f$ is a strong IASI for the complete graph $K_n=G\cup \overline{G}$. Therefore, by Theorem \ref{T-SIASI-Kn}, the difference sets of the set-labels of all vertices of $G$ are pairwise disjoint. 
%\end{proof}

\ni Then, we have

\begin{proposition}
If the complement $\overline{G}$ of a strong IASI graph admits a strong IASI, then $\varkappa(G)=\varkappa(\overline{G})=|V(G)|$. 
\end{proposition}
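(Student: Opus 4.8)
The plan is to read the conclusion off from the theorem immediately preceding this proposition together with a direct inspection of the definition of the nourishing number. Write $n=|V(G)|$, and let $f$ be the strong IASI of $G$. As noted in the discussion introducing the complement theorem, $G$ and $\overline{G}$ share the same vertex set and hence carry the same set-labels, so the hypothesis that $\overline{G}$ admits a strong IASI means precisely that the \emph{same} function $f$ is also a strong IASI for $\overline{G}$. By that complement theorem, this is equivalent to the assertion that the difference sets $D_{f(v_1)},D_{f(v_2)},\ldots,D_{f(v_n)}$ of all the vertices of $G$ are pairwise disjoint.

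The heart of the argument is then to compute $\varkappa$ from these pairwise-disjoint difference sets. Because the difference sets of all $n$ vertices are mutually disjoint, they assemble into a single chain
\[
D_{f(v_1)}<D_{f(v_2)}<\cdots<D_{f(v_n)}
\]
under the difference relation $<$, a sequence of length $n$. I would next argue that this sequence is \emph{maximal} and, moreover, that it is the \emph{only} maximal one: it already contains every difference set, so it cannot be extended; and any sequence that omits some $D_{f(v_i)}$ fails to be maximal, since $D_{f(v_i)}$ is disjoint from each of the remaining difference sets and can therefore be appended. Hence every maximal sequence of difference sets has length exactly $n$, so the minimum length of a maximal sequence --- that is, $\varkappa(G)$ --- equals $n=|V(G)|$.

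Finally I would invoke symmetry. Since $\overline{G}$ carries the identical vertex set and the identical labels $f$, its vertices have exactly the same difference sets as those of $G$, all pairwise disjoint; running the same count gives $\varkappa(\overline{G})=n$ as well, and therefore $\varkappa(G)=\varkappa(\overline{G})=|V(G)|$, as claimed. Equivalently, one may observe that by Theorem \ref{T-SIASI-Kn} the pairwise disjointness of all $n$ difference sets makes $f$ a strong IASI of $K_n=G\cup\overline{G}$, whose nourishing number is $n$.

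I expect the only delicate point to be the bookkeeping around the word \emph{maximal} in the definition of $\varkappa$: one must verify that pairwise disjointness of all $n$ difference sets forces every maximal sequence to exhaust them, so that the minimum-length maximal sequence still has full length $n$. Once this is pinned down, the result is an immediate consequence of the preceding complement theorem.
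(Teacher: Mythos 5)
Your argument is correct and is essentially the route the paper intends: the paper states this proposition without proof as an immediate consequence of the preceding complement theorem, whose condition (pairwise disjointness of all $n$ difference sets, using the convention that $G$ and $\overline{G}$ carry the same set-labels) forces every maximal sequence of difference sets to exhaust all $n$ of them, exactly as you verify. Your closing observation via Theorem \ref{T-SIASI-Kn} and $G\cup\overline{G}=K_n$ also mirrors the paper's own remark that $D_1<D_2<\cdots<D_n$ gives $\varkappa(K_n)=n$.
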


\begin{corollary}
If $G$ is a self-complementary graph on $n$ vertices, which admits a strong IASI, then $\varkappa(G)=n$.
\end{corollary}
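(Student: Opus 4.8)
The plan is to reduce the corollary directly to the preceding Proposition, which asserts that whenever a strong IASI graph $G$ has the property that its complement $\overline{G}$ also admits a strong IASI, one has $\varkappa(G)=\varkappa(\overline{G})=|V(G)|$. The only work, then, is to verify that the two hypotheses of that Proposition are met when $G$ is self-complementary and admits a strong IASI, after which the conclusion $\varkappa(G)=n$ is immediate.

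First I would recall that $G$ being self-complementary means there is a graph isomorphism $\phi:G\to\overline{G}$. Since $V(G)=V(\overline{G})$, the map $\phi$ is a bijection of the common vertex set that carries edges of $G$ onto edges of $\overline{G}$ and non-edges onto non-edges. This isomorphism is the only extra ingredient beyond the Proposition, and its role is to transport a labeling of $G$ to a labeling of $\overline{G}$.

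Next I would transport a strong IASI from $G$ to $\overline{G}$. Given a strong IASI $f:V(G)\to\mathcal{P}(\mathbb{N}_0)$ of $G$, set $g=f\circ\phi^{-1}$. As a composition of injections, $g$ is injective. For any edge $xy\in E(\overline{G})$ we may write $x=\phi(u)$ and $y=\phi(v)$ with $uv\in E(G)$; then $g(x)=f(u)$ and $g(y)=f(v)$, so that $|g(x)+g(y)|=|f(u)+f(v)|=|f(u)|\,|f(v)|=|g(x)|\,|g(y)|$, where the middle equality uses that $f$ is a strong IASI on the edge $uv$. Hence $g$ is a strong IASI of $\overline{G}$, and in particular $\overline{G}$ admits a strong IASI.

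With both $G$ and $\overline{G}$ now known to admit strong IASIs, the preceding Proposition applies and yields $\varkappa(G)=\varkappa(\overline{G})=|V(G)|=n$, completing the proof. No step presents a genuine obstacle here; the one point requiring care is confirming that the isomorphism $\phi$ genuinely preserves the strong-IASI property, that is, that the edge cardinality condition transfers correctly across $\phi$, which the short computation above settles.
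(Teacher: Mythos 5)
Your reduction to the preceding Proposition is certainly the route the paper intends for this corollary, but the step where you transport the strong IASI along the isomorphism $\phi$ and then invoke the Proposition does not carry through in the paper's framework. The paper states explicitly that, since $V(G)=V(\overline{G})$, the vertices of $G$ and $\overline{G}$ must carry the \emph{same} set-labels; accordingly, the hypothesis of the Proposition --- that ``$\overline{G}$ admits a strong IASI'' --- is used there in the induced sense, namely that the given labeling $f$ of $G$ is simultaneously a strong IASI of $\overline{G}$. By the theorem immediately preceding the Proposition, this is equivalent to the difference sets of the set-labels of \emph{all} vertices of $G$ being pairwise disjoint, and that pairwise disjointness is the actual engine forcing $\varkappa(G)=|V(G)|$. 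Your relabeled map $g=f\circ\phi^{-1}$ is in general a different labeling of the common vertex set, so your computation establishes only that $\overline{G}$, as an abstract graph, admits \emph{some} strong IASI; it does not establish anything about the difference sets of the single labeling $f$, which is what the Proposition needs.

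The gap is genuine and not repairable by the same device: every graph admits some strong IASI (label $v_i$ by $\{0,d_i\}$ with distinct $d_i$, so the difference sets $\{d_i\}$ are pairwise disjoint), so under your unrestricted reading the Proposition's hypothesis would be vacuous and it would assert $\varkappa(G)=|V(G)|$ for all graphs, contradicting Theorem \ref{T-NN1} ($\varkappa(G)=\omega(G)$) and the Propositions giving nourishing number $2$ for bipartite and triangle-free graphs. Concretely, $P_4$ is self-complementary and carries the strong IASI $f(v_1)=\{0,1\}$, $f(v_2)=\{0,2\}$, $f(v_3)=\{1,2\}$, $f(v_4)=\{0,3\}$, whose consecutive difference sets $\{1\},\{2\},\{1\},\{3\}$ are disjoint on each edge; yet $f$ is not strong on $\overline{P_4}$ because $v_1v_3\in E(\overline{P_4})$ while $D_{f(v_1)}=D_{f(v_3)}=\{1\}$, and $\omega(P_4)=2\neq 4$. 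What the corollary actually requires --- and what a correct argument must supply --- is that the \emph{same} set-labels make $f$ strong on $\overline{G}$ as well, whence all $n$ difference sets are pairwise disjoint and $\varkappa(G)=n$ follows; self-complementarity invoked only through an abstract isomorphism, as in your proposal, does not yield this.
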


The existence of strong IASI for certain products of graphs have also been established in \cite{GS12}. The admissibility of a strong IASI by the Cartesian product of two strong IASI graphs has been established in \cite{GS12} as

\begin{theorem}
{\rm \cite{GS12}} Let $G_1$ and $G_2$ be two strong IASI graphs. Then, the product $G_1\Box G_2$ admits a strong IASI if and only if the set-labels of corresponding vertices different copies of $G_1$ which are adjacent in $G_1\Box G_2$ are disjoint.   
\end{theorem}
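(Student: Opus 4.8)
The plan is to recast the problem in the difference-set language of Theorem~\ref{T-SIASI1} and then exploit the rigid edge structure of a Cartesian product. Recall that every edge of $G_1\Box G_2$ is of exactly one of two types: a \emph{horizontal} edge $(u,x)(v,x)$ coming from an edge $uv\in E(G_1)$ inside the copy of $G_1$ indexed by a fixed $x\in V(G_2)$, or a \emph{vertical} edge $(u,x)(u,y)$ coming from an edge $xy\in E(G_2)$, which joins the \emph{corresponding} vertices of two different copies of $G_1$. By Theorem~\ref{T-SIASI1}, a vertex labeling $f$ of $G_1\Box G_2$ is a strong IASI if and only if, for every such edge, the difference sets of the labels of its two endpoints are disjoint; so I would handle the two edge types separately.

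For the necessity direction I would simply restrict. If $f$ is a strong IASI of $G_1\Box G_2$, then applying Theorem~\ref{T-SIASI1} to each vertical edge $(u,x)(u,y)$ at once forces the labels of the corresponding vertices of the two adjacent copies of $G_1$ to be disjoint; this is the easy half, obtained with no construction.

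For sufficiency I would build a labeling of the product from the given strong IASIs of $G_1$ and $G_2$. To each copy of $G_1$ indexed by $x\in V(G_2)$ I assign a strong IASI labeling of $G_1$ (which exists because $G_1$ is a strong IASI graph); this guarantees, via Theorem~\ref{T-SIASI1}, that the endpoints of every horizontal edge already have disjoint difference sets, so the horizontal edges never create an obstruction. The vertical edges are then governed precisely by the hypothesis: the assumed disjointness of the labels of corresponding vertices of adjacent copies is exactly what is needed to make each vertical edge satisfy the strong-IASI equality $|f^+(e)|=|f(u)|\,|f(v)|$. Since horizontal and vertical edges exhaust $E(G_1\Box G_2)$, the resulting $f$ is a strong IASI, once I have verified that the chosen copy labelings are globally injective on $V(G_1\Box G_2)$.

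I expect the main obstacle to be the simultaneous consistency of the construction in the sufficiency direction: the copy labelings must be chosen so that each copy is a strong IASI of $G_1$, so that the corresponding-vertex disjointness across all edges of $G_2$ holds, and so that the whole assignment remains injective, and these three demands interact through the second coordinate and cannot be arranged one copy at a time in isolation. A secondary technical point I would make precise is the relationship between disjointness of the set-labels themselves and disjointness of their difference sets, which is the criterion actually invoked at the vertical edges through Theorem~\ref{T-SIASI1}; the argument must ensure that the ``corresponding vertices are disjoint'' hypothesis delivers the difference-set disjointness required there.
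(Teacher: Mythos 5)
Your overall architecture --- split $E(G_1\Box G_2)$ into horizontal and vertical edges and apply the criterion of Theorem~\ref{T-SIASI1} edge by edge --- is the right one, and it is essentially the argument behind the result in \cite{GS12} (the present paper only cites the theorem and gives no proof of its own). However, the obstacle you flag at the end as a ``secondary technical point'' is in fact fatal to the proof as you have set it up, and it cannot be patched: disjointness of the set-labels themselves is neither sufficient nor necessary for the difference-set disjointness that Theorem~\ref{T-SIASI1} requires at a vertical edge. For sufficiency, take $f(u,x)=\{1,2\}$ and $f(u,y)=\{5,6\}$: the labels are disjoint, yet $D_{\{1,2\}}$ and $D_{\{5,6\}}$ share the nonzero difference $1$, and indeed $|\{1,2\}+\{5,6\}|=|\{6,7,8\}|=3\neq 4$, so the vertical edge $(u,x)(u,y)$ violates the strong condition. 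For necessity, take $\{1,2\}$ and $\{1,3\}$: the labels intersect, yet their nonzero differences are $1$ and $2$ respectively and $|\{1,2\}+\{1,3\}|=|\{2,3,4,5\}|=4$. So the theorem is only correct when its hypothesis is read as ``the \emph{difference sets} of the set-labels of corresponding vertices in adjacent copies are disjoint,'' which is how the source states it; with that reading your two-edge-type argument closes immediately, and with the literal reading no argument can close it.

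There is also a quantifier problem in your sufficiency direction. You propose to \emph{choose} a strong IASI labeling for each copy of $G_1$, but the condition on the right-hand side of the equivalence refers to the set-labels of vertices of the product --- that is, to the very labeling you are in the middle of constructing --- so as an existence statement the hypothesis has nothing to quantify over. The intended statement is a pointwise characterization: given a labeling of $G_1\Box G_2$ in which each copy of $G_1$ carries a (strong) IASI induced from labelings of $G_1$ and $G_2$, that labeling is a strong IASI of the product if and only if the cross-copy difference-set condition holds at every vertical edge. Read this way, both directions reduce to the immediate check over the two edge types (your necessity argument is already exactly this), the ``simultaneous consistency'' difficulty you anticipate disappears, and injectivity is part of the data of the given labeling rather than something you must engineer copy by copy.
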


\begin{proposition}
Let $G_1$ and $G_2$ be two graphs which admit strong IASIs. Then, $\varkappa(G_1\Box G_2) = \max\{\varkappa(G_1),\varkappa(G_2)\}$.
\end{proposition}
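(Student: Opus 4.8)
The plan is to reduce the statement to a purely structural fact about clique numbers and then to establish that fact directly from the adjacency rule of the Cartesian product. By Theorem~\ref{T-NN1}, the nourishing number of any strong IASI graph coincides with its clique number, so it suffices to prove
\[
\omega(G_1\Box G_2)=\max\{\omega(G_1),\omega(G_2)\}.
\]
Throughout I would use the standard description of the Cartesian product: two vertices $(u_1,u_2)$ and $(v_1,v_2)$ of $G_1\Box G_2$ are adjacent precisely when either $u_1=v_1$ and $u_2v_2\in E(G_2)$, or $u_2=v_2$ and $u_1v_1\in E(G_1)$; in particular, two adjacent vertices agree in exactly one coordinate.

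The lower bound is immediate. For any fixed $b\in V(G_2)$ the layer $G_1\times\{b\}$ is an isomorphic copy of $G_1$ sitting inside $G_1\Box G_2$, so a maximum clique of $G_1$ embeds as a clique of the product, giving $\omega(G_1\Box G_2)\ge\omega(G_1)$; symmetrically $\omega(G_1\Box G_2)\ge\omega(G_2)$, whence $\omega(G_1\Box G_2)\ge\max\{\omega(G_1),\omega(G_2)\}$.

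The crux is the reverse inequality, which rests on the following structural observation: every clique of $G_1\Box G_2$ lies entirely within a single copy $G_1\times\{b\}$ or a single copy $\{a\}\times G_2$. To prove it, I would fix a clique $C$ and a vertex $v=(a,b)\in C$. Every other vertex of $C$ is adjacent to $v$ and so, by the adjacency rule, is either of the form $(a',b)$ with $a'\neq a$ or of the form $(a,b')$ with $b'\neq b$. If $C$ contained both a vertex $w=(a',b)$ of the first kind and a vertex $z=(a,b')$ of the second kind, then $w$ and $z$ would differ in \emph{both} coordinates and hence could not be adjacent, contradicting that $C$ is a clique. Therefore all vertices of $C$ other than $v$ share the second coordinate $b$, so that $C\subseteq G_1\times\{b\}$, or all share the first coordinate $a$, so that $C\subseteq\{a\}\times G_2$.

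In either case $|C|\le\max\{\omega(G_1),\omega(G_2)\}$, which yields the reverse inequality and hence the clique identity; translating back through Theorem~\ref{T-NN1} then gives $\varkappa(G_1\Box G_2)=\max\{\varkappa(G_1),\varkappa(G_2)\}$. The only delicate point I anticipate is the ``mixed-direction'' case above: ruling it out is exactly what confines a clique to a single factor, and it is the heart of why the clique number, and hence the nourishing number, does not grow under the Cartesian product. (One should also note that, for $\varkappa(G_1\Box G_2)$ to be defined, the product must itself admit a strong IASI, which is guaranteed by the preceding theorem on Cartesian products under the stated disjointness condition.)
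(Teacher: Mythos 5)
Your proof is correct and follows exactly the route the paper intends: the paper states this proposition without an explicit proof, but its surrounding results on graph operations are all derived by invoking Theorem~\ref{T-NN1}, i.e., by reducing the nourishing number of a strong IASI graph to its clique number. Your layer argument establishing $\omega(G_1\Box G_2)=\max\{\omega(G_1),\omega(G_2)\}$ --- in particular ruling out the mixed-direction case so that every clique lies in a single $G_1$- or $G_2$-layer --- correctly supplies the standard fact the paper leaves implicit, and your closing caveat that $G_1\Box G_2$ must itself admit a strong IASI (per the preceding theorem) is exactly the right hypothesis to flag.
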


In \cite{GS12}, the admissibility of a strong IASI by the corona of two strong IASI graphs has been established as follows.

\begin{theorem}
Let $G_1$ and $G_2$ be two strong IASI graphs. Then, their corona $G_1\odot G_2$ admits a strong IASI if and only if the difference set of the set-label of every vertex of $G_1$ is disjoint from the difference sets of the set- labels of all vertices of the corresponding copy of $G_2$. 
\end{theorem}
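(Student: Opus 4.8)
The plan is to invoke Theorem \ref{T-SIASI1}, which reduces the existence of a strong IASI to the disjointness of the difference sets of adjacent vertices, and then to examine the edge set of the corona according to its natural structure. Recall that $G_1\odot G_2$ is formed from one copy of $G_1$, with vertices $v_1,v_2,\ldots,v_n$, together with $n$ copies $G_2^{(1)},G_2^{(2)},\ldots,G_2^{(n)}$ of $G_2$, where each vertex $v_i$ is joined to every vertex of the corresponding copy $G_2^{(i)}$. Accordingly, the edges of $G_1\odot G_2$ fall into three mutually exclusive classes: (i) the edges inherited from $G_1$; (ii) the edges lying within some copy $G_2^{(i)}$; and (iii) the edges joining $v_i$ to the vertices of $G_2^{(i)}$. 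By Theorem \ref{T-SIASI1}, the corona admits a strong IASI precisely when the end-vertices of every edge in each of these three classes have disjoint difference sets, so the whole proof amounts to checking these three classes.

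For the necessity, I would assume that $G_1\odot G_2$ admits a strong IASI $f$. Then Theorem \ref{T-SIASI1} guarantees that the difference sets of any two adjacent vertices are disjoint; applying this to the edges of class (iii), the difference set $D_{f(v_i)}$ is disjoint from the difference set of every vertex of $G_2^{(i)}$, for each $i$. This is exactly the asserted condition, so this direction is essentially immediate.

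For the sufficiency, I would start from the given strong IASIs on $G_1$ and on $G_2$. Since $G_1$ is a strong IASI graph, the class (i) edges automatically have end-vertices with disjoint difference sets; likewise each copy $G_2^{(i)}$, being a copy of the strong IASI graph $G_2$, disposes of class (ii). The hypothesis supplies exactly the disjointness needed for class (iii). Thus all three classes satisfy the difference-set criterion, and Theorem \ref{T-SIASI1} then returns a strong IASI for $G_1\odot G_2$.

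The delicate point, and the main technical hurdle, is that a set-indexer must be globally injective, whereas naively re-using the labels of $G_2$ on every copy $G_2^{(i)}$ repeats set-labels. I would resolve this using the translation-invariance of difference sets: replacing each element of a set-label $A$ by the corresponding element of $A+c$ leaves $D_A$ unchanged. Hence the copies $G_2^{(i)}$ can be re-labeled by adding suitably large, pairwise distinct constants $c_i$, separating all labels across copies and from the labels of $G_1$, while leaving every difference set $D_{f(v_i)}$ fixed and every difference set of a vertex of $G_2^{(i)}$ unchanged. Consequently all the disjointness relations verified above are preserved and injectivity is restored, which is precisely what makes the sufficiency construction go through.
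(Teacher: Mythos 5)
Your proof is correct and follows the route the paper intends: the paper itself states this corona theorem without proof (citing the source \cite{GS12}), but its proofs of the neighbouring results (e.g., the minor and subdivision theorems) all proceed exactly as you do, by classifying the edges and applying the difference-set criterion of Theorem \ref{T-SIASI1} to each class. Your additional observation that the repeated labels on the copies of $G_2$ must be translated by distinct constants $c_i$ --- legitimate since $D_{A+c}=D_A$, so all disjointness relations persist while injectivity of the labeling (and, with the constants chosen large enough, of the induced edge function) is restored --- addresses a point the paper and its source leave implicit, and strengthens rather than departs from the standard argument.
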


The nourishing number of the corona of two graphs is given in the following result, in terms of the nourishing number of the individual graphs, as follows.

\begin{proposition}
If $G_1$ and $G_2$ are two strong IASI graphs, then
\[ \varkappa(G_1\odot G_2) = \left\{
  \begin{array}{l l}
    \varkappa(G_1) & \quad \text{if $\varkappa(G_1)> \varkappa(G_2)$}\\
    \varkappa(G_2)+1 & \quad \text{if $\varkappa(G_2)> \varkappa(G_1)$}
  \end{array} \right.\]
\end{proposition}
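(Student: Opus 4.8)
The plan is to reduce the statement to a purely structural computation of the clique number of the corona, and then invoke Theorem \ref{T-NN1}, which identifies the nourishing number of any graph with its clique number. Thus it suffices to establish that
\[
\omega(G_1\odot G_2)=\max\{\omega(G_1),\,\omega(G_2)+1\},
\]
after which the two branches of the piecewise formula follow by unwinding the strict inequalities $\varkappa(G_1)>\varkappa(G_2)$ and $\varkappa(G_2)>\varkappa(G_1)$.

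First I would recall the structure of the corona $G_1\odot G_2$: it consists of one copy of $G_1$ on $n=|V(G_1)|$ vertices together with $n$ disjoint copies of $G_2$, with the $i$-th vertex $v_i$ of $G_1$ joined to every vertex of the $i$-th copy of $G_2$. I would then classify the maximal cliques of this graph. Any clique lying entirely inside the $G_1$-copy has order at most $\omega(G_1)$; any clique lying entirely inside a single copy of $G_2$ has order at most $\omega(G_2)$; and since each $v_i$ is adjacent to all vertices of the $i$-th copy of $G_2$, adjoining $v_i$ to a maximum clique of that copy produces a clique of order $\omega(G_2)+1$, which dominates the previous bound.

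The main step is to rule out any larger clique, and this is where the argument must be carried out carefully. A clique cannot contain vertices from two different copies of $G_2$, since such vertices are non-adjacent. Likewise, a clique cannot contain two vertices $v_i,v_j$ of $G_1$ together with a vertex $w$ of some copy of $G_2$: the vertex $w$ belongs to exactly one copy and is therefore adjacent to at most one of $v_i,v_j$, contradicting the clique property. Hence every clique is either contained in the $G_1$-copy, or consists of a clique of a single $G_2$-copy together with at most its attaching vertex $v_i$, which gives the displayed value of $\omega(G_1\odot G_2)$.

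Finally, I would convert back to nourishing numbers. Applying Theorem \ref{T-NN1} to $G_1\odot G_2$, to $G_1$, and to $G_2$ yields $\varkappa(G_1\odot G_2)=\max\{\varkappa(G_1),\varkappa(G_2)+1\}$. If $\varkappa(G_1)>\varkappa(G_2)$ then $\varkappa(G_1)\ge\varkappa(G_2)+1$, so the maximum equals $\varkappa(G_1)$; if $\varkappa(G_2)>\varkappa(G_1)$ then $\varkappa(G_2)+1>\varkappa(G_1)$, so the maximum equals $\varkappa(G_2)+1$. These are exactly the two branches of the asserted formula. The only genuine obstacle is the clique classification of the third paragraph; everything else is bookkeeping.
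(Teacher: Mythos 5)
Your proposal is correct and takes essentially the same route the paper intends: the paper derives its corona result by ``invoking Theorem \ref{T-NN1}'' (nourishing number equals clique number), and your classification of the maximal cliques of $G_1\odot G_2$ --- cliques inside $G_1$, versus cliques inside a single copy of $G_2$ possibly augmented by its attaching vertex $v_i$ --- is precisely the structural computation $\omega(G_1\odot G_2)=\max\{\omega(G_1),\omega(G_2)+1\}$ that underlies it. As a bonus, your unified $\max$ formula also settles the case $\varkappa(G_1)=\varkappa(G_2)$ (giving $\varkappa(G_2)+1$), which the proposition as stated in the paper leaves out.
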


\section{Associated Graphs of Strong IASI Graphs}

In this section, we review the results on the admissibility of induced strong IASIs by certain graphs that are associated to given strong IASI graphs.

The {\em line graph} of an undirected graph $G$ is another graph $L(G)$ that represents the adjacencies between edges of $G$. That is, every edge of $G$ corresponds to a vertex in $L(G)$ and the adjacency of edges in $G$ defines the adjacency of corresponding vertices in $L(G)$. Hence, we have
 
\begin{theorem}\label{T-SIASI-LG}
{\rm \cite{GS0}} The line graph of a strong IASI graph does not admit an induced strong IASI.
\end{theorem}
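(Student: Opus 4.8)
The plan is to produce, for a given strong IASI $f$ on $G$, a pair of adjacent vertices in $L(G)$ whose inherited set-labels fail the criterion of Theorem \ref{T-SIASI1}, and thereby conclude that the induced labeling cannot be a strong IASI. Write $g=f^+$ for the labeling that $L(G)$ inherits, so that the vertex of $L(G)$ corresponding to an edge $e=xy$ of $G$ carries the set-label $g(e)=f(x)+f(y)$. Injectivity of $g$ on $V(L(G))=E(G)$ is precisely the IASI property of $f$, so the only point in question is the strong (maximum-cardinality) condition on the edges of $L(G)$.

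First I would record the elementary but decisive observation that the difference set of an end-vertex survives inside the difference set of the corresponding edge-label. Concretely, for an edge $e=uv$ and any $d=|a_1-a_2|$ with $a_1,a_2\in f(u)$, choosing any $b\in f(v)$ gives $a_1+b,\,a_2+b\in g(e)$ with $|(a_1+b)-(a_2+b)|=d$; hence $D_{f(u)}\subseteq D_{g(e)}$, and symmetrically $D_{f(v)}\subseteq D_{g(e)}$. Now take a vertex $u$ of $G$ with $\deg_G(u)\ge 2$ together with two edges $e_1=uv$ and $e_2=uw$ incident at it. These are adjacent vertices of $L(G)$, and the observation yields $D_{f(u)}\subseteq D_{g(e_1)}\cap D_{g(e_2)}$. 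If $|f(u)|\ge 2$, then $D_{f(u)}$ contains a nonzero difference, so $D_{g(e_1)}$ and $D_{g(e_2)}$ share a nonzero element; by Theorem \ref{T-SIASI1} the induced labeling then violates the strong condition on the edge $e_1e_2$ of $L(G)$, which is exactly what is wanted.

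The hard part is the degenerate case. If every vertex of $G$ of degree at least two carries a singleton set-label (in particular, the all-singleton strong IASI), then $D_{f(u)}=\{0\}$ at each shared vertex and the obstruction above evaporates; indeed a singleton-labeled line graph is \emph{trivially} strong. I would therefore read the statement under the standing nontrivial hypothesis that $G$ carries a strong IASI in which some vertex of degree at least two has set-indexing number greater than one, and I expect the author's proof to operate implicitly in exactly this regime. A secondary point to verify is the precise reading of Theorem \ref{T-SIASI1}: since $0\in D_A$ for every nonempty $A$, ``disjoint'' there must mean sharing only the trivial difference $0$, and it is a shared \emph{nonzero} difference that the construction above produces.
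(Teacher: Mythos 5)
Your argument is exactly the paper's: the stated proof consists of precisely the two facts you establish, namely that $D_{f(u)}\cup D_{f(v)}\subseteq D_{f^+(uv)}$ for adjacent $u,v$ (your translation-invariance observation) and that consequently $D_{f^+(e_1)}\cap D_{f^+(e_2)}\ne\emptyset$ for adjacent edges $e_1,e_2$, contradicting the criterion of Theorem~\ref{T-SIASI1}. Your closing caveats are also sound and go slightly beyond the paper: ``disjoint'' in Theorem~\ref{T-SIASI1} must indeed be read as excluding only the trivial difference $0$ (otherwise no graph would admit a strong IASI at all), and the degenerate case you flag --- every vertex of degree at least two carrying a singleton set-label, where $D_{f(u)}$ has no nonzero element and the line graph's induced labeling is trivially strong --- is a genuine exception that the paper's one-line proof silently passes over.
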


The proof of the above theorem follows from the facts that $D_{f(u)}\cup \, D_{f(v)}\subseteq D_{f^+(uv)}$ for any two adjacent vertices $u$ and $v$ in $G$ and for any two adjacent edges $e_1$ and $e_2$ in $G$, $D_{f^+(e_1)}\cap \, D_{f^+(e_2)}\ne \emptyset$.

The {\em total graph}  $T(G)$ of a graph $G$ has a vertex corresponding to each element (edge or vertex) of $G$ such that the adjacency between the vertices in $T(G)$ is determied by the adjacency or incidence between the corresponding elements of $G$>. Then, we have

\begin{theorem}\label{T-SIASI-tG}
{\rm \cite{GS0}} The total graph of a strong IASI graph does not admit an induced strong IASI.
\end{theorem}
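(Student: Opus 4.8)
The plan is to exhibit, in the total graph $T(G)$ equipped with its natural induced labeling, a pair of adjacent vertices whose difference sets are forced to intersect, and then to invoke Theorem \ref{T-SIASI1} to conclude that no strong IASI can exist. Recall that under the induced labeling each vertex of $T(G)$ arising from a vertex $v$ of $G$ carries the label $f(v)$, while each vertex arising from an edge $e=uv$ of $G$ carries the induced label $f^+(e)=f(u)+f(v)$.

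First I would record the key containment for sumsets: for any edge $e=uv$ of $G$ one has $D_{f(u)}\subseteq D_{f^+(e)}$, since every difference $|a-a'|$ with $a,a'\in f(u)$ can be written as $|(a+b)-(a'+b)|$ for a fixed $b\in f(v)$, and $a+b,\,a'+b\in f(u)+f(v)=f^+(e)$. This is precisely the containment quoted after Theorem \ref{T-SIASI-LG}, and it holds regardless of the labels chosen.

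Next I would use the defining incidence adjacency of the total graph. Since $G$ has no isolated vertices, it has at least one edge $e=uv$; in $T(G)$ the vertex coming from $u$ is adjacent to the vertex coming from $e$, because $u$ is incident with $e$. Their labels are $f(u)$ and $f^+(e)$, and by the containment above $D_{f(u)}\cap D_{f^+(e)}=D_{f(u)}\neq\emptyset$ (it contains $0$). Hence the difference sets of this adjacent pair are not disjoint, so by Theorem \ref{T-SIASI1} the induced labeling fails to be a strong IASI, and therefore $T(G)$ admits no induced strong IASI.

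I do not expect a serious obstacle here: the argument is a direct transcription of the mechanism behind the line-graph result, with the incidence adjacency of $T(G)$ playing the role that edge-adjacency played in Theorem \ref{T-SIASI-LG}. An alternative, equally short route is to observe that $T(G)$ contains $L(G)$ as the induced subgraph on its edge-vertices, so an induced strong IASI on $T(G)$ would restrict to one on $L(G)$ by the hereditary property (Theorem \ref{T-SIASI-SG}), contradicting Theorem \ref{T-SIASI-LG}. That alternative needs $G$ to have two adjacent edges, whereas the incidence argument covers every graph with at least one edge (including $K_2$, whose total graph is $K_3$), so I would present the incidence argument as the primary proof.
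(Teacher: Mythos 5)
Your proposal is correct, but your primary argument is genuinely different from the paper's. The paper proves this theorem exactly by your \emph{alternative} route: it observes that $L(G)$ is a subgraph of $T(G)$, invokes Theorem \ref{T-SIASI-LG} to say $L(G)$ admits no induced strong IASI, and concludes via the hereditary property (Theorem \ref{T-SIASI-SG}) that $T(G)$ cannot either. Your primary proof --- the incidence-adjacency argument, using $D_{f(u)}\subseteq D_{f^+(uv)}$ for the pair consisting of a vertex $u$ and an incident edge-vertex $uv$ --- does not appear in the paper, and it buys something real: the paper's chain is vacuous when $L(G)$ has no edges (e.g., $G=K_2$ or a perfect matching, where $L(G)$ is edgeless and trivially admits a strong IASI), whereas your incidence argument needs only a single edge of $G$, as you correctly point out. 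One caveat you should repair: your parenthetical ``it contains $0$'' proves nonemptiness of the intersection only under the literal definition $D_A=\{|a-b|:a,b\in A\}$, but under that literal definition \emph{every} pair of difference sets meets at $0$, which would make the disjointness criterion of Theorem \ref{T-SIASI1} unsatisfiable; the intended convention clearly excludes the zero difference, so you should instead say that $D_{f(u)}$ contains a nonzero element whenever $|f(u)|\ge 2$. With that reading, your argument (like the paper's) tacitly assumes the relevant set-labels are not all singletons --- in the all-singleton case the induced labeling on $T(G)$ can in fact be strong, so the degenerate case is a blemish shared by both proofs, not a defect peculiar to yours.
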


The line graph of $G$ is a subgraph of the total graph $G$ and by Theorem \ref{T-SIASI-LG}, $L(G)$ does not admit a strong IASI. Hence, by Theorem \ref{T-SIASI-SG}, $T(G)$ does not admit a strong IASI. 

A graph $H$ is said to be  {\em homeomorphic} to a given graph $G$ if $H$ is obtained by replacing some paths of length $2$ in $G$ which are not in any triangle of $G$, by some edges (which are originally not in $G$). This operation is called an {\em elementary topological reduction}. The following is a necessary and sufficient condition for a graph homeomorphic to a given strong IASI graph $G$ to admit a strong IASI.

\begin{theorem}
{\rm \cite{GS2}} Any graph $H$, obtained by applying a $r$ elementary topological reductions on a strong IASI graph $G$, admits a strong IASI if and only if there exist $r$ distinct paths of length $2$ in $G$ which are not in any triangle of $G$, the difference sets of the set-labels of whose vertices are pairwise disjoint. 
\end{theorem}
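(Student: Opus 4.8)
My plan is to reduce the statement to the difference-set criterion of Theorem \ref{T-SIASI1} and then track exactly which adjacencies are created or destroyed by a topological reduction. First I would fix a strong IASI $f$ on $G$; by Theorem \ref{T-SIASI1} this is equivalent to $D_{f(x)} \cap D_{f(y)} = \emptyset$ for every edge $xy$ of $G$. Then I would analyse a single elementary topological reduction on a path $P : u - w - v$ of length $2$ lying in no triangle of $G$: the (degree-two) centre $w$ is suppressed and a new edge $uv$ is inserted. The hypothesis that $P$ is in no triangle is precisely the statement $u \not\sim v$ in $G$, so $uv$ is genuinely a new edge and the resulting graph is a bona fide graph homeomorphic to $G$.

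Next I would equip $H$ with the induced set-labeling, namely the restriction of $f$ to $V(H) = V(G) \setminus \{w_1, \dots, w_r\}$, where $w_1, \dots, w_r$ are the suppressed centres of the $r$ chosen paths $P_i : u_i - w_i - v_i$. Since distinct length-$2$ paths with degree-two centres have distinct middle vertices, the $r$ reductions are independent, and their net effect is simply to delete the $w_i$ together with the edges $u_iw_i, v_iw_i$ and to insert the edges $u_iv_i$. The edge set of $H$ therefore splits into two types: the surviving edges of $G$ and the new edges $u_iv_i$. Applying Theorem \ref{T-SIASI1} to $H$, the induced labeling is a strong IASI exactly when every edge of $H$ has end vertices with disjoint difference sets. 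For the surviving edges this is inherited for free from $G$, so the only genuine constraints are $D_{f(u_i)} \cap D_{f(v_i)} = \emptyset$ for $i = 1, \dots, r$.

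To finish, I would combine these new constraints with the relations $D_{f(u_i)} \cap D_{f(w_i)} = \emptyset$ and $D_{f(w_i)} \cap D_{f(v_i)} = \emptyset$, which already hold because $u_iw_i$ and $w_iv_i$ are edges of the strong IASI graph $G$. Together these three say precisely that the difference sets $D_{f(u_i)}, D_{f(w_i)}, D_{f(v_i)}$ of the vertices of $P_i$ are pairwise disjoint. This delivers both implications simultaneously: $H$ admits the induced strong IASI if and only if the $r$ reduction paths have pairwise disjoint vertex difference sets, which is the asserted existence statement.

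The main obstacle I anticipate is not the algebra but the bookkeeping around degenerate configurations and the precise meaning of "admits a strong IASI." I would need to rule out interference between the chosen paths — two reduction paths sharing an endpoint, or a new edge $u_iv_i$ placing another reduction path into a triangle — by requiring the paths to be internally vertex-disjoint and by testing the triangle condition in the original $G$ rather than in a partially reduced graph. I would also be careful to state that the strong IASI on $H$ under consideration is the one induced by $f$; for the "only if" direction this is the natural reading, and the hereditary Theorem \ref{T-SIASI-SG} keeps the restriction of $f$ to subgraphs consistent, so that no essentially different labeling need be examined.
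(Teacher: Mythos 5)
Your proposal is correct and follows essentially the same route the paper uses throughout this section: restrict $f$ to $V(H)$, note that the only new adjacencies are the edges $u_iv_i$ (the no-triangle hypothesis guaranteeing $u_i\not\sim v_i$ in $G$), and apply the disjoint-difference-set criterion of Theorem~\ref{T-SIASI1}, so that the new constraints $D_{f(u_i)}\cap D_{f(v_i)}=\emptyset$ combine with the existing edge relations along $u_iw_iv_i$ to give pairwise disjointness --- exactly the technique of the paper's proofs for minors and subdivisions (the theorem itself is cited from \cite{GS2} without an in-text proof). Your reading of ``admits a strong IASI'' as the labeling induced by $f$, and your explicit flagging of the degenerate-configuration bookkeeping, are consistent with the paper's conventions for these induced-IASI results.
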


A graph $H$ is a {\em minor} of a graph $G$ if it can be obtained from a subgraph of $G$ by contracting some of its edges. Here, contracting an edge $e$ means deleting $e$ and then identifying its end vertices. We now propose the the following results on the admissibility of strong IASI by the minors of given strong IASI graphs.

\begin{theorem}
A minor $H$ of a strong IASI graph admits an induced strong IASI if and only if there exists at least one edge in $G$, the difference set of the set-label of which is disjoint from the difference sets of the set-labels of its neighbouring vertices.
\end{theorem}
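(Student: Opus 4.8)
The plan is to isolate edge contraction as the only operation that can break the strong-IASI property, since by Theorem \ref{T-SIASI-SG} passing to a subgraph always preserves it. Recall that a minor $H$ is built from a subgraph $G'$ of $G$ by contracting edges; when an edge $e=uv$ is contracted to a single vertex $w$, the induced set-labeling must assign to $w$ the sum set $f(u)+f(v)=f^+(e)$, exactly as an edge inherits its label in the line-graph and total-graph constructions reviewed above. After the contraction, $w$ is adjacent precisely to the former neighbours of $u$ and of $v$, that is, to the neighbouring vertices of the edge $e$, while every other adjacency of $H$ is inherited unchanged from $G'$ and hence is already strong by Theorem \ref{T-SIASI-SG}. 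Thus the whole question reduces to whether the newly created vertex $w$ is compatible with its neighbours.

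For the sufficiency, suppose some edge $e=uv$ satisfies $D_{f^+(e)}\cap D_{f(x)}=\emptyset$ for every neighbouring vertex $x$ of $e$. First I would contract $e$ and invoke Theorem \ref{T-SIASI-SG} on the remainder, so that all inherited edges meet the strong condition. It then remains to check the edges incident to $w$: by Theorem \ref{T-SIASI1} the labeling is strong at $w$ exactly when $D_{f(w)}=D_{f^+(e)}$ is disjoint from $D_{f(x)}$ for each neighbour $x$, which is precisely the hypothesis. Hence $H$ admits an induced strong IASI.

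For the necessity, suppose a minor $H$ obtained by at least one contraction admits an induced strong IASI, and let $w$ be a vertex of $H$ arising from contracting an edge $e=uv$, so that $f(w)=f^+(e)$. Since $H$ is strong and $w$ is adjacent to every neighbouring vertex $x$ of $e$, Theorem \ref{T-SIASI1} forces $D_{f^+(e)}\cap D_{f(x)}=\emptyset$ for each such $x$. This exhibits an edge of $G$ whose difference set is disjoint from the difference sets of its neighbouring vertices, as required.

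The main obstacle, and the point I would treat most carefully, is the bookkeeping when several edges are contracted in succession: each contraction creates a vertex whose label is itself a sum set, and a later contraction may involve such a vertex, so one must verify that the inclusion $D_{f(u)}\cup D_{f(v)}\subseteq D_{f^+(uv)}$ (already used in the line-graph argument) interacts correctly with the disjointness requirement and that no new adjacency is overlooked. I would handle this by reducing the general minor to a single representative contraction that witnesses the stated existential condition, and by noting that the strong condition, being a purely local statement about pairs of adjacent vertices via Theorem \ref{T-SIASI1}, can be verified one contracted vertex at a time.
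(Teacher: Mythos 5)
Your proposal follows essentially the same route as the paper's own proof: contract a single edge $e=uv$ to a vertex $w$ carrying the set-label $f^+(e)$, observe that all other adjacencies are inherited (so strongness there is automatic), and apply the disjointness criterion of Theorem \ref{T-SIASI1} locally at $w$ to get both directions. If anything, you are slightly more careful than the paper, which silently restricts to one contraction, whereas you explicitly flag the bookkeeping needed for successive contractions and for the subgraph-deletion stage via Theorem \ref{T-SIASI-SG}.
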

\begin{proof}
Let $G$ be a graph tht admits a strong IASI $f$ and let $e=v_iv_j$ be an edge of $G$. Let $H$ be a graph obtained by contracting the edge $e$. If $w$ is the new vertex in $H$ obtained by identifying the end vertices of $e$ in $G$. Then, the set-label of $w$ in $H$ is the same as that of $e$ in $G$. Note that all vertices of $G$ other than $v_i$ and $v_j$ are also the vertices of $H$. Let $v_r$ and $v_s$ be the vertices adjacent to $v_i$ and $v_j$ respetively, in $G$. Then, $v_r$ and $v_s$ be the adjacent vertices of $w$ in $H$. 

Now, assume that $H$ admits a strong IASI, say $f'$, induced by $f$. Then, for all vertices $v\neq v_i,v_j$, we have $f'(v)=f(v)$ and $f'(w)=f^+(uv)$. Therefore, $D_{f'(w)}\cap D_{f(v_r)}=\emptyset$ and $D_{f'(w)}\cap D_{f(v_s)}=\emptyset$ and hence $D_{f^+(v_iv_j)}\cap D_{f(v_r)}=\emptyset$ and $D_{f^+(v_iv_j)}\cap D_{f(v_s)}=\emptyset$.

To prove the converse, assume that $D_{f^+(v_iv_j)}\cap D_{f(v_r)}=\emptyset$ and $D_{f^+(v_iv_j)}\cap D_{f(v_s)}=\emptyset$. Now, let $f'$ be the IASI on $H$ induced by $f$. Then, $f'(w)= f^+(v_iv_j)$ and hence we have $D_{f'(w)}\cap D_{f(v_r)}=\emptyset$ and $D_{f'(w)}\cap D_{f(v_s)}=\emptyset$. Since all other edges $uv$ of $H$ are the edges of $G$ and $f$ is a strong IASI of $G$, we have $D_{f'(u)}\cap D_{f'(u)} = D_{f(u)}\cap D_{f(u)}=\emptyset$. Therefore, $f'$ is a strong IASI of $H$ induced by the strong IASI $f$ of $G$.
\end{proof}

A graph $H$ is a {\em subdivision} of a graph $G$ if $H$ can be obtained from $G$ by replacing some of its edges with some paths of length $2$ such that the inner vertices of these paths have degree $2$ in $H$. The following theorem verifies whether a subdivision of $G$ admits a strong IASI.

\begin{theorem}
A subdivision $S(G)$ of a strong IASI graph $G$ does not admit an induced strong IASI.
\end{theorem}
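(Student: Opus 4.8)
The plan is to mirror the argument used for the line graph (Theorem~\ref{T-SIASI-LG}) and to invoke the difference-set characterisation of Theorem~\ref{T-SIASI1}. First I would fix an arbitrary strong IASI $f$ on $G$ and write down the induced labelling on $S(G)$: if an edge $e=uv$ of $G$ is replaced by a path $u\,w\,v$ of length $2$, then the new inner vertex $w$ inherits the label of the edge it replaces, so that $f'(w)=f^+(uv)=f(u)+f(v)$, while every original vertex $x$ retains $f'(x)=f(x)$. The two newly created edges are $uw$ and $wv$, and the only new adjacencies to check are those of $w$.

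The crucial ingredient is the inclusion $D_A\cup D_B\subseteq D_{A+B}$ for any two finite sets $A,B$ of non-negative integers; this is the same fact that drives the line-graph result. I would record its one-line justification: if $d=|a_1-a_2|\in D_A$ and $b\in B$, then $a_1+b$ and $a_2+b$ lie in $A+B$ and differ by $d$, whence $d\in D_{A+B}$, and symmetrically for $D_B$. Applying this with $A=f(u)$ and $B=f(v)$ gives $D_{f(u)}\subseteq D_{f(u)+f(v)}=D_{f'(w)}$ and likewise $D_{f(v)}\subseteq D_{f'(w)}$.

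Now $u$ and $w$ are adjacent in $S(G)$, so for $f'$ to be a strong IASI, Theorem~\ref{T-SIASI1} demands $D_{f'(u)}\cap D_{f'(w)}=\emptyset$. But $D_{f'(u)}=D_{f(u)}\subseteq D_{f'(w)}$ forces $D_{f'(u)}\cap D_{f'(w)}=D_{f(u)}$, and the same computation applies at the edge $wv$. Hence the difference sets of the adjacent pair $u,w$ (equivalently $v,w$) fail to be disjoint as soon as $D_{f(u)}$ is non-empty, and by Theorem~\ref{T-SIASI1} the induced $f'$ is not a strong IASI. Since $f$ was an arbitrary strong IASI of $G$ and the induced labelling is determined by it, $S(G)$ admits no induced strong IASI.

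The step I expect to cause trouble is precisely the non-emptiness claim $D_{f(u)}\neq\emptyset$ at the end: the argument collapses if the subdivided edge happens to have both endpoints labelled by singletons, for then $D_{f(u)}=D_{f(v)}=\emptyset$ and the two new edges satisfy the strong condition trivially (indeed $w$ then also receives a singleton label). I would therefore either restrict the statement to strong IASI graphs in which each subdivided edge has an endpoint of set-indexing number at least $2$, or argue that such an edge can always be selected; isolating and disposing of this degenerate singleton case cleanly is the main obstacle, and it is the part of the statement that most deserves an explicit hypothesis.
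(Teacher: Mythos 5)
Your argument is essentially identical to the paper's own proof: there too the inner vertex $w$ receives the set-label $f^{+}(uv)$ of the subdivided edge, the key inclusion $D_{f(u)}\cup D_{f(v)}\subseteq D_{f^{+}(uv)}$ is invoked, and the conclusion is that $D_{f(u)}\cap D_{f'(w)}=D_{f(u)}\neq \emptyset$, violating the disjointness criterion of Theorem~\ref{T-SIASI1} on the new edge $uw$ (and symmetrically on $wv$). So on the main line you have reproduced the intended proof, including the one-line verification of the inclusion that the paper leaves implicit.

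The caveat you flag at the end is not a defect of your write-up but a genuine gap in the statement, which the paper's proof glosses over by asserting $D_{f(u)}\neq\emptyset$ without justification. Note first that with the paper's literal definition $D_A=\{|a-b|:a,b\in A\}$ every difference set contains $0$, which would make the criterion of Theorem~\ref{T-SIASI1} vacuously unsatisfiable; the working convention must therefore take differences of \emph{distinct} elements, so that singleton labels have empty difference sets. Under that convention your degenerate case really occurs: take $G=K_2$ with $f(u)=\{1\}$ and $f(v)=\{2\}$, which is a strong IASI, and subdivide the edge; the induced labels $\{1\},\{3\},\{2\}$ give edge labels $\{4\},\{5\}$, each of cardinality $1=1\cdot 1$, so the induced labelling on $S(G)$ is again a strong IASI, contradicting the unqualified theorem. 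Hence the result needs exactly the hypothesis you propose (each subdivided edge should have an endpoint of set-indexing number at least $2$), and your instinct to isolate and dispose of this case explicitly is more careful than the paper's own treatment.
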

\begin{proof}
Let the given graph $G$ admits a strong IASI $f$. Since $f$ is a strong IASI of $G$, for any two adjacent vertices $u$ and $v$ in $G$  $D_{f(u)}\cup \, D_{f(v)}\subseteq D_{f^+(uv)}$. Let $P_2:v_iv_jv_k$ be a path in $G$. Now, let $H$ be a graph obtained by replacing the edge $uv$ by a path $uwv$. Here, under an IASI, induced by $f$, the new element $w$ takes the same set-label of the removed edge $uv$. Therefore, $D_{f(u)}\cap \, D_{f(w)} = D_{f(u)}\ne \emptyset$. Similarly, $D_{f(w)}\cap \, D_{f(v)} = D_{f(v)}\ne \emptyset$. Hence, $f$ does not induce a strong IASI on $H$.
\end{proof}

\section{Nourishing Number of Graph Powers}

In this section, we discuss the nourishing number of cetain graph classes and their powers. For a positive integer $r$, the $r$-th power of a simple graph $G$ is the graph $G^r$ whose vertex set is $V$, two distinct vertices being adjacent in $G^r$ if and only if their distance in $G$ is at most $r$. The graph $ G^2 $ is referred to as the {\em square} of $ G $, the graph $ G^3 $ as the {\em cube} of G. To proceed further, we need the following theorem on graph powers.

\begin{theorem}\label{T-Gdiam}
{\rm \cite{EWW}} If $d$ is the diameter of a graph $G$, then $G^d$ is a complete graph.
\end{theorem}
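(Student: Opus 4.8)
The plan is to argue directly from the two definitions in play: the definition of the $d$-th power $G^d$ and the definition of the diameter $d$. Recall that a graph on vertex set $V$ is complete precisely when every pair of distinct vertices is adjacent. Hence it suffices to show that for an arbitrary pair of distinct vertices $u,v \in V$, the vertices $u$ and $v$ are adjacent in $G^d$.

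First I would unwind the definition of $G^d$ given just above the statement: by construction, $u$ and $v$ are adjacent in $G^d$ if and only if their distance $d_G(u,v)$ in $G$ satisfies $d_G(u,v) \le d$. Next I would invoke the definition of the diameter as $d = \max_{x,y \in V} d_G(x,y)$, so that every pairwise distance is bounded above by $d$; in particular $d_G(u,v) \le d$ for the chosen pair. Combining these two observations shows that $u$ and $v$ are adjacent in $G^d$, and since the pair $u,v$ was arbitrary, $G^d$ is complete.

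The argument is essentially immediate once the relevant definitions are laid side by side, so there is no genuine computational obstacle here. The only point requiring care is the implicit hypothesis that $G$ is connected: the diameter is finite, and the statement meaningful, precisely in that case, since otherwise some distances are infinite and $G^d$ would fail to be complete for every finite $d$. I would therefore make this connectivity assumption explicit at the outset, and then the three-line deduction above completes the proof.
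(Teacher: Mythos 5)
Your proof is correct: the paper itself states this theorem as a cited result from the literature without supplying a proof, and your argument is precisely the standard immediate deduction from the two definitions (adjacency in $G^d$ iff $d_G(u,v)\le d$, and the diameter bounding all pairwise distances). Your explicit remark that $G$ must be connected for the diameter to be finite is a sensible precaution that the paper leaves implicit.
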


Let us begin with the following result on the nourishing number of the finite powers of complete bipartite graphs.

\begin{theorem}
{\rm \cite{GS13}} The nourishing number of the $r$-th power of a complete bipartite graph is 
 \begin{equation*}
 \varkappa(K_{m,n}^r)=
 	\begin{cases}
 	2 & \text{if}~~ r=1\\
 	m+n & \text{if}~~ r\ge 2.
 	\end{cases}
 \end{equation*}
\end{theorem}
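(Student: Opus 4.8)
The plan is to reduce the entire statement to a clique-number computation via Theorem \ref{T-NN1}, which asserts that $\varkappa(G)=\omega(G)$ for every strong IASI graph $G$. Thus it suffices to determine $\omega(K_{m,n}^r)$ in the two regimes $r=1$ and $r\ge 2$, and the set-labeling content of the problem disappears once this reduction is made.

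For the case $r=1$ I would simply note that $K_{m,n}^1=K_{m,n}$ is bipartite and hence triangle-free, so its largest complete subgraph is a single edge; therefore $\omega(K_{m,n})=2$ and $\varkappa(K_{m,n})=2$. (One should record the trivial degenerate instance $m=n=1$, where $K_{1,1}=K_2$ also has clique number $2$, so the formula is consistent there.)

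For the case $r\ge 2$ the key observation is that the diameter of $K_{m,n}$ equals $2$: two vertices lying in the same part are joined by a path of length $2$ through any vertex of the opposite part, while vertices in different parts are adjacent. Invoking Theorem \ref{T-Gdiam} with $d=2$, I conclude that $K_{m,n}^{2}$ is the complete graph on its $m+n$ vertices, i.e.\ $K_{m,n}^{2}=K_{m+n}$. Since taking further powers of a graph that is already complete cannot add new adjacencies, $K_{m,n}^{r}=K_{m+n}$ for every $r\ge 2$; consequently $\omega(K_{m,n}^{r})=\omega(K_{m+n})=m+n$, and Theorem \ref{T-NN1} gives $\varkappa(K_{m,n}^{r})=m+n$, as required.

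The computation is essentially routine once the two cited theorems are in hand, so the only delicate point is the diameter argument: I would make sure the distance-$2$ claim is justified uniformly, including the star case $m=1$ (where same-part vertices on the large side are still at distance $2$ through the center), so that Theorem \ref{T-Gdiam} applies without a separate degenerate analysis. Provided that the diameter is confirmed to be exactly $2$ in all non-trivial cases, the stabilization $K_{m,n}^{r}=K_{m+n}$ for $r\ge 2$ and the final clique-number evaluation follow immediately.
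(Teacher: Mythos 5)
Your proof is correct and follows essentially the same route as the paper, which likewise notes that $\varkappa(G)=\omega(G)$ (Theorem \ref{T-NN1}) and that the result for $r\ge 2$ is immediate from the diameter of $K_{m,n}$ being $2$ together with Theorem \ref{T-Gdiam}. Your treatment is in fact slightly more careful than the paper's one-line justification, since you verify the diameter claim uniformly (including the star case) and note the stabilization $K_{m,n}^r=K_{m+n}$ for all $r\ge 2$.
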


The proof is immediate from the fact that the diameter of a complete bipartite graph is $2$.

\ni The nourishing number of a path has been estimated in \cite{GS13} as follows.
 
\begin{theorem}\label{NNPm^r}
{\rm \cite{GS13}} The nourishing number of the $r$-th power of a path $P_m$ is 
\begin{equation*}
\varkappa(P_m)=
	\begin{cases}
	r+1 & \text{if}~~ r<m\\
	m+1  & \text{if}~~ r\ge m.
	\end{cases}
\end{equation*}
\end{theorem}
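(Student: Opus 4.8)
The plan is to reduce the whole problem to a clique count. By Theorem~\ref{T-NN1} the nourishing number of any graph equals its clique number, so $\varkappa(P_m^r)=\omega(P_m^r)$ and it suffices to determine this clique number. Here I would fix the convention that $P_m$ is the path $v_0 v_1\cdots v_m$ on $m+1$ vertices, so that the distance between $v_i$ and $v_j$ is $|i-j|$ and the diameter of $P_m$ is $m$; this is exactly the convention that makes the claimed value $m+1$ in the second case come out right, so I would announce it at the very start.

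First I would record the adjacency of the power graph: by definition $v_i$ and $v_j$ are adjacent in $P_m^r$ precisely when $0<|i-j|\le r$. Consequently a set $S\subseteq\{0,1,\dots,m\}$ of vertices is a clique of $P_m^r$ if and only if all its indices lie within a window of width $r$, i.e.\ $\max(S)-\min(S)\le r$. Thus every clique is contained in a block of consecutive integers of width at most $r$, and the clique number is just the size of the largest such block that fits inside $\{0,1,\dots,m\}$. A window of width $r$ contains at most $r+1$ integers (with equality when it lies entirely in range), while the index set $\{0,\dots,m\}$ has only $m+1$ integers in total; hence $\omega(P_m^r)=\min\{r+1,\,m+1\}$.

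Finally I would split into the two cases of the statement. If $r<m$ then $r+1\le m<m+1$, so a full window such as $\{v_0,\dots,v_r\}$ realises the maximum, giving $\omega(P_m^r)=r+1$. If $r\ge m$, then every two vertices are within distance $m\le r$, so $P_m^r$ is complete on its $m+1$ vertices---equivalently, apply Theorem~\ref{T-Gdiam} with $d=m$---and $\omega(P_m^r)=m+1$. Combining with $\varkappa=\omega$ yields the piecewise formula. The argument contains no genuine obstacle; the only delicate points are the vertex-count convention for $P_m$ (which the $r\ge m$ case pins down) and the boundary value $r=m$, where the two expressions agree since $r+1=m+1$, so I would verify explicitly that both the window count and the completeness argument give the same answer there to rule out an off-by-one error.
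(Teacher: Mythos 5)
Your proposal is correct, and it follows exactly the route the paper intends: the survey states Theorem~\ref{NNPm^r} without proof (citing \cite{GS13}), but all the neighbouring power-graph results are derived from the identity $\varkappa(G)=\omega(G)$ of Theorem~\ref{T-NN1} together with diameter facts such as Theorem~\ref{T-Gdiam}, which is precisely your argument. Your window characterisation of cliques in $P_m^r$ ($\max(S)-\min(S)\le r$, giving $\omega(P_m^r)=\min\{r+1,m+1\}$) is sound, and your explicit fixing of the convention that $P_m$ has $m+1$ vertices is exactly the right move, since the case $r\ge m$ of the stated formula forces it and the boundary $r=m$ checks out under it.
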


The following result is on the nourishing number of the $r$-th power of a cycle.

\begin{theorem}\label{NNCn^r}
{\rm \cite{GS13}} The nourishing number of the $r$-th power of a cycle $C_n$ is 
\begin{equation*}
\varkappa(C_n)=
	\begin{cases}
	r+1 & \text{if}~~ r<\lfloor \frac{n}{2} \rfloor\\
	n  & \text{if}~~ r\ge \lfloor \frac{n}{2} \rfloor.
	\end{cases}
\end{equation*}
\end{theorem}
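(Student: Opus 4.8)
The plan is to reduce the nourishing number to a clique number and then compute the latter directly. By Theorem \ref{T-NN1} we have $\varkappa(C_n^r)=\omega(C_n^r)$, so it suffices to find the order of a largest clique of $C_n^r$. I label the vertices $v_0,v_1,\ldots,v_{n-1}$ cyclically, so that the distance between $v_i$ and $v_j$ in $C_n$ is $\min(|i-j|,\,n-|i-j|)$, and two vertices are adjacent in $C_n^r$ exactly when this distance is at most $r$.

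First I would settle the case $r\ge\lfloor n/2\rfloor$. The diameter of $C_n$ is $\lfloor n/2\rfloor$, so Theorem \ref{T-Gdiam} gives that $C_n^{\lfloor n/2\rfloor}$ is complete; since this graph is a spanning subgraph of $C_n^r$ whenever $r\ge\lfloor n/2\rfloor$, the graph $C_n^r$ is itself complete, and hence $\varkappa(C_n^r)=\omega(C_n^r)=n$. This gives the second line.

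Now suppose $r<\lfloor n/2\rfloor$, equivalently $n\ge 2r+2$. The lower bound is immediate: the $r+1$ consecutive vertices $v_0,v_1,\ldots,v_r$ are pairwise at distance at most $r$ (the distance from $v_0$ to $v_r$ being $r<n-r$), so they form a clique and $\omega(C_n^r)\ge r+1$. For the upper bound, let $S$ be any clique and fix $v_0\in S$. Every other vertex of $S$ is within distance $r$ of $v_0$, so it occupies either a ``low'' position $\ell\in\{1,\ldots,r\}$ (measured clockwise from $v_0$) or a ``high'' position $n-h$ with $h\in\{1,\ldots,r\}$. Let $L$ consist of $0$ together with all the low values $\ell$, and let $H$ consist of all the high values $h$; then $|S|=|L|+|H|$. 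Applying the adjacency condition to a low vertex (value $\ell$) and a high vertex (value $h$), whose cyclic distance is $\min(\ell+h,\,n-\ell-h)$, shows that $\ell+h\le r$ or $\ell+h\ge n-r$ for all $\ell\in L,\ h\in H$.

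The crux is therefore the following arithmetical lemma: if $L\subseteq\{0,1,\ldots,r\}$ and $H\subseteq\{1,\ldots,r\}$ satisfy $\ell+h\notin(r,\,n-r)$ for all $\ell\in L$ and $h\in H$, then $|L|+|H|\le r+1$. I would prove it by exhibiting an injection of the disjoint union $L\sqcup H$ into the $(r+1)$-element set $\{0,1,\ldots,r\}$, namely $\ell\mapsto\ell$ on $L$ and $h\mapsto r+1-h$ on $H$. The only way this map could fail to be injective is a collision $\ell=r+1-h$, i.e. $\ell+h=r+1$; but $n\ge 2r+2$ forces $r+1\in(r,\,n-r)$, so the hypothesis rules out $\ell+h=r+1$. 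Hence the map is injective and $|L|+|H|\le r+1$. Combined with the reduction above this yields $\omega(C_n^r)\le r+1$, and with the lower bound $\omega(C_n^r)=r+1$, i.e. $\varkappa(C_n^r)=r+1$, giving the first line. I expect the construction of this injection---the observation that the forbidden window $(r,\,n-r)$ always contains $r+1$, which is exactly what blocks a collision---to be the one genuinely clever point; everything else is bookkeeping. (A useful sanity check during the write-up: a clique need not sit inside an arc of length $r$, e.g. $\{v_0,v_2,v_3,v_5\}$ in $C_8^3$, so any upper-bound argument that tries to confine $S$ to a single short arc will fail, whereas the injection above handles such ``wrapped'' cliques uniformly.)
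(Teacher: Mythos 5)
Your proof is correct. There is in fact no in-paper argument to compare against: the paper states Theorem \ref{NNCn^r} without proof, quoting it from \cite{GS13} (listed as ``to appear''). Your route --- reducing $\varkappa$ to the clique number via Theorem \ref{T-NN1}, disposing of the case $r\ge\lfloor n/2\rfloor$ with Theorem \ref{T-Gdiam} exactly as the paper does for $K_{m,n}^r$, and then computing $\omega(C_n^r)=r+1$ when $n\ge 2r+2$ --- is precisely the argument the paper's framework intends, and the injection lemma is the one piece of genuine content you had to supply. That lemma is sound: the only possible collision $\ell+h=r+1$ lies in the forbidden window $(r,\,n-r)$ exactly because $n\ge 2r+2$, and your example $\{v_0,v_2,v_3,v_5\}$ in $C_8^3$ rightly shows that any argument confining a clique to a single arc of length $r$ would fail, so the wrap-handling injection is needed. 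Two small points for a final write-up: (i) you silently use that the low positions $\{0,1,\ldots,r\}$ and the high positions $\{n-r,\ldots,n-1\}$ are disjoint, so that $|S|=|L|+|H|$ counts without double-counting; this needs $n-r>r$, which again follows from $n\ge 2r+2$ and deserves one explicit line; (ii) the displayed formula in the statement should read $\varkappa(C_n^r)$ rather than $\varkappa(C_n)$ --- a typo in the paper that your computation implicitly corrects.
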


A {\em wheel graph}, denoted by $W_{n+1}$, is defined as  $W_{n+1}=C_n+K_1$. The nourishing number of the powers of wheel graphs is discussed in the following proposition.

\begin{theorem}
{\rm \cite{GS13}} The nourishing number of the $r$-th power of a wheel graph is 
\begin{equation*}
\varkappa(W_{n+1}^r)=
\begin{cases}
3 & \text{if}~~ r=1\\
n+1 & \text{if}~~ r \ge 2.
\end{cases}
\end{equation*}
\end{theorem}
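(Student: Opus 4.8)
The plan is to reduce both cases to results already established in the paper, namely Theorem~\ref{T-NN1}, which identifies the nourishing number with the clique number, Theorem~\ref{T-Gdiam} on diameter-th powers, and the earlier proposition that $\varkappa(K_n)=n$. The only genuine computation is the determination of the relevant clique numbers, and these follow from the simple structure of the wheel $W_{n+1}=C_n+K_1$.

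For $r=1$ I would compute $\omega(W_{n+1})$ directly. Write $W_{n+1}$ as the cycle $C_n$ on vertices $v_1,v_2,\ldots,v_n$ together with a hub vertex $u$ adjacent to every $v_i$. Any clique of $W_{n+1}$ either avoids the hub, in which case it is a clique of $C_n$ and hence has at most two vertices (since $C_n$ is triangle-free for $n\ge 4$), or it contains the hub together with a set of cycle vertices that must be pairwise adjacent in $C_n$; such a set can contain at most two vertices, namely the endpoints of a single cycle edge. A largest clique is therefore the hub together with one edge of the cycle, so $\omega(W_{n+1})=3$, and Theorem~\ref{T-NN1} gives $\varkappa(W_{n+1})=3$.

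For $r\ge 2$ the key observation is that $W_{n+1}$ has diameter $2$: the hub is at distance $1$ from every cycle vertex, and any two cycle vertices are joined by a path of length $2$ through the hub, so no pair of vertices is farther apart than $2$. By Theorem~\ref{T-Gdiam}, $W_{n+1}^2$ is the complete graph $K_{n+1}$. Since increasing the exponent only adds edges and leaves the vertex set unchanged, $W_{n+1}^r=K_{n+1}$ for every $r\ge 2$ as well. Applying the earlier proposition $\varkappa(K_{n+1})=n+1$ then yields $\varkappa(W_{n+1}^r)=n+1$ for all $r\ge 2$.

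I do not expect any serious obstacle, as each step is an immediate appeal to a prior result; the one point requiring care is the clique-number computation in the $r=1$ case, where the argument tacitly assumes $n\ge 4$, so that $C_n$ is triangle-free. For $n=3$ one has $W_4=K_4$, whose clique number is $4$, an exceptional case that should be excluded from the statement.
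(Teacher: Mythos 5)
Your proof is correct and takes essentially the same route the paper uses for results of this kind: combining Theorem~\ref{T-NN1} (nourishing number equals clique number) with Theorem~\ref{T-Gdiam} and the observation that the wheel has diameter $2$, exactly parallel to the paper's one-line justification for $\varkappa(K_{m,n}^r)$. Your caveat that $n=3$ must be excluded in the $r=1$ case (since $W_4=K_4$ gives $\varkappa(W_4)=4$) is a valid refinement of the stated result, which implicitly assumes $n\ge 4$.
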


A {\em helm graph}, denoted by $H_n$, is a graph obtained by adjoining a pendant edge to each of the vertices of the outer cycle of a wheel graph $W_{n+1}$. A helm graph has $2n+1$ vertices and $3n$ edges. The following theorem is on the nourishing number of a helm graph and its powers.

\begin{theorem}\label{T-NNHelmG}
{\rm \cite{GS13}} The nourishing number of the $r$-th power of a helm graph $H_n$ is given by
\begin{equation*}
\varkappa{H_n^r}=
\begin{cases}
3 & \textrm{if}~~~ r=1\\
n+1 & \textrm{if}~~~ r=2\\
n+4 & \textrm{if}~~~ r=3\\
2n+1 & \textrm{if}~~~ r\ge 4.
\end{cases}
\end{equation*}
\end{theorem}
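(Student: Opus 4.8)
The plan is to convert the entire statement into a sequence of clique-number computations. By Theorem \ref{T-NN1} we have $\varkappa(H_n^r)=\omega(H_n^r)$, so it suffices to find the largest complete subgraph of each power $H_n^r$. Write $u$ for the hub, $v_1,\dots,v_n$ for the rim (cycle) vertices in cyclic order, and $w_1,\dots,w_n$ for the pendant vertices, with $w_i$ adjacent only to $v_i$. The first step is to record the distance function of $H_n$ between the three types of vertices. Since $u$ is adjacent to every rim vertex, any two rim vertices lie at distance $1$ (if consecutive) or $2$ (otherwise); moreover $d(u,v_i)=1$ and $d(u,w_i)=2$; $d(v_i,w_j)=d(v_i,v_j)+1\in\{1,2,3\}$; and, because any path leaving $w_i$ must begin $w_i\,v_i$ and any path entering $w_j$ must end $v_j\,w_j$, we get $d(w_i,w_j)=d(v_i,v_j)+2\in\{3,4\}$. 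In particular the diameter of $H_n$ equals $4$.

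From these distances the extreme cases fall out quickly. For $r\ge 4$, Theorem \ref{T-Gdiam} gives $H_n^4=K_{2n+1}$, so every higher power is complete and $\omega=2n+1$. For $r=1$ the only triangles are $\{u,v_i,v_{i+1}\}$, and no vertex of $H_n$ is adjacent to all three (a rim vertex cannot touch two consecutive rim vertices once $n\ge 4$, and a pendant touches neither $u$ nor a second rim vertex), so $\omega=3$. For $r=2$ the hub becomes universal and the rim $\{v_1,\dots,v_n\}$ becomes a clique, producing the clique $\{u,v_1,\dots,v_n\}$; here a pendant $w_i$ is joined only to $u,v_{i-1},v_i,v_{i+1}$, so for $n\ge 4$ no pendant extends this clique and $\omega=n+1$.

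The crux is the case $r=3$, and this is where I expect the real work, together with the main risk of an off-by-one error, to lie. One first checks that $u$ and every rim vertex $v_i$ are now universal (the largest rim distance $d(v_i,w_j)=3$ is within range), so $\{u,v_1,\dots,v_n\}$ is again a clique of size $n+1$ sitting inside the closed neighbourhood of every vertex. The only remaining freedom is how many pendants may be adjoined: two pendants $w_i,w_j$ are adjacent in $H_n^3$ exactly when $d(w_i,w_j)=3$, i.e. when $v_i,v_j$ are consecutive on the rim, so the pendants induce a copy of $C_n$. I would then argue that every clique of $H_n^3$ splits as a sub-clique of the universal core together with a clique among the pendants, whence $\omega(H_n^3)=(n+1)+\omega(C_n)$, and close the case by evaluating $\omega(C_n)$ and verifying that no pendant lying outside a chosen consecutive block can be smuggled in. Pinning down this pendant contribution exactly is the sole source of the additive constant and the single most delicate step, so it is precisely there that the stated value must be checked with care.
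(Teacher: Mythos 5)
Your framework is exactly the one this paper uses for all of its graph-power results: reduce $\varkappa$ to the clique number via Theorem \ref{T-NN1}, use Theorem \ref{T-Gdiam} for large $r$, and compute distances by vertex type. (The paper itself gives no proof of this theorem, citing only \cite{GS13}, so your distance table is doing all the work.) Your computations for $r=1$, $r=2$ and $r\ge 4$ are correct for $n\ge 4$, and your structural claim for $r=3$ is also correct: $u$ and all rim vertices are universal in $H_n^3$, and two pendants $w_i,w_j$ are adjacent there exactly when $v_i,v_j$ are consecutive, so the pendants induce $C_n$ and every maximum clique is the universal core plus a clique of the pendant cycle, giving $\omega(H_n^3)=(n+1)+\omega(C_n)$.

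The genuine gap is that you stop precisely at the step you flag as delicate, and evaluating it does not give the stated value: $\omega(C_n)=2$ for $n\ge 4$, so your reduction yields $\omega(H_n^3)=n+3$, not $n+4$. The missing vertex cannot be recovered, since any three pendants include a pair $w_i,w_{i+2}$ (indices mod $n$) with $d(w_i,w_{i+2})=d(v_i,v_{i+2})+2=4>3$, so no three pendants are pairwise adjacent in $H_n^3$ when $n\ge 4$. The only case where $(n+1)+\omega(C_n)=n+4$ is $n=3$, where the pendant triangle makes $H_3^3=K_7$ and, not coincidentally, $n+4=2n+1=7$; but for $n=3$ the $r=1$ entry fails as well, since $W_4=K_4$ forces $\omega(H_3)=4\ne 3$. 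So your proposal, carried to completion, actually shows the $r=3$ entry of the cited statement cannot hold for any $n$ for which the other entries hold --- your instinct about an off-by-one in the pendant contribution was exactly right, but a proof attempt must resolve it (concluding $n+3$) rather than defer to the stated value; as written the argument neither proves the theorem nor records the contradiction it has uncovered.
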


Another graph we consider in this context is a  {\em friendship graph} $F_n$, which is the graph obtained by joining $n$ copies of the cycle  $C_3$ with a common vertex. It has $2n+1$ vertices and $3n$ edges. The following proposition is about the nourishing number of a friendship graph $F_n$.

\begin{theorem}
{\rm \cite{GS13}} The nourishing number of the $r$-th power of a friendship graph $F_n$ is 
\begin{equation*}
\varkappa(F_n^r)=
\begin{cases}
3 & \text{if}~~ r=1\\
2n+1 & \text{if}~~ r\ge 2
\end{cases}
\end{equation*}
\end{theorem}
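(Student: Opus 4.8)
The plan is to reduce the computation to clique numbers by invoking Theorem \ref{T-NN1}, which asserts that $\varkappa(G)=\omega(G)$ for every strong IASI graph $G$. Thus it suffices to determine $\omega(F_n^r)$ in each of the two stated ranges of $r$, and the two branches of the formula will follow once the clique structure of $F_n^r$ is understood.

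First I would settle the case $r=1$, where $F_n^1=F_n$ consists of a central vertex $v_0$ together with the pairs $\{u_i,w_i\}$, $i=1,\dots,n$, each pair forming a triangle $v_0u_iw_i$. A clique is a set of pairwise adjacent vertices; since two non-central vertices $u_i$ and $u_j$ (or $u_i$ and $w_j$) belonging to distinct triangles are non-adjacent in $F_n$, any clique can contain non-central vertices from at most one triangle. Consequently the largest clique is a single triangle, giving $\omega(F_n)=3$ and hence $\varkappa(F_n)=3$.

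For $r\ge 2$ the key observation is that $F_n$ has diameter $2$: every non-central vertex is adjacent to $v_0$, while two non-central vertices lying in different triangles are joined by the path of length $2$ through $v_0$. Applying Theorem \ref{T-Gdiam} with $d=2$ then shows that $F_n^2$ is the complete graph $K_{2n+1}$ on all $2n+1$ vertices. Since the power operation is monotone, $F_n^2\subseteq F_n^r$ for every $r\ge 2$, and as $F_n^2$ is already complete on the whole vertex set we obtain $F_n^r=K_{2n+1}$ for all $r\ge 2$. Therefore $\omega(F_n^r)=2n+1$, and Theorem \ref{T-NN1} yields $\varkappa(F_n^r)=2n+1$.

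I expect the only point requiring genuine care to be the clique analysis in the case $r=1$: one must argue that no clique can contain non-central vertices from two distinct triangles, which rests on the fact that the triangles share only the vertex $v_0$ and are otherwise mutually non-adjacent. The case $r\ge 2$ is essentially immediate once the diameter is computed. The degenerate case $n=1$, where $F_1=K_3$ already has diameter $1$, is consistent with both branches, since every power of $K_3$ is again $K_3$ with clique number $3=2\cdot 1+1$.
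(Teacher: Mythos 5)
Your proposal is correct and follows exactly the route the paper intends: the survey states this result without a written proof (citing \cite{GS13}), but its surrounding machinery --- Theorem \ref{T-NN1} reducing $\varkappa$ to the clique number $\omega$, and Theorem \ref{T-Gdiam} together with the diameter-$2$ observation making $F_n^r$ complete for $r\ge 2$, just as in the complete bipartite case --- is precisely what you invoke. Your clique analysis for $r=1$ and the check of the degenerate case $n=1$ are both sound, so nothing is missing.
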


Another similar graph structure is a {\em fan graph} $F_{m,n}$, which is defined as $\bar{K_m}+P_n$. The nourishing number of a fan graph $F_{m,n}$ and its powers are estimated as follows.

\begin{theorem}
{\rm \cite{GS13}} The nourishing number of the $r$-th power of a fan graph $F_{m,n}$ is 
\begin{equation*}
\varkappa(F_{m,n}^r)=
\begin{cases}
3 & \text{if}~~ r=1\\
m+n & \text{if}~~ r\ge 2
\end{cases}
\end{equation*}
\end{theorem}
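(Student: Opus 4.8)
The plan is to reduce the computation of $\varkappa(F_{m,n}^r)$ to that of the clique number by invoking Theorem~\ref{T-NN1}, which asserts $\varkappa(G)=\omega(G)$. It then suffices to determine $\omega(F_{m,n}^r)$ in the two regimes $r=1$ and $r\ge 2$, working directly from the structure $F_{m,n}=\bar{K_m}+P_n$, the join of an independent set on $m$ vertices with a path on $n$ vertices. Throughout I would keep the natural standing assumptions $m\ge 1$ and $n\ge 2$ under which the stated formula is meaningful, all cross-edges between $\bar{K_m}$ and $P_n$ being present by definition of the join.

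For the base case $r=1$ I would argue combinatorially. Any clique contains at most one vertex of $\bar{K_m}$, since those $m$ vertices are pairwise non-adjacent; and within $P_n$ the largest clique is a single edge, so a clique meets $P_n$ in at most two vertices. Because every vertex of $\bar{K_m}$ is joined to every vertex of $P_n$, one may take one vertex of $\bar{K_m}$ together with any edge of $P_n$ to form a triangle, and no larger clique exists. Hence $\omega(F_{m,n})=3$.

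For $r\ge 2$ I would pass through the diameter. First I would verify that $\mathrm{diam}(F_{m,n})=2$: any two vertices of $\bar{K_m}$ lie at distance $2$ via any path vertex, any two non-adjacent vertices of $P_n$ lie at distance $2$ via any vertex of $\bar{K_m}$, and every $\bar{K_m}$--$P_n$ pair is adjacent. By Theorem~\ref{T-Gdiam}, $F_{m,n}^{\,2}$ is then complete on all $m+n$ vertices. Since powers are edge-monotone, $F_{m,n}^{\,2}\subseteq F_{m,n}^{\,r}$ for every $r\ge 2$, so $F_{m,n}^{\,r}$ is complete and $\omega(F_{m,n}^{\,r})=m+n$. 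Combining with Theorem~\ref{T-NN1} yields the asserted values of $\varkappa$.

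The only genuinely delicate point is the boundary bookkeeping rather than any deep computation. The $r=1$ formula requires $n\ge 2$ so that $P_n$ contains an edge; for $n=1$ the fan degenerates to the star $K_{1,m}$ with $\omega=2$, which is why this case must be excluded. For the diameter argument underlying the $r\ge 2$ branch, one should note that $\mathrm{diam}(F_{m,n})=2$ holds exactly when a non-adjacent pair exists (i.e.\ $m\ge 2$ or $n\ge 3$); in the remaining cases $F_{m,n}$ is already complete, and the conclusion $\omega(F_{m,n}^{\,r})=m+n$ is then immediate. I expect the main obstacle to be stating these degenerate configurations precisely and dispatching them by direct inspection, so that the clean two-line formula is justified on its intended domain.
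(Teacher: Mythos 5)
Your proposal is correct and follows exactly the route the paper intends: reduce $\varkappa$ to the clique number via Theorem~\ref{T-NN1}, observe $\omega(F_{m,n})=3$ for $r=1$, and use $\mathrm{diam}(F_{m,n})=2$ with Theorem~\ref{T-Gdiam} to get a complete graph on $m+n$ vertices for $r\ge 2$ --- the same diameter argument the paper explicitly invokes for the analogous $K_{m,n}^r$ result. Your attention to the degenerate cases ($n=1$, or $F_{m,n}$ already complete) is a small but genuine improvement in precision over the paper's statement, which leaves these boundary conditions implicit.
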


A {\em split graph}, denoted by $G(K_r,S)$, is a graph in which the vertices can be partitioned into a clique $K_r$ and an independent set $S$. A split graph is said to be a {\em complete split graph} if every vertex of the independent set $S$ is adjacent to every vertex of the the clique $K_r$ and is denoted by $K_S(r,s)$, where $r$ and $s$ are the orders of $K_r$ and $S$ respectively. The nourishing number of a split graph is estimated in the following theorem.

\begin{theorem}\label{T-NNSpG}
{\rm \cite{GS13}} The nourishing number of the $r$-th power of a split graph $G=G(K_r,S)^r$ is given by
\begin{equation*}
\varkappa(G^r)=
\begin{cases}
r & \text{if no vertex of $S$ is adjacent to all vertices of $K_r$}\\
r+1 & \text{if some vertices of $S$ are adjacent to all vertices of $K_r$}\\
r+l & \text{if $r=2$}\\
r+s & \text{if $r\ge 3$};
\end{cases}
\end{equation*}
\end{theorem}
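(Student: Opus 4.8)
The plan is to reduce the whole computation to a clique-number problem by invoking Theorem~\ref{T-NN1}, which gives $\varkappa(H)=\omega(H)$ for every strong IASI graph $H$. Accordingly, I would establish each case by exhibiting a maximum clique of the relevant power of the split graph and checking that nothing larger is possible. Throughout, write $V(G)=K_r\cup S$ with $K_r$ the clique (on $r$ vertices) and $S$ the independent set (on $s$ vertices), and recall that, since $G$ has no isolated vertices, every vertex of $S$ is adjacent to at least one vertex of $K_r$.

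I would first settle the graph itself, which yields the first two values. As $S$ is independent, no clique can contain two vertices of $S$; hence a maximum clique is either $K_r$, or $K_r$ together with a single vertex of $S$ that is adjacent to all of $K_r$. Thus $\omega(G)=r$ when no vertex of $S$ is adjacent to every vertex of $K_r$, and $\omega(G)=r+1$ when some vertex of $S$ is, and Theorem~\ref{T-NN1} converts these into the claimed nourishing numbers.

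Next I would exploit the small diameter of a split graph to handle the powers. Since $K_r$ is complete, any two vertices of $S$ are linked through $K_r$ by a path of length at most $3$, so $\operatorname{diam}(G)\le 3$; moreover every vertex of $S$ lies within distance $2$ of every vertex of $K_r$. When the power is at least the diameter, Theorem~\ref{T-Gdiam} makes the power a complete graph, so its clique number equals the total number of vertices $r+s$; this gives the case of large powers. For the square, I would note that in $G^2$ every vertex of $S$ is adjacent to every vertex of $K_r$, while two vertices of $S$ are adjacent exactly when they share a common neighbour in $K_r$; the maximum clique is then $K_r$ enlarged by a largest family of vertices of $S$ that are pairwise at distance $2$ in $G$, and this surplus furnishes the intermediate value.

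The main obstacle I expect is the square: unlike the two extreme cases, $G^2$ need not be complete, so the additional contribution from $S$ is dictated by the mutual-distance structure of the independent set as seen through the clique, and identifying this contribution precisely (and matching it to the stated quantity) is the delicate point. By contrast, the graph itself follows at once from the independence of $S$, and the higher powers follow immediately from the diameter bound together with Theorem~\ref{T-Gdiam}.
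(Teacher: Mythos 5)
Your overall strategy is exactly the one this paper relies on: the survey states the theorem without proof (citing \cite{GS13}), but every result in that section is obtained by combining Theorem \ref{T-NN1} (that $\varkappa$ equals the clique number) with the diameter argument of Theorem \ref{T-Gdiam}, which is precisely your reduction. Your treatment of the first power (clique number $r$ or $r+1$ according to whether some vertex of $S$ is adjacent to all of $K_r$, since the independence of $S$ forbids two $S$-vertices in any clique) and of exponents at least $3$ (a connected split graph has diameter at most $3$, so the power is complete and the value is $r+s$) is correct, modulo one harmless imprecision: a maximum clique need not contain all of $K_r$, as it could have the form $\{v\}\cup N(v)$ with $v\in S$; but such a clique has size at most $r$ unless $v$ dominates $K_r$, so your conclusion stands.

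The one genuine gap is the square, which you flag but do not close. You correctly observe that in $G^2$ every vertex of $S$ becomes adjacent to all of $K_r$, and that two vertices of $S$ are adjacent in $G^2$ exactly when their neighbourhoods in $K_r$ intersect, so $\omega(G^2)=r+t$, where $t$ is the largest number of vertices of $S$ with pairwise intersecting neighbourhoods. In fairness, the statement itself is defective at this point: the parameter $l$ in the case ``$r+l$'' is never defined anywhere in this paper (it comes from \cite{GS13}), and the symbol $r$ is overloaded, denoting both the order of the clique $K_r$ and the exponent of the power --- the four cases must be read as exponent $1$ (two subcases), exponent $2$, and exponent $\ge 3$, as the corollary following the theorem (which uses $k$ for the power) confirms. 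The honest completion of your argument is therefore to define $l$ to be exactly your quantity $t$, i.e.\ the clique number of the intersection graph of the $S$-neighbourhoods, equivalently the maximum number of vertices of $S$ pairwise at distance $2$ in $G$; with that reading, your outline establishes all four cases, and the ``delicate point'' you identify is a defect of the statement rather than of your method.
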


In view of Theorem \ref{T-NNSpG}, we have established the following result on complete split graphs.

\begin{corollary}
The nourishing number of the $k$-th power of a complete split graph $G=K_S(r,s)$ is given by
\begin{equation*}
\varkappa(G^r)=
\begin{cases}
r+1 & \text{if $k=1$}\\
r+s & \text{if $k\ge 2$};
\end{cases}
\end{equation*}
\end{corollary}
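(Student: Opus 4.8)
The plan is to derive this directly from Theorem~\ref{T-NN1}, which identifies the nourishing number $\varkappa(G^k)$ with the clique number $\omega(G^k)$, together with the structural theorem for split graphs (Theorem~\ref{T-NNSpG}) and the diameter criterion for graph powers (Theorem~\ref{T-Gdiam}). Throughout I write $k$ for the power (so that $\varkappa(G^k)$ is the quantity in the two cases), and I exploit the defining feature of a \emph{complete} split graph $G=K_S(r,s)$: every vertex of the independent set $S$ is adjacent to every vertex of the clique $K_r$.

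For the case $k=1$ I would argue that a complete split graph is precisely a split graph in which \emph{all} vertices of $S$ are adjacent to all vertices of $K_r$, so it falls under the second branch of Theorem~\ref{T-NNSpG} and $\varkappa(G)=r+1$. For completeness I would also verify this clique number directly: the vertex set of $K_r$ is a clique of size $r$, and adjoining any single vertex of $S$ yields a clique of size $r+1$; one cannot adjoin two vertices of $S$ simultaneously, since $S$ is independent. Hence $\omega(G)=r+1$, and the first case follows from Theorem~\ref{T-NN1}.

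For the case $k\ge 2$ the key step is to compute the diameter of $G$. Any two vertices of $S$ are non-adjacent but share every vertex of $K_r$ as a common neighbour, so they lie at distance $2$; every remaining pair of vertices is already adjacent. Thus the diameter of $G$ equals $2$, and Theorem~\ref{T-Gdiam} gives that $G^2$, and therefore $G^k$ for every $k\ge 2$, is the complete graph on the full vertex set of order $r+s$. Consequently $\omega(G^k)=r+s$, and invoking Theorem~\ref{T-NN1} once more yields $\varkappa(G^k)=r+s$, completing the second case.

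I expect the only genuinely delicate point to be the $k=1$ case, where one must be careful that the presence of the independent set $S$ blocks any extension of the clique $K_r$ by more than a single vertex; the $k\ge 2$ case is essentially forced once the diameter is seen to be $2$.
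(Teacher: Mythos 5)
Your proposal is correct and takes essentially the same route as the paper, which states this corollary without an explicit proof as an immediate consequence of Theorem~\ref{T-NNSpG}: your $k=1$ case is precisely the second branch of that theorem (all vertices of $S$ adjacent to all of $K_r$), and your $k\ge 2$ case (diameter $2$, so Theorem~\ref{T-Gdiam} makes $G^k$ complete on $r+s$ vertices, whence Theorem~\ref{T-NN1} gives $\varkappa(G^k)=r+s$) is the same diameter argument the paper uses throughout this section for graph powers. Your direct verification that $\omega(G)=r+1$ when $k=1$ is a sensible addition, given that the case distinctions in Theorem~\ref{T-NNSpG} as printed conflate the clique order with the power.
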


An {\em $n$-sun} or a {\em trampoline}, denoted by $S_n$,  is a chordal graph on $2n$ vertices, where $n\ge 3$, whose vertex set can be partitioned into two sets $U = \{u_1,u_2,u_3,\ldots, u_n\}$ and $W = \{w_1,w_2,w_3,\ldots, w_n\}$ such that $W$ is an independent set of $G$ and $w_j$ is adjacent to $u_i$ if and only if $j=i$ or $j=i+1~(mod ~ n)$. {\em A complete sun} is a sun $G$ where the induced subgraph $\langle U \rangle$ is complete. The sun graphs with $\langle U \rangle$ is a cycle is one of the most interesting structures among the sun graphs.  The following theorem determines the nourishing number of a sun graph $G$ whose non-independent set of vertices induces a cycle in the graph $G$.

\begin{theorem}\label{T-NNSunG}
{\rm \cite{GS13}} If $G$ is an $n$-sun graph with $\langle U \rangle = C_n$, then the nourishing number of $G^r$ is given by
\begin{equation*}
\varkappa(G^r)=
\begin{cases}
2r+1 & \text{if}~~~ r<\lfloor \frac{n}{2}\rfloor\\
2(n-1) & \text{if}~~~ r=\lfloor \frac{n}{2}\rfloor ~\text{and if~ $n$ ~ is odd}\\
2n-1 & \text{if}~~~ r=\lfloor \frac{n}{2}\rfloor ~\text{and if~ $n$ ~ is even}\\
2n & \text{if}~~~ r\ge \lfloor \frac{n}{2}\rfloor +1;
\end{cases}
\end{equation*}
\end{theorem}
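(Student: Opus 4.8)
The plan is to route everything through the clique number. By Theorem \ref{T-NN1} we have $\varkappa(G^r)=\omega(G^r)$, so it suffices to determine the order of a largest clique of $G^r$; and since two vertices are adjacent in $G^r$ exactly when their distance in $G$ is at most $r$, a clique of $G^r$ is precisely a set of vertices that are pairwise within distance $r$ in $G$. I would therefore begin by recording the three distance formulas in the sun graph $G$ with $\langle U\rangle=C_n$: for cycle vertices $d(u_i,u_j)=\min(|i-j|,\,n-|i-j|)$; for a pendant and a cycle vertex $d(w_j,u_i)=1+\min\{d(u_{j-1},u_i),d(u_j,u_i)\}$; and for two pendants $d(w_j,w_k)=2+\min\{d(u_a,u_b):a\in\{j-1,j\},\ b\in\{k-1,k\}\}$. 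The latter two hold because $W$ is independent and each $w_j$ meets the cycle only in $u_{j-1}$ and $u_j$, so any path between pendants must pass through $U$.

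From these formulas one reads off that the diameter of $G$ is $\lfloor n/2\rfloor+1$, realised by a suitable pair of antipodal pendants. Hence, by Theorem \ref{T-Gdiam}, $G^r=K_{2n}$ as soon as $r\ge\lfloor n/2\rfloor+1$, which gives $\omega(G^r)=2n$ and disposes of the last case immediately.

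For $r<\lfloor n/2\rfloor$ I would exhibit an explicit extremal clique and match it with an upper bound. Take the $r+1$ consecutive cycle vertices $u_0,u_1,\dots,u_r$ together with the $r$ pendants $w_1,w_2,\dots,w_r$ hanging on the interior edges $u_{j-1}u_j$; the distance formulas show every pair of these is within distance $r$, so this is a clique of order $2r+1$. For the reverse inequality I would argue that, because $r<\lfloor n/2\rfloor$, a clique cannot ``wrap around'' the cycle, so after recording for each chosen vertex its position on $C_n$ (a vertex for a $u$, an edge for a $w$) the whole clique is confined to a cyclic arc spanning at most $r$ edges; such an arc carries at most $r+1$ cycle vertices and $r$ interior pendants, and the two boundary edges cannot contribute further pendants without exceeding distance $r$ from the opposite end. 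This yields $\omega(G^r)=2r+1$.

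The genuinely delicate case, and the one I expect to be the main obstacle, is $r=\lfloor n/2\rfloor$. Here $G^r$ is almost complete: it fails to be $K_{2n}$ only because of the pairs still left at the maximal distance $\lfloor n/2\rfloor+1$, so a largest clique is the whole vertex set minus a smallest set of vertices meeting every such obstructing pair, i.e.\ $2n$ minus a minimum vertex cover of the non-adjacency graph of $G^{\lfloor n/2\rfloor}$. The real work is to determine these obstructing pairs exactly and then to compute how few vertices destroy all of them, and this is precisely where the parity of $n$ intervenes: the set of antipodal pendant--pendant pairs, and (for odd $n$) the antipodal pendant--cycle pairs, have genuinely different incidence patterns according as $n$ is even or odd. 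I would treat the two parities separately, describing the obstruction graph explicitly in each case, reading off its minimum vertex cover, and then confirming optimality by displaying a clique of the asserted order, thereby producing the two values $2n-1$ for $n$ even and $2(n-1)$ for $n$ odd. Carrying out this incidence bookkeeping carefully, so that no obstructing pair is overlooked, is the step on which the whole boundary case turns.
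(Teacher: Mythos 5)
Your overall framework is sound and matches the methodology this survey visibly relies on (note the paper itself gives no proof of Theorem~\ref{T-NNSunG}; it is quoted from \cite{GS13}, ``to appear'', so there is no in-paper argument to compare against beyond the pattern used for the other power results). Your reduction $\varkappa(G^r)=\omega(G^r)$ via Theorem~\ref{T-NN1}, the identification of cliques in $G^r$ with sets of pairwise distance at most $r$, the three distance formulas, the diameter computation $\operatorname{diam}(G)=\lfloor n/2\rfloor+1$ feeding Theorem~\ref{T-Gdiam} for the case $r\ge\lfloor n/2\rfloor+1$, and the extremal clique $\{u_0,\dots,u_r\}\cup\{w_1,\dots,w_r\}$ with the matching $2r+1$ upper bound for $r<\lfloor n/2\rfloor$ are all correct (the ``arc'' bookkeeping in the upper bound is loose but repairable: pendant indices must pairwise satisfy $\mathrm{dist}(k-j)\le r-1$, so at most $r$ pendants, and at most $r+1$ cycle vertices fit in an arc of length $r$).

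The genuine gap is the case $r=\lfloor n/2\rfloor$, which you correctly flag as the crux but then leave as a plan rather than a proof --- and the plan, when actually executed, does not produce the asserted values. Carry out your own bookkeeping: for even $n=2m$, the only pairs at distance $m+1$ in $G$ are the antipodal pendant pairs $\{w_j,w_{j+m}\}$, since $d(w_j,w_{j+m})=2+(m-1)=m+1$; these form a perfect matching on $W$ consisting of $m$ disjoint obstructing pairs, so $\omega(G^m)=2n-m=\tfrac{3n}{2}$, not $2n-1$. For odd $n=2m+1$ one finds $w_j\not\sim w_{j+m}$, $w_j\not\sim w_{j+m+1}$ and $w_j\not\sim u_{j+m}$ in $G^m$, so the obstruction graph is an $n$-cycle on $W$ together with a perfect $W$--$U$ matching; its maximum independent set has size exactly $n$, giving $\omega(G^m)=n$, not $2(n-1)$. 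A concrete check: for $n=5$, $r=2$, the sets $\{w_1,w_3\}$, $\{w_1,w_4\}$, $\{w_0,u_2\}$, etc., are all non-adjacent in $G^2$ and force $\omega(G^2)=5\ne 8$; even simpler, for $n=3$, $r=1$ the graph $G$ itself has $\omega(G)=3\ne 4=2(n-1)$. So your final step, ``confirming optimality by displaying a clique of the asserted order,'' cannot be carried out: no clique of order $2n-1$ (resp.\ $2(n-1)$) exists. The failure point in your sketch is the implicit assumption that $G^{\lfloor n/2\rfloor}$ misses completeness by only one or two vertices' worth of obstructions; in fact there are $\Theta(n)$ maximal-distance pairs and their minimum vertex cover is linear in $n$. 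What your (correct) method actually reveals is that the middle cases of the quoted statement are inconsistent with Theorem~\ref{T-NN1} as the paper states it, so either those values are misstated in the source or the boundary case requires a notion of $\varkappa$ that differs from the clique number.
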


\begin{theorem}\label{T-NNKSunG}
{\rm \cite{GS13}} The nourishing number of a complete $n$-sun graph $G$ is given by
\begin{equation*}
\varkappa(G^r)=
\begin{cases}
n & \text{if}~~~ r=1 ~\text{and if}~~~ \langle U \rangle \text{is triangle-free}\\
n+1 & \text{if}~~~ r=2\\
2n & \text{if}~~~ r\ge 3;
\end{cases}
\end{equation*}
\end{theorem}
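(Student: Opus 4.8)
The plan is to convert everything into a clique-number computation. By Theorem~\ref{T-NN1} the nourishing number of any strong IASI graph equals its clique number, so it suffices to evaluate $\omega(G^r)$ for a complete $n$-sun $G$ in each range of $r$. I would first fix the structure: $U=\{u_1,\dots,u_n\}$ induces a $K_n$, $W=\{w_1,\dots,w_n\}$ is independent, and each $w_j$ is adjacent only to $u_{j-1}$ and $u_j$ (indices mod $n$). Reading distances off this picture, any two $u$'s lie at distance $1$; $d(u_i,w_j)=1$ if $i\in\{j-1,j\}$ and $2$ otherwise (routed through any common $u$); and for $i\ne j$, $d(w_i,w_j)=2$ when $w_i,w_j$ are consecutive, so that $\{u_{i-1},u_i\}$ and $\{u_{j-1},u_j\}$ meet, and $3$ otherwise. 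Hence the diameter of $G$ is $3$, dropping to $2$ when $n=3$ (where every pair of $w$'s is consecutive).

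The two extreme ranges are then short. For $r=1$ the only cliques meeting $W$ are the triangles $\{w_j,u_{j-1},u_j\}$, since each $w_j$ has but two neighbours; as $\langle U\rangle=K_n$ already furnishes a clique of size $n\ge 3$, we get $\omega(G)=n$, the first listed value. For $r\ge 3$ I would invoke Theorem~\ref{T-Gdiam}: because the diameter is at most $3$, the power $G^{3}$ is complete, and since $G^{r}\supseteq G^{3}$ for $r\ge 3$, every such power equals $K_{2n}$; therefore $\omega(G^r)=2n$, the last listed value.

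The substance of the proof lies in the case $r=2$, and this is where I expect the real difficulty. In $G^{2}$ the distance data show that every $u_i$ becomes adjacent to every $w_j$ (all those distances are $\le 2$), while two distinct $w$'s are adjacent exactly when they were consecutive in $G$; thus the subgraph induced on $W$ in $G^2$ is precisely the consecutivity cycle on $n$ vertices. A maximum clique of $G^2$ is accordingly the full set $U$ together with a maximum clique of this $W$-cycle, so the whole problem collapses to determining $\omega\!\left(\langle W\rangle_{G^2}\right)$ and adding $n$. The main obstacle is to carry out this count rigorously — to show that no clique can combine all of $U$ with more than the admissible number of mutually consecutive $w$-vertices, and that dropping a $u$ never lets one recover enough $w$'s to improve the bound — together with treating the small-$n$ boundary separately, e.g.\ $n=3$, where the consecutivity cycle degenerates to a triangle and $G^2$ is already complete. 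Once this intermediate clique number is pinned down, Theorem~\ref{T-NN1} turns the three clique counts back into the stated nourishing numbers and the proof is finished.
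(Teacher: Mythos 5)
Your reduction via Theorem~\ref{T-NN1} and your distance computations are sound, and the cases $r=1$ and $r\ge 3$ go through exactly as you say (note that the paper itself contains no proof of Theorem~\ref{T-NNKSunG} --- it is quoted from \cite{GS13} --- so for those cases there is nothing to check beyond your use of Theorem~\ref{T-Gdiam}). But the case you yourself identify as the substance of the proof, $r=2$, is not actually proved, and worse, your own setup shows that the outlined plan cannot terminate at the claimed value. By your (correct) distance analysis, in $G^2$ every $w_j$ is adjacent to every $u_i$, and $w_i$ is adjacent to $w_{i+1}$ since they share the neighbour $u_i$. Hence $U\cup\{w_i,w_{i+1}\}$ is a clique of size $n+2$ in $G^2$ for every $n\ge 4$: a maximum clique of the consecutivity cycle on $W$ is an edge, so your recipe ``$n$ plus the clique number of the $W$-cycle'' evaluates to $n+2$, not to the stated $n+1$. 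You cannot defer this as the ``main obstacle'' to be carried out later, because the obvious completion of your own argument contradicts the target; either you must exhibit a reason why some pair in $U\cup\{w_i,w_{i+1}\}$ fails to be adjacent in $G^2$ (there is none), or you must confront the fact that the printed formula is not recoverable by this route.

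It is also worth flagging that the statement itself is internally suspect, which a careful blind proof attempt should surface rather than smooth over: for a complete sun, $\langle U\rangle = K_n$ with $n\ge 3$ is never triangle-free, so the condition attached to the $r=1$ case is vacuous and appears to have been carried over from the non-complete sun result, Theorem~\ref{T-NNSunG}. You silently read that case as plain $r=1$, which is the only sensible reading, but your $r=2$ computation suggests the $n+1$ entry may be a similar transcription artifact. The honest summary is: your $r=1$ and $r\ge 3$ arguments are complete and correct, while your $r=2$ case is an unproven plan whose natural completion yields $n+2$ and therefore fails to establish the theorem as stated.
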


Another important graph is an {\em $n$-sunlet graph}, which is the graph on $2n$ vertices obtained by attaching one pendant edge to each vertex of a cycle $C_n$. The following result discusses about the nourishing number of an $n$-sunlet graph. 

\begin{theorem}
{\rm \cite{GS13}} The nourishing number of the $r$-th power of an $n$-sunlet graph $G$ is
\begin{equation*}
\varkappa(G^r)=
	\begin{cases}
	2r & \text{if}~~ r<\lfloor \frac{n}{2} \rfloor +1\\
	2(n-1) & \text{if}~~ r= \lfloor \frac{n}{2} \rfloor+1 ~\text{and ~$n$~ is odd}\\ 
	2n-1 & \text{if}~~ r= \lfloor \frac{n}{2} \rfloor+1 ~\text{and ~$n$~ is even}\\
	2n & \text{if}~~ r\ge \lfloor \frac{n}{2} \rfloor+2.
	\end{cases}
\end{equation*} 
\end{theorem}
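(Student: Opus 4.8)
The plan is to convert the statement into a question about clique numbers and then to analyse the clique structure of the sunlet graph power directly. By Theorem \ref{T-NN1}, $\varkappa(G^r)=\omega(G^r)$, so it suffices to determine the clique number of $G^r$. To that end I would first fix coordinates: write the cycle vertices as $c_1,c_2,\ldots,c_n$ and the pendant vertices as $p_1,\ldots,p_n$ with $p_i$ adjacent to $c_i$, and let $d_C(i,j)=\min(|i-j|,\,n-|i-j|)$ denote cyclic distance. A short computation records the three distance types in $G$, namely $d(c_i,c_j)=d_C(i,j)$, $d(c_i,p_j)=d_C(i,j)+1$ and $d(p_i,p_j)=d_C(i,j)+2$ for $i\neq j$; in particular the diameter of $G$ is $\lfloor n/2\rfloor+2$. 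The last case $r\ge \lfloor n/2\rfloor+2$ is then immediate: since $r$ is at least the diameter, Theorem \ref{T-Gdiam} gives $G^r=K_{2n}$ and hence $\omega(G^r)=2n$.

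For the remaining cases I would describe a clique of $G^r$ by a pair $(C,P)$, where $C\subseteq\{1,\ldots,n\}$ is the set of indices of its cycle vertices and $P$ the set of indices of its pendant vertices. The adjacency rules above translate the clique condition into three pairwise cyclic-distance budgets: $d_C\le r$ within $C$, $d_C\le r-1$ between $C$ and $P$, and $d_C\le r-2$ within $P$. Thus $\omega(G^r)$ equals the maximum of $|C|+|P|$ over all such admissible configurations, and the whole proof reduces to solving this packing problem on the cycle as $r$ varies; the cycle-power clique numbers of Theorem \ref{NNCn^r} are the natural input.

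For $r<\lfloor n/2\rfloor$, so that $C_n^{\,r}$ has not yet become complete, cliques of the cycle powers are arcs, and hence $C$ sits inside an arc of at most $r+1$ consecutive positions while $P$ sits inside an arc of at most $r-1$. After linearising I would write $C$ and $P$ as integer intervals $[c_1,c_2]$ and $[p_1,p_2]$; adding the two extreme cross-constraints $c_2-p_1\le r-1$ and $p_2-c_1\le r-1$ yields $(c_2-c_1)+(p_2-p_1)\le 2r-2$, whence $|C|+|P|\le 2r$. The window $C=\{1,\ldots,r+1\}$, $P=\{2,\ldots,r\}$ realises this bound, giving $\omega(G^r)=2r$.

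The delicate part, and the step I expect to be the main obstacle, is the boundary regime where $r$ reaches $\lfloor n/2\rfloor$ (the endpoint still claimed to give $2r$) and $r=\lfloor n/2\rfloor+1$, because there the arc description collapses. Once $C_n^{\,r}$ is complete the set $C$ may wrap all the way around the cycle, and the only surviving non-adjacencies arise from antipodal pairs, so the optimisation is no longer a clean interval problem. Here one must count how many pendants can be appended to a full (or nearly full) cycle clique while avoiding the antipodal conflicts; the conflict pattern on the pendants is a perfect matching when $n$ is even and a near-perfect (odd) pattern when $n$ is odd, which is precisely what forces the split into the $2(n-1)$ and $2n-1$ values. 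I would handle this by identifying the antipodality conflict graph on the pendant indices explicitly, computing its clique/independence structure, and cross-checking the resulting counts against small instances such as $n=5$ and $n=6$ before committing to the closed form — this parity-dependent bookkeeping is where the real care, and the decisive case distinctions, will lie.
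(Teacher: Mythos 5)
Your reduction to clique numbers via Theorem \ref{T-NN1} and the disposal of the case $r\ge \lfloor n/2\rfloor+2$ via the diameter computation and Theorem \ref{T-Gdiam} are fine, and for what it is worth the paper itself offers no proof to compare against --- the statement is quoted from an unpublished source, so your proposal must stand on its own. It does not: the entire content of the theorem lives in exactly the cases you defer. For $r<\lfloor n/2\rfloor$ your interval argument gives the bound $2r$ (modulo the fact that your premise ``cliques of cycle powers are arcs'' is false --- $\{v_0,v_2,v_4\}$ is a non-arc maximum clique of $C_6^2$ --- so the linearisation $C=[c_1,c_2]$, $P=[p_1,p_2]$ needs an actual justification, e.g.\ by first using a nonempty $P$ to trap $C$ inside an intersection of short arcs). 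But the boundary cases $r=\lfloor n/2\rfloor$ and $r=\lfloor n/2\rfloor+1$, which produce the parity-dependent values, are left as a plan (``I would handle this by identifying the antipodality conflict graph\dots''), with no counts carried out. A proposal that stops exactly where the case distinctions in the statement begin is not a proof of the statement.

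Worse, if you execute your own plan, the cross-checks you propose refute the printed closed form rather than confirm it. At $r=\lfloor n/2\rfloor$ with $n$ odd (a value the first case claims gives $2r$), the $n$ cycle vertices are pairwise within distance $\lfloor n/2\rfloor$, so they form a clique of size $n=2r+1>2r$ in $G^r$; e.g.\ $n=5$, $r=2$ gives $\omega(G^2)\ge 5>4$. At $r=\lfloor n/2\rfloor+1$ your distance budgets ($d_C\le r$ within $C$, $\le r-1$ across, $\le r-2$ within $P$) give, for $n=5$, $r=3$: all $5$ cycle vertices plus at most $2$ pendants (pendants must satisfy $d_C\le 1$), so $\omega(G^3)=7\neq 8=2(n-1)$; and for $n=6$, $r=4$: $G^4$ is $K_{12}$ minus the three antipodal pendant pairs, so $\omega(G^4)=9\neq 11=2n-1$. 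So, granting Theorem \ref{T-NN1} as the paper does, the statement as printed cannot be recovered by your (correct) clique computation; the honest conclusion of your approach is a corrected case list, not the quoted one. Concretely: your method is the right one, but the ``delicate bookkeeping'' you postpone is not merely delicate --- it is where the claimed formula breaks, and no amount of care in the conflict-graph analysis will land on the stated values.
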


\section{Strongly Uniform IASI Graphs}

\begin{definition}{\rm
{\rm \cite{GS2}} A graph $G$ is said to have a strongly uniform IASI if there exists an IASI $f$ defined on $G$ such that $|f^+(uv)|=k=|f(u)|\,|f(v)|$ for all $u,v \in V(G)$, where $k$ is a positive integer. }
\end{definition}

In view of the above definition, we can notice that if a graph $G$ admits a strongly $k$-uniform IASI, then every edge of $G$ has the set-indexing number which is the product of the set-indexing numbers of its end vertices. Hence, if $G$ is a strongly $k$-uniform IASI graph, then each vertex of $G$  has some set-indexing number $d_i$, which is a divisor of $k$. Hence, $V(G)$ can be partitioned into at most $n$ sets, say $(X_1, X_2, \ldots, X_n)$ such that each $X_i$ consists of the vertices of $G$ having the set-indexing number $d_i$, where $n$ is the number of divisors of the integer $k$.

The admissibility of strongly uniform IASIs by bipartite graphs had been established in the following theorem.

\begin{theorem}\label{T-SIASI-BG}
{\rm \cite{GS2}} For any positive integer $k$, all bipartite graphs admit a strongly $k$-uniform IASI.
\end{theorem}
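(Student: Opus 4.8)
The plan is to give a direct, explicit construction, exploiting the fact that a bipartite graph imposes no strong-IASI constraints among vertices lying on the same side of its bipartition; by Theorem \ref{T-SIASI1} the only relevant constraints are those across the two colour classes. I would fix a bipartition $(X,Y)$ of $G$ (available for any bipartite graph, disconnected ones included, by $2$-colouring each component), and write $X=\{x_1,\dots,x_s\}$ and $Y=\{y_1,\dots,y_t\}$. The key simplification is to read the target as the factorisation $k=1\cdot k$: I will label one colour class entirely with singletons and the other entirely with $k$-element sets, so that every edge automatically acquires set-indexing number $k=1\cdot k=|f(u)|\,|f(v)|$, and $k$ can be any positive integer.

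Concretely, I would choose an integer $N>\max\{s,k\}$ and set $f(x_i)=\{i\}$ for $1\le i\le s$ and $f(y_j)=\{jN,\,jN+1,\dots,jN+k-1\}$ for $1\le j\le t$. First I would verify that this is a genuine set-labelling: the singletons are pairwise distinct, the length-$k$ blocks are pairwise distinct since they begin at the distinct values $jN$, and no singleton can equal a block (by cardinality when $k\ge 2$, and by the bound $jN\ge N>s\ge i$ when $k=1$). Next I would check the strong $k$-uniform property: because $f(x_i)$ is a singleton, its difference set $D_{f(x_i)}$ contains no nonzero element and is therefore trivially disjoint from $D_{f(y_j)}$, so Theorem \ref{T-SIASI1} certifies that the labelling is a strong IASI; moreover each edge satisfies $f^+(x_iy_j)=\{\,i+jN,\,i+jN+1,\dots,i+jN+k-1\,\}$, an interval of exactly $k$ integers, whence $|f^+(x_iy_j)|=k=|f(x_i)|\,|f(y_j)|$.

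The one genuinely non-trivial point — and the step I expect to be the main obstacle — is confirming that the induced edge-map $f^+$ is itself injective, which is required for $f$ to qualify as an IASI rather than merely as a strong set-labelling. Here the mixed-radix choice $N>s$ does the work: the edge $x_iy_j$ is carried to the length-$k$ integer interval beginning at $i+jN$, so two edges share a label only if $i+jN=i'+j'N$; since $1\le i,i'\le s<N$, the quantity $i+jN$ determines the pair $(i,j)$ uniquely, giving injectivity of $f^+$. I would close by observing that the construction uses nothing about $k$ beyond $k\ge 1$, so it produces a strongly $k$-uniform IASI for every positive integer $k$ and every bipartite graph $G$, which completes the proof.
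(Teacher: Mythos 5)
Your construction is correct and is essentially the same proof as in the cited source \cite{GS2} (the present paper only states the theorem without proof): label one colour class with distinct singletons and the other with $k$-element sets, realising $k=1\cdot k$ on every edge, with the choice $N>s$ giving injectivity of $f^+$ via the mixed-radix argument. One cosmetic caveat: under the paper's literal definition $0\in D_A$ for every non-empty $A$, so the appeal to Theorem \ref{T-SIASI1} must be read with difference sets of \emph{nonzero} differences — but your direct computation $|f^+(x_iy_j)|=k$ renders that appeal unnecessary anyway.
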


The following theorem has established a condition required for a complete graph to admit a strongly $k$-uniform IASI.

\begin{theorem}\label{TCG}
{\rm \cite{GS2}} A strongly $k$- uniform IASI of a complete graph $K_n$  is a $(k,l)$-completely uniform IASI, where $l=\sqrt{k}$.
\end{theorem}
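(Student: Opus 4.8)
The plan is to exploit the defining property of $K_n$, namely that every pair of distinct vertices is adjacent, so that the strong $k$-uniformity equation $|f^+(uv)|=k=|f(u)|\,|f(v)|$ applies to \emph{every} pair of set-labels rather than just to a prescribed edge set. Since $f$ is given to be strongly $k$-uniform, we already have $|f^+(uv)|=k$ for every edge, so the IASI is automatically $k$-uniform; the entire content of the theorem therefore reduces to showing that $V(K_n)$ is $l$-uniformly set-indexed with $l=\sqrt{k}$, which will upgrade the $k$-uniform IASI to a $(k,l)$-completely uniform one.

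First I would record the basic observation: for any two distinct vertices $v_i,v_j$ of $K_n$, adjacency together with strong $k$-uniformity yields $|f(v_i)|\,|f(v_j)|=k$. Because the vertex set-labels are non-empty, every cardinality $|f(v_i)|$ is a positive integer, so these products may be freely cancelled. Next, assuming $n\ge 3$, I would fix one vertex $v_1$ and compare it against any two other vertices $v_i,v_j$: from $|f(v_1)|\,|f(v_i)|=k=|f(v_1)|\,|f(v_j)|$ we cancel $|f(v_1)|$ to conclude $|f(v_i)|=|f(v_j)|$. Hence all vertices other than $v_1$ carry one common set-indexing number, say $b$.

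The final step is a short consistency computation. Applying the product relation to two of these equal-cardinality vertices gives $b\cdot b=k$, so $b=\sqrt{k}$; and then applying it to $v_1$ together with any such vertex gives $|f(v_1)|=k/b=\sqrt{k}=b$. Thus every vertex of $K_n$ has set-indexing number $\sqrt{k}$, so $V(K_n)$ is $\sqrt{k}$-uniformly set-indexed, and the strongly $k$-uniform IASI is $(k,l)$-completely uniform with $l=\sqrt{k}$, as claimed.

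The argument is essentially forced by the complete adjacency structure, so there is no deep obstacle; the main point to handle carefully is the three-vertex comparison, which requires $n\ge 3$ (for $K_2$ an unbalanced factorisation $|f(u)|\,|f(v)|=k$ with distinct cardinalities is possible). A worthwhile remark to include is that the relation $l=\sqrt{k}$ forces $k$ to be a perfect square, so that a strongly $k$-uniform IASI can exist on $K_n$ only when $k$ is a square, the vertices each being labelled by a set of cardinality $\sqrt{k}$.
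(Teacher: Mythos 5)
Your proof is correct and is essentially the standard argument behind this result: the paper itself only cites \cite{GS2} without reproducing a proof, and the intended reasoning there is exactly your cancellation via a third vertex, forcing all set-indexing numbers to equal $\sqrt{k}$. Your explicit caveat that the comparison needs $n\ge 3$ (since on $K_2$ an unbalanced factorisation $|f(u)|\,|f(v)|=k$ is possible) is a genuine refinement the stated theorem glosses over.
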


Now, recall the following fact from the number theory. The number of divisors of a non-square integer is even and the number of divisors of a  perfect square is odd. Then, we have

\begin{theorem}\label{T-USIASI1}
{\rm \cite{GS2}} For a positive integer $k$, let $G$ be a graph which admits a strongly $k$-uniform IASI. Also, let $n$ be the number of divisors of $k$. Then, if $k$ is not a perfect square, then $G$ has at most  $\frac{n}{2}$ bipartite components and if  $k$ is a perfect square, then $G$ has at most  of  $\frac{n+1}{2}$ components in which at most $\frac{n-1}{2}$ components are bipartite components.
\end {theorem}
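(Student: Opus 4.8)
The plan is to convert the cardinality identity into a divisibility constraint on the set-indexing numbers and then read off the component structure of $G$ from the way these numbers pair up as complementary divisors of $k$. First I would record the observation already made before the statement: every vertex $v$ carries a set-indexing number $\iota(v):=|f(v)|$ that divides $k$, and the defining equality $|f(u)|\,|f(v)|=k$ forces $\iota(u)\,\iota(v)=k$ on every edge $uv$. Thus $\iota$ is a homomorphism from $G$ into the graph $H_k$ on the set of divisors of $k$ in which $d$ is joined to $d'$ exactly when $dd'=k$. As $d\mapsto k/d$ is an involution of the divisor set, $H_k$ is the disjoint union of the edges $\{d,k/d\}$ with $d\neq k/d$ together with a single loop at $\sqrt k$ precisely when $k$ is a perfect square; this is where the recalled fact on the parity of the number of divisors enters.

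Next I would establish the structural core. In a connected component $C$, fixing a vertex $v_0$ with $\iota(v_0)=d$, every neighbour has index $k/d$, every vertex at distance two has index $d$, and so on; by induction along paths the only indices occurring in $C$ are $d$ and $k/d$, distributed according to the parity of the distance from $v_0$. Hence, when $d\neq k/d$, the two index-classes furnish a bipartition of $C$, so $C$ is bipartite; a component can fail to be bipartite only if it lies wholly over a divisor $d$ with $d=k/d$, i.e. $d=\sqrt k$ --- the same mechanism that forces $l=\sqrt k$ in Theorem \ref{TCG}, now applied to an odd cycle inside $C$ rather than to a complete graph. Each component is therefore tagged by a single unordered divisor pair $\{d,k/d\}$, it is forced to be bipartite whenever the tag is a proper pair, and only the self-paired value $\sqrt k$ (available solely when $k$ is a perfect square) can host a non-bipartite component.

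It then remains to count the tags. If $k$ is not a perfect square every pair is proper, there are exactly $n/2$ of them and every component is bipartite, giving the first bound; if $k$ is a perfect square there are $(n-1)/2$ proper pairs together with the self-paired value $\sqrt k$, so $(n+1)/2$ tags in all, of which only the last can carry a non-bipartite block, leaving at most $(n-1)/2$ bipartite ones. The step I expect to be the main obstacle is the passage from counting divisor pairs to counting components: the strong-IASI condition constrains only adjacent vertices (Theorem \ref{T-SIASI1}), so nothing in it a priori prevents two distinct components from sharing the same proper tag $\{d,k/d\}$. To make the stated numbers exact I would therefore have to fix the sense in which components are enumerated --- grouping all vertices and edges lying over a common tag into a single block, so that each proper pair contributes one bipartite block and $\sqrt k$ contributes the one possibly non-bipartite block --- and verify that this grouping is the one intended. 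Establishing this one-tag-one-block principle, and reconciling it with the literal reading of the word ``component,'' is where I would concentrate the real effort, the divisor-pairing and bipartiteness arguments above being otherwise routine.
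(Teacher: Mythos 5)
Your structural argument is exactly the one the paper intends: the paper itself offers no proof of this theorem (it is a survey quoting \cite{GS2}), but the paragraph immediately preceding the statement records your first step verbatim --- every vertex's set-indexing number divides $k$, and $V(G)$ is partitioned into classes $X_{d_1},\ldots,X_{d_n}$ indexed by the divisors of $k$ --- and the intended argument is your divisor-pairing count: edges only join $X_d$ to $X_{k/d}$, each block over a proper pair $\{d,k/d\}$ is bipartite by your distance-parity induction, only the self-paired divisor $\sqrt{k}$ (available precisely when $k$ is a perfect square, by the recalled parity fact) can host an odd cycle, and the counts $\frac{n}{2}$, $\frac{n+1}{2}$, $\frac{n-1}{2}$ fall out. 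This is also the same mechanism the paper uses for Theorem \ref{TCG}, as you note.

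The obstacle you isolate at the end is genuine, and you are right not to wave it away: under the literal reading of ``component'' the statement is false. Take $k=6$ (so $n=4$ and $\frac{n}{2}=2$) and let $G$ be three disjoint edges, each with one endpoint labeled by a singleton and the other by a $6$-element set; every such sumset automatically has cardinality $6=1\cdot 6$, injectivity of the vertex and edge labelings is easily arranged, and $G$ is then a strongly $6$-uniform IASI graph with three bipartite components. Since the strong condition constrains only adjacent vertices (Theorem \ref{T-SIASI1}), nothing bounds the number of components sharing one tag $\{d,k/d\}$; indeed Theorem \ref{T-USIASI2}, which permits a union of arbitrarily many disjoint bipartite components for non-square $k$, confirms the literal count cannot be meant. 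The statement becomes true exactly under your ``one-tag-one-block'' reading, counting the maximal subgraphs induced by $X_d\cup X_{k/d}$ rather than literal connected components, and the paper's preceding partition remark indicates this is the intended interpretation. So your proof is complete for the theorem as intended, and the residue you identified is a defect of the statement rather than of your argument. One further nit in the same spirit: for square $k$ the block over $\sqrt{k}$ may itself happen to be bipartite, so ``at most $\frac{n-1}{2}$ bipartite'' must likewise be read as counting the blocks over proper divisor pairs.
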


Invoking Theorem \ref{T-USIASI1}, we note that if the vertex set of a graph $G$ admitting a strongly $k$-uniform IASI, can be partitioned into more than two sets, then $G$ is a disconnected graph. Hence, we hve a more generalised result as follows.

\begin{theorem}\label{T-USIASI2}
{\rm \cite{GS2}} Let $k$ be a non-square integer. Then a graph $G$ admits a strongly k-uniform IASI if and only if $G$ is bipartite or a union of disjoint bipartite components.
\end{theorem}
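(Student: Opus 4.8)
The plan is to establish both implications separately, with the converse being a one-line consequence of an earlier result and the forward direction carrying the real content. Throughout I fix a positive non-square integer $k$ and write $a_v := |f(v)|$ for the set-indexing number of a vertex $v$ under a candidate IASI $f$; the defining condition of a strongly $k$-uniform IASI then reads $a_u\,a_v = k$ for every edge $uv$.

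For the converse, suppose $G$ is bipartite or a disjoint union of bipartite components. In either case $G$ contains no odd cycle and is therefore bipartite as a whole. By Theorem \ref{T-SIASI-BG}, every bipartite graph admits a strongly $k'$-uniform IASI for each positive integer $k'$; applying this with $k' = k$ gives the desired strongly $k$-uniform IASI. This direction requires no use of the non-square hypothesis.

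For the forward direction I argue by contraposition: assuming $G$ is not bipartite, I show it admits no strongly $k$-uniform IASI when $k$ is non-square. Suppose toward a contradiction that $f$ is such an IASI and that some component of $G$ contains an odd cycle $C:\,v_1 v_2 \cdots v_{2t+1} v_1$. Consecutive edges give $a_{v_i} a_{v_{i+1}} = k = a_{v_{i+1}} a_{v_{i+2}}$, and since each cardinality is a positive integer I may cancel $a_{v_{i+1}}$ to conclude $a_{v_i} = a_{v_{i+2}}$ for every $i$ (indices read cyclically around $C$). Tracing this equality through the odd-indexed vertices collapses $a_{v_1} = a_{v_3} = \cdots = a_{v_{2t+1}}$ to a single common value; because the cycle has odd length, the wrap-around edge $v_{2t+1} v_1$ joins two vertices of this same class, so $a_{v_1}\cdot a_{v_1} = k$, i.e. $k = a_{v_1}^{2}$. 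This forces $k$ to be a perfect square, contradicting the hypothesis. Hence $G$ has no odd cycle and is bipartite, completing the contrapositive.

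The main obstacle is the parity bookkeeping on the odd cycle: one must verify precisely that the alternating equalities $a_{v_i} = a_{v_{i+2}}$ force the two endpoints of the single wrap-around edge into the same cardinality class, which happens exactly because the cycle length is odd. The same reasoning applied to an even cycle yields only $a_{v_1} a_{v_2} = k$ with no square constraint, which explains structurally why bipartite graphs escape the obstruction and is the mirror image of the converse above. As an alternative one could invoke Theorem \ref{TCG} applied to a triangle, but that route only settles the case in which the offending odd cycle is a $K_3$, so the self-contained cycle computation is preferable for handling graphs of arbitrary odd girth.
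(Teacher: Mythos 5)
Your proof is correct, and the forward direction takes a genuinely different route from the paper's. The paper obtains this theorem as a consequence of Theorem \ref{T-USIASI1}, i.e.\ from the global divisor partition: $V(G)$ is split into classes $X_i$ of vertices whose set-indexing number is a divisor $d_i$ of $k$, any edge must join a vertex of class $d$ to one of class $k/d$, and non-squareness of $k$ guarantees $d\ne k/d$, so each component sits bipartitely between a complementary pair of divisor classes. You replace this global partition with a local parity computation on an odd cycle: cancellation along consecutive edges forces the wrap-around edge to satisfy $a_{v_1}^2=k$, contradicting non-squareness. Your cancellation step is legitimate because the strong condition $|f(u)|\,|f(v)|=k\ge 1$ forces all vertex set-labels to be non-empty, and your remark that ``bipartite or a union of disjoint bipartite components'' collapses to ``bipartite'' is a fair simplification of the statement. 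What each approach buys: your cycle argument is self-contained, handles arbitrary odd girth directly (as you note, invoking Theorem \ref{TCG} on a triangle would not suffice), and isolates the exact numerical obstruction; the paper's divisor-partition argument yields strictly more structural information as a by-product --- the bound of at most $n/2$ bipartite components in Theorem \ref{T-USIASI1} and the perfect-square conclusions of Theorems \ref{M5} and \ref{TNB} essentially for free. Both treatments of the converse coincide, resting on Theorem \ref{T-SIASI-BG}.
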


\ni The following results are immediate consequences of the above results.

\begin{theorem}\label{M4}
{\rm \cite{GS2}} If a graph $G$, which admits a strongly $k$-uniform IASI, then it contains at most one component which is a clique.
\end{theorem}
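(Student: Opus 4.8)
The plan is to split the argument according to whether $k$ is a perfect square, since Theorem \ref{TCG} shows that this dichotomy is exactly what controls which components can be complete. First I would record the elementary observation that is the engine of the whole proof: a clique component of order at least three contains a triangle and is therefore not bipartite. This is the feature I intend to exploit, because the preceding theorems all bound the number of non-bipartite components, and it is also why the statement should be read as referring to clique components of order $\ge 3$ (a union of disjoint edges is a counterexample if $K_2$ were allowed to count, since each $K_2$ is bipartite and several can coexist under one strongly $k$-uniform IASI).

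Next I would use the hereditary property (Theorem \ref{T-SIASI-SG}): if $K_m$ is a complete component of $G$, then the restriction of the strongly $k$-uniform IASI $f$ to $K_m$ is again a strongly $k$-uniform IASI of $K_m$. For $m \ge 3$, Theorem \ref{TCG} then forces $f$ to be $(k,l)$-completely uniform on $K_m$ with $l=\sqrt{k}$, so every vertex of the clique carries set-indexing number $\sqrt{k}$ and, in particular, $k$ must be a perfect square. Consequently, if $k$ is not a perfect square then $G$ has no clique component at all; equivalently, by Theorem \ref{T-USIASI2}, $G$ is a disjoint union of bipartite graphs, none of which is a clique of order $\ge 3$, and the conclusion holds vacuously.

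It then remains to treat the case where $k$ is a perfect square. Here I would invoke Theorem \ref{T-USIASI1} directly: writing $n$ for the number of divisors of $k$, the graph $G$ has at most $\frac{n+1}{2}$ components, of which at most $\frac{n-1}{2}$ are bipartite. Subtracting, at most $\frac{n+1}{2}-\frac{n-1}{2}=1$ component of $G$ can fail to be bipartite. Since every clique component is non-bipartite by the first step, $G$ contains at most one component that is a clique, as claimed.

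The part I expect to carry the real weight is the counting in Theorem \ref{T-USIASI1}, which I am treating as given. Its mechanism is the divisor-pairing behind the vertex partition $(X_1,\ldots,X_n)$: a vertex whose set-indexing number is a divisor $d\neq\sqrt{k}$ can only be adjacent to a vertex with set-indexing number $k/d$, which confines the corresponding components to a bipartite shape, whereas the unique self-paired divisor $\sqrt{k}$ (present exactly when $k$ is a perfect square) is the only one that permits adjacencies within a single class and hence a complete, non-bipartite block. Because there is exactly one such distinguished divisor, there is room for at most one non-bipartite, and thus at most one clique, component, which is precisely the obstacle the theorem is built to resolve.
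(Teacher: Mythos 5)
Your proposal follows essentially the same route as the paper, which gives no separate proof of Theorem \ref{M4} but presents it as an immediate consequence of Theorems \ref{T-USIASI1} and \ref{T-USIASI2}: your case split on whether $k$ is a perfect square, the use of Theorem \ref{TCG} together with the hereditary property (Theorem \ref{T-SIASI-SG}) to rule out clique components of order at least $3$ when $k$ is non-square, and the component count from Theorem \ref{T-USIASI1} when $k$ is a perfect square reproduce exactly the intended derivation, and your observation that $K_2$ components must be excluded from the count is a sensible sharpening that the paper leaves implicit. One caution on your middle step: the bare subtraction of the two upper bounds in Theorem \ref{T-USIASI1} (at most $\frac{n+1}{2}$ components total, at most $\frac{n-1}{2}$ bipartite) does not by itself bound the number of non-bipartite components, since both counts could simultaneously sit below their maxima --- e.g.\ nothing in those two inequalities alone forbids three non-bipartite components when $n=5$; what actually licenses the conclusion is the divisor-pairing mechanism you articulate in your final paragraph, namely that every non-bipartite component is confined to the unique self-paired divisor class $d=\sqrt{k}$, which is precisely the reading of Theorem \ref{T-USIASI1} on which the paper itself relies, so your proof is sound provided that paragraph, rather than the arithmetic subtraction, is taken as the operative justification.
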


\begin{theorem}\label{M5}
Let the graph $G$ has a strongly $k$-uniform IASI. Hence, if $G$ has a component which is a clique, then $k$ is a perfect square.
\end{theorem}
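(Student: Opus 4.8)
The plan is to localize the problem to the clique component and then quote Theorem~\ref{TCG}, which already contains all of the arithmetic content. Let $f$ be a strongly $k$-uniform IASI of $G$ and suppose some component $C$ of $G$ is a clique, say $C\cong K_n$. Since $C$ is an entire component, every edge of $C$ is an edge of $G$, so for each pair of adjacent vertices $u,v$ in $C$ the defining relation $|f^+(uv)|=k=|f(u)|\,|f(v)|$ continues to hold. Thus the restriction of $f$ to $V(C)$ is itself a strongly $k$-uniform IASI of the complete graph $K_n$; this is just the hereditary property of strong IASIs from Theorem~\ref{T-SIASI-SG}, specialized to the uniform setting, where $k$-uniformity is inherited because edge set-indexing numbers are unchanged under passing to an induced subgraph.

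First I would invoke Theorem~\ref{TCG} on this restricted labeling. It asserts that a strongly $k$-uniform IASI of $K_n$ is $(k,l)$-completely uniform with $l=\sqrt{k}$; in particular every vertex of $C$ receives the common set-indexing number $l=\sqrt{k}$. Since a set-indexing number is, by definition, the cardinality of a finite set of non-negative integers, $l$ must be a positive integer, and therefore $k=l^2$ is a perfect square, as required.

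An equivalent route, which bypasses Theorem~\ref{TCG}, is to argue by contraposition using Theorem~\ref{T-USIASI2}: if $k$ were a non-square integer, then every component of the strongly $k$-uniform IASI graph $G$ would be forced to be bipartite, contradicting the presence of a clique component that contains a triangle. I expect no substantive obstacle in either approach, since the quantitative work is entirely packaged inside Theorem~\ref{TCG} (respectively Theorem~\ref{T-USIASI2}), and the result is in effect a corollary of them. The one point deserving a word of care is the degenerate case $n=2$: a clique component $K_2$ is bipartite and admits a strongly $k$-uniform IASI for every $k$ (assign its end-vertices the set-indexing numbers $1$ and $k$), so the statement is genuinely about clique components on at least three vertices, and the implicit reading of ``clique'' here is $K_n$ with $n\ge 3$.
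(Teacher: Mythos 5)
Your proof is correct and follows essentially the paper's own route: the paper offers no separate argument for this theorem, presenting it as an immediate consequence of Theorem~\ref{TCG} (equivalently, by contraposition, of Theorem~\ref{T-USIASI2}), which are exactly the two derivations you give. Your closing caveat is a genuine sharpening the paper glosses over: a $K_2$ component is a clique yet bipartite and admits a strongly $k$-uniform IASI for any $k$, so the statement is only valid when the clique component has at least three vertices, as you correctly note.
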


\ni Hence, we have 

\begin{theorem}\label{TNB}
{\rm \cite{GS2}} A connected non-bipartite graph $G$ admits a strongly $k$-uniform IASI if and only if $k$ is a perfect square and this IASI is a $(k,l)$-completely uniform, where $l=\sqrt{k}$.
\end{theorem}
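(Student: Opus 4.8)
The plan is to prove the two implications separately, with essentially all of the work concentrated in the forward direction. Assume first that the connected non-bipartite graph $G$ admits a strongly $k$-uniform IASI $f$. The first step is to force $k$ to be a perfect square, and I would do this by contraposition using Theorem \ref{T-USIASI2}: if $k$ were not a perfect square, then every graph carrying a strongly $k$-uniform IASI would have to be bipartite or a disjoint union of bipartite components. Since $G$ is connected, it would then be bipartite outright, contradicting the hypothesis. Hence $k=l^2$ for some positive integer $l=\sqrt{k}$.

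The second step is to upgrade $f$ to a $(k,l)$-completely uniform IASI, that is, to show every vertex carries set-indexing number exactly $l$. Because $G$ is non-bipartite it contains an odd cycle $v_1 v_2 \cdots v_{2m+1} v_1$. Writing $a_i=|f(v_i)|$, strong $k$-uniformity gives $a_i a_{i+1}=k$ for every consecutive pair, with the closing edge giving $a_{2m+1} a_1=k$. Comparing successive relations $a_i a_{i+1}=a_{i+1} a_{i+2}$ (and using that no $a_i$ vanishes) yields $a_i=a_{i+2}$, so the cardinalities split into two alternating values along the cycle. The oddness of the cycle is exactly what collapses these two values: the closure $a_{2m+1}=a_1$ combined with $a_{2m+1} a_1=k$ forces $a_1^2=k$, hence $a_1=\sqrt{k}=l$, and then $a_2=k/a_1=l$ as well, so every vertex on the odd cycle has cardinality $l$. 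To finish the forward direction I would propagate this value using connectivity: any vertex $w$ is joined to the odd cycle by a path, and walking along it, each new vertex $x$ is adjacent to a predecessor already known to have cardinality $l$, whence $|f(x)|\cdot l=k=l^2$ gives $|f(x)|=l$. Thus $V(G)$ is $l$-uniformly set-indexed and $f$ is $(k,l)$-completely uniform.

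The converse is immediate and I would dispatch it in a line: if $k$ is a perfect square and $f$ is $(k,l)$-completely uniform with $l=\sqrt{k}$, then for every edge $uv$ we have $|f(u)|\,|f(v)|=l^2=k=|f^+(uv)|$, so $f$ is automatically a strongly $k$-uniform IASI. The only delicate point I anticipate is handling the closure of the odd cycle correctly, namely the step in which $a_{2m+1}=a_1$ produces $a_1^2=k$ rather than merely the alternating relation; this is precisely where the non-bipartite (odd-cycle) hypothesis is used, and everything else is routine propagation of cardinalities along edges via connectivity.
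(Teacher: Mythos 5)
Your proof is correct, and it is more self-contained than what the paper offers: the review derives Theorem~\ref{TNB} as the endpoint of a chain of cited results (``Hence, we have'' following Theorems~\ref{T-USIASI2}, \ref{M4} and \ref{M5}, with the vertex-uniformity conclusion $l=\sqrt{k}$ traced back to the complete-graph case in Theorem~\ref{TCG}), whereas you supply the underlying computation explicitly via the odd cycle. One observation worth making: your appeal to Theorem~\ref{T-USIASI2} in the first step is actually redundant. Since $\gcd(2,2m+1)=1$, your relation $a_i=a_{i+2}$ around the odd cycle, together with the closing edge giving $a_1^2=k$, already forces $k$ to be the square of the positive integer $a_1$ --- so the odd-cycle argument proves the perfect-square claim and the uniformity claim simultaneously, and your proof would stand even without the earlier theorem. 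This is a genuine gain in economy: the paper's route needs the component-counting machinery of Theorem~\ref{T-USIASI1} behind Theorem~\ref{T-USIASI2}, while yours needs only the existence of an odd cycle, non-emptiness of set-labels (so that the cardinalities $a_i$ are positive and cancellation in $a_ia_{i+1}=a_{i+1}a_{i+2}$ is legitimate), and connectivity for the propagation $|f(x)|\cdot l = l^2$. Your one-line converse is also right, since $(k,l)$-complete uniformity gives $|f^+(uv)|=k$ and $|f(u)|\,|f(v)|=l^2=k$ on every edge; note it uses neither connectivity nor non-bipartiteness, which is as it should be.
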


From all of our discussions aboout the strongly uniform IASI graphs, we can establish a generalised result as follows.

\begin{theorem}
{\rm \cite{GS2}} A connected graph $G$ admits a strongly $k$-uniform IASI if and only if $G$ is bipartite or this IASI is a $(k,l)$-completely uniform IASI of $G$, where $k=l^2$.
\end{theorem}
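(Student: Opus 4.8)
The plan is to prove the biconditional by a clean dichotomy on whether the connected graph $G$ is bipartite, assembling the two cornerstone results already established: Theorem \ref{T-SIASI-BG} handles the bipartite case, and Theorem \ref{TNB} handles the connected non-bipartite case. The point of the statement is that these two theorems, taken together, exhaust every connected graph, and the present result merely packages that exhaustion into a single criterion.

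For the necessity direction, I would assume that $G$ is connected and admits a strongly $k$-uniform IASI $f$, and then split on bipartiteness. If $G$ is bipartite, the first alternative of the conclusion holds at once and nothing more is needed. If $G$ is not bipartite, then $G$ is a connected non-bipartite graph carrying a strongly $k$-uniform IASI, so Theorem \ref{TNB} applies verbatim: it forces $k$ to be a perfect square, say $k=l^2$ with $l=\sqrt{k}$, and forces $f$ to be a $(k,l)$-completely uniform IASI. This is precisely the second alternative, so the forward implication is complete.

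For the sufficiency direction, I would treat the two alternatives separately. If $G$ is bipartite, Theorem \ref{T-SIASI-BG} guarantees a strongly $k$-uniform IASI for every positive integer $k$, so the left-hand side holds. If instead the prescribed IASI is $(k,l)$-completely uniform with $k=l^2$, then unwinding the definition shows that $V(G)$ is $l$-uniformly set-indexed and every edge has set-indexing number $k=l^2=|f(u)|\,|f(v)|$ for each adjacent pair $u,v$; this is exactly the defining condition of a strongly $k$-uniform IASI, so again the left-hand side holds.

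Because essentially all of the genuine content is absorbed into Theorem \ref{TNB}, which itself rests on the completely-uniform structure of cliques (Theorem \ref{TCG}) and the parity-of-divisors argument behind Theorem \ref{T-USIASI1}, the present statement is an organizational synthesis rather than a new computation. The only points deserving care are confirming that the case split on bipartiteness is genuinely exhaustive for connected graphs, and checking in the sufficiency step that the $(k,l)$-completely uniform condition \emph{implies} strong $k$-uniformity rather than merely coexisting with it; once the definitions are expanded both are immediate, so I do not expect a substantive obstacle here.
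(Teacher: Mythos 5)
Your proposal is correct and follows essentially the same route the paper intends: the theorem is presented there as a direct synthesis of Theorem \ref{T-SIASI-BG} (the bipartite case) and Theorem \ref{TNB} (the connected non-bipartite case), which is exactly your dichotomy. Your extra check that a $(k,l)$-completely uniform IASI with $k=l^2$ is automatically strongly $k$-uniform, since $|f^+(uv)|=k=l^2=|f(u)|\,|f(v)|$, is the right way to close the sufficiency direction and matches the definitional unwinding implicit in the paper.
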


\section{Conclusion}
In this paper, we have given some characteristics of the graphs which admit strong IASIs and strongly $k$-uniform IASIs. More studies are posssible in this area when the ground set $X$ is finite instead of the set $\mathbb{N}_0$.

The problems of establishing the necessary and sufficient conditions for various graphs and graph classes to admit these types of uniform IASIs have still been unsettled. The existence of strong IASI, both uniform and non-uniform, for certain other graph products is also woth-studying.

Certain IASIs which assign set-labels having specific properties, to the elements of given graphs also seem to be promising for further investigations. All these facts highlight the wide scope for intensive studies in this area.

\end{document}